\newtheorem{thm}[subsection]{Theorem}
\newtheorem{cor}[subsection]{Corollary}
\newtheorem{lem}[subsection]{Lemma}
\newtheorem{prop}[subsection]{Proposition}
\theoremstyle{definition}
\newtheorem{conj}[subsection]{Conjecture}
\newtheorem{defn}[subsection]{Definition}
\newtheorem{rem}[subsection]{Remark}
\DeclareSymbolFont{largesymbols}{OMX}{yhex}{m}{n}
\DeclareMathAccent{\widetilde}{\mathord}{largesymbols}{"65}
\newcommand{\thmref}[1]{Theorem~\ref{#1}}
\newcommand{\secref}[1]{\S~\ref{#1}}
\newcommand{\lemref}[1]{Lemma~\ref{#1}}
\newcommand{\propref}[1]{Proposition~\ref{#1}}
\newcommand{\conjref}[1]{Conjecture~\ref{#1}}
\newcommand{\remref}[1]{Remark~\ref{#1}}
\newcommand{\nc}{\newcommand}
\nc{\renc}{\renewcommand}
\nc{\ssec}{\subsection}
\nc{\sssec}{\subsubsection} 
\nc\ol{\overline}
\nc\wt{\widetilde}
\nc\wh{\widehat}
\nc\tboxtimes{\wt{\boxtimes}}
\renc{\d}{{\delta}}
\nc{\Aa}{{\mathbb{A}}}
\nc{\Bb}{{\mathbb{B}}}
 \nc{\Gg}{{\mathbb{G}}}  
\nc{\Hh}{{\mathbb{H}}}
 \nc{\Nn}{{\mathbb{N}}}
\nc{\Pp}{{\mathbb{P}}}
\nc{\Rr}{{\mathbb{R}}}
\nc{\BV}{{\mathbb{V}}}
\nc{\BW}{{\mathbb{W}}}
\nc{\Zz}{{\mathbb{Z}}}
\nc{\Qq}{{\mathbb{Q}}}
\nc{\Ss}{{\mathbb{S}}}
\nc{\Cc}{{\mathbb{C}}}
\nc{\Ff}{{\mathbb{F}}}
 \nc{\EL}{{L_\infty}}
\nc{\CA}{{\mathcal{A}}}
\nc{\CB}{{\mathcal{B}}}
\nc{\CE}{{\mathcal{E}}}
\nc{\CF}{{\mathcal{F}}}
\nc{\Las}{\mathsf{Las}}
\nc{\CG}{{\mathcal{G}}}
\nc{\CL}{{\mathcal{L}}}
\nc{\CC}{{\mathcal{C}}}
\nc{\CM}{{\mathcal{M}}}
\nc{\CN}{{\mathcal{N}}}
\nc{\Oog}{{\mathbb{O}}}
\nc{\Oo}{{\mathcal{O}}}
\nc{\CP}{{\mathcal{P}}}
\nc{\CQ}{{\mathcal{Q}}}
\nc{\CR}{{\mathcal{R}}}
\nc{\CS}{{\mathcal{S}}}
\nc{\CT}{{\mathcal{T}}}
\nc{\CU}{{\mathcal{P}}}
\nc{\CV}{{\mathcal{V}}}
\nc{\CW}{{\mathcal{W}}}
\nc{\CZ}{{\mathcal{Z}}}
\nc{\cM}{{\check{\mathcal M}}{}}
\nc{\csM}{{\check{\mathcal A}}{}}
\nc{\oM}{{\overset{\circ}{\mathcal M}}{}}
\nc{\obM}{{\overset{\circ}{\mathbf M}}{}}
\nc{\oCA}{{\overset{\circ}{\mathcal A}}{}}
\nc{\obA}{{\overset{\circ}{\mathbf A}}{}}
\nc{\ooM}{{\overset{\circ}{M}}{}}
\nc{\osM}{{\overset{\circ}{\mathsf M}}{}}
\nc{\vM}{{\overset{\bullet}{\mathcal M}}{}}
\nc{\nM}{{\underset{\bullet}{\mathcal M}}{}}
\nc{\oD}{{\overset{\circ}{\mathcal D}}{}}
\nc{\obD}{{\overset{\circ}{\mathbf D}}{}}
\nc{\oA}{{\overset{\circ}{\mathbb A}}{}}
\nc{\op}{{\overset{\bullet}{\mathbf p}}{}}
\nc{\cp}{{\overset{\circ}{\mathbf p}}{}}
\nc{\oU}{{\overset{\bullet}{\mathcal U}}{}}
\nc{\oZ}{{\overset{\circ}{\mathcal Z}}{}}
\nc{\ofZ}{{\overset{\circ}{\mathfrak Z}}{}}
\nc{\oF}{{\overset{\circ}{\fF}}}
\nc{\fa}{{\mathfrak{a}}}
\nc{\fb}{{\mathfrak{b}}}
\nc{\fg}{{\mathfrak{g}}}
\nc{\fgt}{{\fg}_!}
\nc{\fgl}{{\mathfrak{gl}}}
\nc{\fh}{{\mathfrak{h}}}
\nc{\fj}{{\mathfrak{j}}}
\nc{\fm}{{\mathfrak{m}}}
\nc{\ft}{{\mathfrak{t}}}
\nc{\fn}{{\mathfrak{n}}}
\nc{\fu}{{\mathfrak{u}}}
\nc{\fp}{{\mathfrak{p}}}
\nc{\fr}{{\mathfrak{r}}}
\nc{\fs}{{\mathfrak{s}}}
\nc{\fsl}{{\mathfrak{sl}}}
\nc{\hsl}{{\widehat{\mathfrak{sl}}}}
\nc{\hgl}{{\widehat{\mathfrak{gl}}}}
\nc{\hg}{{\widehat{\mathfrak{g}}}}
\nc{\chg}{{\widehat{\mathfrak{g}}}{}^\vee}
\nc{\hn}{{\widehat{\mathfrak{n}}}}
\nc{\chn}{{\widehat{\mathfrak{n}}}{}^\vee}
\nc{\fA}{{\mathfrak{A}}}
\nc{\fB}{{\mathfrak{B}}}
\nc{\fD}{{\mathfrak{D}}}
\nc{\fE}{{\mathfrak{E}}}
\nc{\fF}{{\mathfrak{F}}}
\nc{\fG}{{\mathfrak{G}}}
\nc{\fK}{{\mathfrak{K}}}
\nc{\fL}{{\mathfrak{L}}}
\nc{\fM}{{\mathfrak{M}}}
\nc{\fN}{{\mathfrak{N}}}
\nc{\fP}{{\mathfrak{P}}}
\nc{\fU}{{\mathfrak{U}}}
\nc{\fV}{{\mathfrak{V}}}
\nc{\fZ}{{\mathfrak{Z}}}
\nc{\bb}{{\mathbf{b}}}
\nc{\bc}{{\mathbf{c}}}
\nc{\bd}{\partial}
\nc{\be}{{\mathbf{e}}}
\nc{\bj}{{\mathbf{j}}}
\nc{\bn}{{\mathbf{n}}}
\nc{\bp}{{\mathbf{p}}}
\nc{\bq}{{\mathbf{q}}}
\nc{\bF}{{\mathbf{F}}}
\nc{\bu}{{\mathbf{u}}}
\nc{\bv}{{\mathbf{v}}}
\nc{\bx}{{\mathbf{x}}}
\nc{\bs}{{\mathbf{s}}}
\nc{\by}{{\bar{y}}}
\nc{\bw}{{\mathbf{w}}}
\nc{\bA}{{\mathbf{A}}}
\nc{\bK}{{\mathbf{K}}}
\nc{\bI}{{\mathbf{I}}}
\nc{\bB}{{\mathbf{B}}}
\nc{\bG}{{\mathbf{G}}}
\nc{\bC}{{\mathbf{C}}}
\nc{\bD}{{\mathbf{D}}}
\nc{\bP}{{\mathbf{P}}}
\nc{\bH}{{\mathbf{H}}}
\nc{\bM}{{\mathbf{M}}}
\nc{\bN}{{\mathbf{N}}}
\nc{\bV}{{\mathbf{V}}}
\nc{\bU}{{\mathbf{U}}}
\nc{\bL}{{\mathbf{L}}}
\nc{\bT}{{\mathbf{T}}}
\nc{\bW}{{\mathbf{W}}}
\nc{\bX}{{\mathbf{X}}}
\nc{\bY}{{\mathbf{Y}}}
\nc{\bZ}{{\mathbf{Z}}}
\nc{\bS}{{\mathbf{S}}}
\nc{\bSi}{{\bar{\Sigma}}}
\nc{\sA}{{\mathsf{A}}}
\nc{\sB}{{\mathsf{B}}}
\nc{\sC}{{\mathsf{C}}}
\nc{\sD}{{\mathsf{D}}}
\nc{\sF}{{\mathsf{F}}}
\nc{\sG}{{\mathsf{G}}}
\nc{\sK}{{\mathsf{K}}}
\nc{\sM}{{\mathsf{M}}}
\nc{\sO}{{\mathsf{O}}}
\nc{\sQ}{{\mathsf{Q}}}
\nc{\sP}{{\mathsf{P}}}
\nc{\sZ}{{\mathsf{Z}}}
\nc{\sfp}{{\mathsf{p}}}
\nc{\sr}{{\mathsf{r}}}
\nc{\sg}{{\mathsf{g}}}
\nc{\sff}{{\mathsf{f}}}
\nc{\sfb}{{\mathsf{b}}}
\nc{\sfc}{{\mathsf{c}}}
\nc{\sd}{{\ltimes}} 
\nc{\tH}{{\widetilde{H}}}
\nc{\tA}{{\widetilde{\mathbf{A}}}}
\nc{\tB}{{\widetilde{\mathcal{B}}}}
\nc{\tg}{{\widetilde{\mathfrak{g}}}}
\nc{\tG}{{\widetilde{G}}}
\def\T{{\mathsf{T}}}
\def\Tt{\T}
\nc{\TM}{{\widetilde{\mathbb{M}}}{}}
\nc{\tO}{{\widetilde{\mathsf{O}}}{}}
\nc{\tU}{\widetilde{U}}
\nc{\TZ}{{\tilde{Z}}}
\nc{\tx}{{\tilde{x}}}
\nc{\tq}{{\tilde{q}}}
\nc{\tfP}{{\widetilde{\mathfrak{P}}}{}}
\nc{\tz}{{\tilde{\zeta}}}
\nc{\tmu}{{\tilde{\mu}}}
 \def\e{\epsilon}
  \nc{\vol}{{\mathop{\operatorname{\rm vol\,}}}}
  \nc{\gal}{{\mathop{\operatorname{\rm Gal\,}}}}
  \nc{\cl}{{\mathop{\operatorname{\rm cl}}}}
  \nc{\disc}{{\mathop{\operatorname{\rm disc}}}}
  \nc{\Sym}{{\mathop{\operatorname{\rm Sym}}}}
   \nc{\Aut}{{\mathop{\operatorname{\rm Aut}}}}
 \nc{\Spec}{{\mathop{\operatorname{\rm Spec}}}}
  \nc{\spec}{{\mathop{\operatorname{\rm Spec}}}}
\nc{\Ker}{{\mathop{\operatorname{\rm Ker}}}}
 \nc{\dom}{{\mathop{\operatorname{\rm dom}}}}
\nc{\End}{{\mathop{\operatorname{\rm End}}}}
 \nc{\Hom}{\operatorname{\Hom}}
 \nc{\GL}{{\mathop{\operatorname{\rm GL}}}}
 \nc{\Id}{{\mathop{\operatorname{\rm Id}}}}
 \nc{\rk}{{\mathop{\operatorname{\rm rk}}}}
 \nc{\length}{{\mathop{\operatorname{\rm length}}}}
\nc{\supp}{{\mathop{\operatorname{\rm supp} \, }}}
\nc{\val}{{\rm val}}
\nc{\res}{{\mathop{\operatorname{\rm res}}}}
\def\Ind#1#2#3{{#1} {\downarrow}_{#3} {#2} }
\def\meet{\cap}
\def\union{\cup}
\def\si{\sigma}
\def\Sum{\Sigma}
\def\m{\smallsetminus}
\nc{\seq}[1]{\stackrel{#1}{\sim}}
\def\inv{^{-1}}
\def\beq#1{\begin{equation} \label{ #1}}
\def\eeq{\end{equation}}
\def\prf{\begin{proof}}
\def\pv{\end{proof} }
 \def\eprf{\end{proof} }
\def\acl{\mathop{\rm acl}\nolimits}
 \def\lbl#1{ { \color{green}{[#1]}}  \label{#1}  }
 \def\lbl#1{     \label{#1}  }
\def\a{\alpha}
 \renc{\b}{{\beta}}
 \def\hZ{{\widehat{\Zz}}}
\def\Ind#1#2{#1\setbox0=\hbox{$#1x$}\kern\wd0\hbox to 0pt{\hss$#1\mid$\hss}
\lower.9\ht0\hbox to 0pt{\hss$#1\smile$\hss}\kern\wd0}
 \def\Lam{\Lambda}
 \def\om{\omega}
 \def\club{\clubsuit}    \def\spade{\spadesuit}
 \def\uA{\underline{A}} 
\def\lam{\lambda}  
  \def\PFp{\mathsf{PF}_+}  
  \def\SP{\mathsf{SP}}
 \def\Pfp{\PFp}
 \def\Pfpw{\mathsf{PF}_{\widehat{+}}}
   \def\PPp{\mathsf{PP}_+}
   \def\Ppp{\PPp}
\title{Ax's theorem with an additive character}
\author{Ehud Hrushovski }
\begin{document}

\begin{abstract}   Motivated by  Emmanuel Kowalski's  exponential sums
 over definable sets in finite fields, we
 generalize Ax's theorem on pseudo-finite fields  to a continuous-logic setting allowing for 
  an additive character.    
   The role played by Weil's Riemann hypothesis for curves over finite fields is taken 
by the `Weil bound' on exponential sums.   
     Subsequent model-theoretic developments, including simplicity and the Chatzidakis-Van den Dries-Macintyre definable measures,  also generalize.

    Analytically, we have the  following consequence:   consider 
      the algebra of functions $\Ff_p^n \to \Cc$ obtained from the  additive characters  and the characteristic functions of 
      subvarieties   by pre-  or post-composing with polynomials,  
     applying min and sup  operators to the real part, and  averaging   over subvarieties.  
 Then any element of this class can be  approximated, uniformly in the variables and in the prime $p$, by a polynomial expression in $\Psi_p(\xi)$  at certain
 algebraic functions $\xi$ of the variables, where $\Psi(n \mod p) = exp(2 \pi i n/p)$ is the standard additive character.
 
  \end{abstract}
\maketitle

\begin{section}{Introduction}
\label{intro}
The first-order theory of the class of finite fields was determined in a fundamental paper of James Ax (\cite{ax}).
He understood that the basic definable sets must be taken to be, not just the points of algebraic varieties, but
also their finite projections; in line with the \'etale theory that Grothendieck and his school were developing in these years.
Once this is granted, Ax showed that Weil's Riemann hypothesis for curves leads to a quantifier-elimination result.
Finally, having reduced to quantifier-free formulas,  the set of sentences true  in every (or in almost every) finite field is determined by the Chebotarev density theorem.  

Ax's student Kieffe studied zeta functions in this connection.  A   definable measure, corresponding to 
 the leading coefficient of the asymptotic expression for the cardinality of a definable set,  was shown to exist and 
 have good properties  by Chatzidakis, van den Dries and Macintyre in \cite{cdm}.

 Now Weil wrote in 1948 about a ``connection between various types of exponential sums, occurring in number-theory, and the so-called Riemann hypothesis in function-fields" \cite{weil}.    An exponential sum is an expression such as 
$\sum_{x \in X} \chi(f(x))$, where $X$ is (for our purposes) a subset of $F^n$, $F$ a finite field, $f$ a polynomial,
and  $\chi$  an additive character,   i.e. a homomorphism $(\Ff_q,+) \to (\Tt,\cdot)$, where $\Tt=\{z: |z|=1\}$ is the unit circle in $\Cc$.   
 On $\Ff_p$, the group of characters is generated by the canonical one,  $n \mod p \mapsto e^{2 \pi i n/p}$.  
From his work on the Riemann hypothesis for curves, Weil was able to derive
a on exponential sums.    When $X=C(F)$ with $C$   an absolutely irreducible  over $F=\Ff_q$, $f$ is not
constant on $C$ 
 and $\chi \neq 1$, he showed
 \[ \frac{1}{q} \sum_{c \in C(F)} \chi(c) \leq c q^{-1/2} \]
 where the constant $c$ is bounded in algebraic families, i.e. depends only on $n$ and the degrees of $f$ and polynomials cutting out $C$.  This inequality is known as the   {\em Weil bound}.  
 The ideas used by Weil in this connection, greatly amplified,  became  central in the Grothendeick-Deligne theory, and in equidistribution results; 
 see e.g. \cite{kowalski-icms}.

 None of this, of course, can be represented  
within Ax's theory of pseudo-finite fields, since the additive character $\chi$ is not part of the structure.  
Nevertheless,  Emmanuel Kowalski was able to demonstrate  that   exponential sums and definability in finite fields fit very well
together.  In \cite{kowalski},   he showed how to estimate exponential sums, not only over curves  or varieties but also when $X$ above is a general definable set in the language of rings.

The contribution of the present paper is to allow the additive character to be part of the model-theoretic structure.  
The language thus includes the symbols of the ring language  $=,+,\cdot,0,1$, along with 
  an additional  symbol $\Psi$ whose interpretation is an additive character. 
This changes the meaning of definability:     any function obtained from $\chi$, or from characteristic functions of varieties,
by   $\min,\sup$ and polynomial operators  taken in any order is now viewed as definable.    The measure arising from averaging in finite fields remains   definable, and thus one may include the corresponding integration operators as well.       A quantifier-elimination
 theorem is proved, analogous to Ax's in that the basic functions involve a push-forward from finite covers.  
 As stated in the abstract, this implies a posteriori that any of the functions obtained by sup, min or integration   
 can be uniformly approximated by basic functions.  Model-theoretic properties  become meaningful; we show that the theory is simple, and indicate some interesting phenomena, in particular a nontrivial connected component of the Kim-Pillay group, when it is embedded within the model companion ACFA of the theory of difference fields. 
  
  Adding $\Psi$ to the language  presupposes a passage to continuous logic, able to accommodate functions with real or complex values.     As equality on the field is treated in the usual way (with no structural metric present)  only a very basic form of continuous logic is needed;    we review it below, following the statement of the main theorem.  
    In a structure $F$ for this logic, a formula $\phi(x_1,\ldots,x_n)$ 
 is interpreted as a function $\phi^F:  F^n \to \Cc$, with bounded image (the bound depends only on $\phi$).  In case the image is
 discrete, say  $\{0,1\}$, the 
 pullback of $1$ is called a {\em  definable subset} of $F^n$, or for emphasis a 
 {\em discretely definable subset}; but these can be rare.

For any class of (possibly enriched) fields  $\mathcal{C}$, we define
the {\em characteristic zero asymptotic theory}  of $\mathcal{C}$ to be
  $Th(\mathcal{C}) \union \{2 \neq 0,3 \neq 0 ,\cdots\}$.   As we only consider characteristic $0$ limits in this paper, 
   we will also simply refer to this as the {\em limit theory} of $\mathcal{C}$.
   
 \begin{thm} \label{summary} Consider the characteristic zero asymptotic theory of the class of   finite fields $\Ff_q$ enriched with a nontrivial additive character $\Psi$.   
    \begin{enumerate}
 \item The theory is decidable, and  has an explicit axiomatization $\Pfp$. 
 \item  $\Pfp$    admits quantifier-elimination relative to    algebraically bounded quantifiers.
   \item  If $F$ is an ultraproduct of the enriched fields $(\Ff_q,\Psi)$, the  pseudo-finite measure is definable.   The image under $\Psi$ of a definable subset of $F^n$ with its pseudo-finite measure,   is a finite union of
rational affine subspaces of $T^n$, with rationally weighted Lebesgue measure.  
  \item  $\Pfp$ is   a simple theory:  indeed a higher amalgamation principle holds.
\item  \label{conservative} $\PFp$ is conservative over the pure  language of rings inasfar as definable sets go:   
any definable subset of $F^n$ is already 
  definable in the language of fields.   
  \item  \label{completions-t} The completions of $\Pfp$  are
     determined by their `absolute numbers', i.e. by  the field of algebraic numbers lying in a model 
  of $T$, along with the additive character $\Psi$ restricted to that field.

 \end{enumerate}
     
 \end{thm}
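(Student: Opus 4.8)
The plan is to follow the architecture of Ax's analysis of pseudo-finite fields, with the Weil bound on exponential sums playing the role that Weil's Riemann hypothesis for curves plays there. The technical core is the quantifier-elimination statement (2); everything else is extracted from it together with standard model-theoretic and number-theoretic inputs. First I would fix the continuous-logic setting: a formula is a function built from the atomic ring formulas, the symbol $\Psi$, constants, the operations $+,\cdot$ and real polynomials, the connectives $\min,\sup$ applied to real parts, and the averaging (integration) operators attached to the pseudo-finite measure. I would isolate the class of \emph{basic} functions --- those obtained by pushing forward, along a finite generically \'etale cover, a function of the form $\Psi(\xi)$ for an algebraic function $\xi$, together with characteristic functions of subvarieties and polynomial combinations thereof --- and make precise what ``algebraically bounded quantifier'' means: after reduction, the only genuinely unbounded operation allowed is such a finite push-forward. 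The axiom system $\Pfp$ then consists of the theory of pseudo-finite fields of characteristic $0$, the axioms expressing that $\Psi$ is a nontrivial additive character, and the scheme of estimates furnished by the Weil bound, phrased as exact identities in the limit.

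For (2), the heart of the matter, I would argue by induction on the number of field variables being eliminated, the key instance being: given a definable family $X_s \subseteq F^n$ and a polynomial $f$, evaluate $\frac1q\sum_{x\in X_s}\Psi(f(x))$ in an ultraproduct of the $(\Ff_q,\Psi)$. Fibering $X_s$ over a suitable base so that the fibers are generically absolutely irreducible curves $C$, I would apply the Weil bound fiber by fiber: when $f$ is non-constant on $C$ the fiberwise sum is $O(q^{-1/2})$ and hence contributes $0$ in the limit, while on the locus where $f$ is constant along the fiber the sum factors through the base and the induction applies. The Lang--Weil estimates handle the cardinalities of the pieces uniformly, and the non-absolutely-irreducible and small-dimensional strata are absorbed into lower-dimensional error terms. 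Carrying this out uniformly in the parameters $s$ and in $q$ is what forces the basic functions to involve finite push-forwards --- the strata and the constancy loci are cut out by algebraically bounded, not quantifier-free, conditions --- and yields that modulo $\Pfp$ every formula is equivalent to a basic one; decidability in (1) then follows since the basic/quantifier-free theory is recursively axiomatized.

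Granting (2), the remaining items fall out. For (6) and the completeness half of (1): a basic function is determined by the isomorphism type of the finite covers involved together with the action of $\Psi$ on their arithmetic, so a completion of $\Pfp$ is pinned down by the Galois data over $\Qq$ --- controlled by Chebotarev density as in Ax --- plus the restriction of $\Psi$ to the algebraic numbers in the model, which is precisely the ``absolute'' datum. For (3): definability of the pseudo-finite (Lang--Weil) measure is proved as in Chatzidakis--van den Dries--Macintyre, now in the continuous setting; the pushforward of this measure along the coordinatewise map $\Psi\colon F^n\to\Tt^n$ applied to a definable $X$ is computed by the same fibration, and the Weil bound gives equidistribution on each stratum, so the image measure is a finite sum of rationally weighted Haar measures on rational affine subtori, i.e. on rational affine subspaces of $\Tt^n$. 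For (5): if a formula is $\{0,1\}$-valued then by (2) it equals a basic function; but a nonconstant basic function built genuinely from $\Psi$ takes values densely in a product of circles (again by equidistribution), so it cannot be $\{0,1\}$-valued unless the $\Psi$-part is trivial, leaving a ring-definable set.

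Finally, for (4) I would deduce simplicity from the independence theorem over algebraically closed substructures: the ring reduct is the theory of pseudo-finite fields, which is simple with this property, and since $\Psi$ is an additive character the enriched structure is a tame expansion --- the $\Psi$-values over a set are governed by the additive-group structure already present --- so forking independence in the reduct lifts to the enriched structure and the amalgamation of types goes through; the higher ($n$-)amalgamation principle is obtained the same way, using that the field reduct satisfies it and that the $\Psi$-data on a system of algebraically independent realizations can be chosen compatibly. The main obstacle throughout is the uniform, parametrized form of the Weil and Lang--Weil bounds together with the fibration reduction in (2): one must control the constants in algebraic families and verify that the constancy loci and irreducibility strata are themselves handled by algebraically bounded quantifiers, so that the induction closes with basic functions and nothing more.
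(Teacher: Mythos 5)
Your high-level architecture matches the paper's (the Weil bound in place of the Riemann hypothesis for curves; $\Psi$ pushed forward along finite covers as the basic formulas; Chebotarev plus the character on the absolute numbers for the completions), but your treatment of the central step (2) has two genuine gaps. First, your ``key instance'' evaluates the average $\frac{1}{q}\sum_{x\in X_s}\Psi(f(x))$; in this continuous logic the quantifiers to be eliminated are $\sup$ and $\inf$, not expectations, so what you have sketched is the proof of measure definability (item (3)), not of QE. To eliminate $\sup_x$ one must control the \emph{closure of the image} of a definable set under $\Psi^{(n)}$, and here your claim that ``the strata and the constancy loci are cut out by algebraically bounded conditions'' fails: the relevant locus is where a curve lies in a \emph{rational} affine subspace ($\sum m_i x_i=b$ with $m_i\in\Zz$, $b$ arbitrary), and whether a member of a definable family satisfies such a relation is not an algebraically bounded --- or even definable-in-families --- condition. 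The paper explicitly flags this: the image tori ``jump'' on such loci (see the remark following \propref{images}); the jumps are harmless for integrals (they occur on lower-dimensional strata) but not for suprema. The paper sidesteps the issue entirely by proving QE via a back-and-forth between saturated models, where the density statement \lemref{modelsdense} is only ever applied to a single curve at a time, the rational relations being absorbed by passing to a $\Qq$-linearly independent generating set of the additive group generated by the coordinates.

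Second, and more seriously, you offer no substitute for \lemref{qe0}: if two characters on $\acl(A)$ agree on $A$ and assign the same values to every $\Psi^n_{sym}$-term over $A$, then some element of $Aut(\acl(A)/A)$ carries one to the other. This is what makes quantifier-free types (in the language with $\kappa$ and $\Psi^n_{sym}$) determine complete types, hence what makes both QE and item (6) true; the paper proves it by a non-obvious trick with the product of Galois-conjugate linear forms $\prod_{g\in G}g\cdot\bigl(\sum_{b\in\beta}\Psi(b)T_b\bigr)$, whose coefficients are shown to be expressible as integral combinations of $\Psi^n_{sym}$-terms over $A$. Your assertion that ``a basic function is determined by the isomorphism type of the finite covers involved together with the action of $\Psi$ on their arithmetic'' is exactly the content of this lemma, asserted rather than proved. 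The remaining items are in the right spirit, though for (4) the paper needs a specific joint $\Qq$-linear-independence argument across the $(n-1)$-element faces (not just a formal ``lifting'' of independence from the reduct), and for (5) the precise mechanism is that $\Psi^{(n)}(D)$ is a finite union of cosets of subtori whose preimages are PF-definable with parameters, so that the classes of the connectedness equivalence relation are already ring-definable.
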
  

 These assertions will be explained and proved in \secref{main} and \secref{sec4}.    At this point we will just make a few comments.

(1)   Decidability holds in a strong sense: given a sentence $S$ 
(formed using the basic relations, connectives and quantifiers), and $\e>0$, one can effectively find a
sentence $S'$ such that $|S-S'| < \e$ in any model of $T$, and a number field $L$,  such that $S'$ has only quantifiers ranging $L$; and the set of possible values of $S'$ is $P(\T^n)$ for some explicit polynomial function
$P$ and some $n$.

    Much of the model  theory in \thmref{summary}, including a quantifier-elimination 
 to the level of bounded quantifiers, should   carry thorough for a character of any
commutative algebraic group.  For instance for multiplicative characters, the axioms will concern curves in Cartesian powers of the multiplicative group $G_m$,
that do not identically satisfy  a multiplicative relation $\Pi_i x_i^{m_i}=c$ of bounded height $\sum_i |m_i|$.   
Certainly, different issues will arise in the interface with geometry.    
 For example, some special multiplicative characters -
those whose image   is contained in the $m$'th roots of unity, for fixed $m$ - are already definable in the pseudo-finite field structure, and can be understood within the existing theory; the new limit theory will be valid for large finite fields with sufficiently general (rather than   arbitrary nontrivial) multiplicative characters.     In any case,   for simplicity, we will restrict attention for the present to the additive character.  

 An alternative presentation of the results is possible, employing  a new
sort $\widehat{F}$ for the whole `definable character group', in place of  naming a single specific character.   
   $\widehat{F}$ carries an abelian group structure, as well as an action
of $G_m(F)$, transitive on $\widehat{F} \m (0)$.  In  place of $\Psi$
the language includes a continuous-logic relation $\psi$ on $F \times \widehat{F}$, with values in $\Tt$.  
The universal axioms assert that $\psi$ is $\Zz$-bilinear, and $\psi(cx,y) = \psi(x,cy)$.  In particular,
for any $d \in \widehat{F}$, $\Psi_d(x):= \psi(d,x)$ is an additive character.  
  In addition, $\Pfpw$ asserts that $(F,\Psi_c) \models \Pfp$.      Then $\Pfp$ interprets $\Pfpw$; indeed $\Pfp$ is
  bi-interpretable with $\Pfpw$ if one adds a constant symbol $d$ in the sort $\widehat{F}$ to the language, with the sole axiom
  $d \neq 0$; the interpretation of $\Psi$ will be $\Psi_d$.   Then   \thmref{summary} holds for $\Pfpw$ in full; the completions are now
  determined purely by   their  absolute numbers as fields.  We will not adopt this variation in part since it is special to the additive group, and also so as not to sweep under the rug the natural question discussed in the next paragraph.

On the prime fields $\Ff_p$, one can define a canonical additive character
\[ \Psi_p: \ \Ff_p \to \Tt \]
\[ {a}\bmod{p} \mapsto exp(2 \pi \frac{a}{p}) \]
   The limit theory of the class of finite fields $(\Ff_p,\Psi_p)$ with this specific choice of additive character
contains $\Pfp$, and is generated over it by some quantifier-free sentences.  
All the assertions of \thmref{summary} hold for this theory
with the possible exception of the first, decidability.    An explicit axiomatization $\Ppp$   is suggested, that would imply decidability; but whether or not it suffices to axiomatize the limit theory  hinges on  on a certain open question in number theory,
  close to a well-known problem (and theorem) of Duke, Friedlander, Iwaniec in   \cite{dfi}.  This will be discussed
in \secref{prime}.

The last section contains some   remarks, conjectures and questions regarding extensions to $p$-adic additive characters, and (especially) to difference fields and ACFA.

 For a field $F$, $F^a$ will denote the algebraic closure.
 
 \ssec{Continuous logic}       A {\em continuous model theory} quite able to deal with real- or complex-valued
  functions was already available at the time of Ax's paper; see \cite{chang-keisler-c}.    But it was much more recently,
  following  sustained work of many, that the boundaries with discrete first-order logic were dissolved, extending
   the scope of model theory, almost seamlessly, beyond pure algebra; we refer to  \cite{bybhu}.   Much of the
 effort in developing the general theory arises from a metric, that replaces equality in the discrete case.  For
 our present purposes this metric is unnecessary, and even impossible, i.e. the only metric one could consistently impose is the discrete one.   Thus the framework is a very small variation on the familar case; we briefly present it here.  
  
 Function symbols and the formation of terms  are treated in the usual way.
Basic relations $R$ come with a compact subset $V_R \subset \Cc$, their intended range of values.  (We treat $\Cc$ as having a distinguished $\sqrt{-1}$, so that a complex-valued relation is equivalent, if desired, to two real-valued ones.)
  It is best to think of $\phi$  not as a yes-no question with a smeared out set of possible answers, but simply as a question that has a range of possible answers in the first place.   
In case $V_R = \{0,1\}$, we can call $V_R$ two-valued; in particular this is the case for the equality symbol.   
Formulas are formed using connectives and quantifiers, then closed under uniform limits (uniform over all evaluations in all structures for the language.)   The connectives are complex conjugation and complex polynomial operations  $+,-,\cdot$; however by Stone-Weierstrass we can also throw in any  continuous function from $\Cc^n$ to $\Cc$, without changing the set of formulas.  
  Quantifiers are replaced by continuous $\Cc$-valued maps on the space of compact subsets of a given compact of $\Cc$; suprema and infima of real-valued functions suffice.    
  A structure $\uA$ is a set $A$ along with a function  $R^{\uA} :A^n \to V_R$, for each basic $n$-ary relation $\phi$.  The interpretation $\phi^{\uA}$ of an formula $\phi$, along with the compact set $V_\phi$ in which it takes values,  is then defined in the obvious way.

All the usual definitions and notions of basic model theory generalize readily, once one gets used to the transposition.  For instance, a sentence is a formula with no variables.  For each structure $\uA$,
$Th(\uA)$ is the map $\phi \mapsto \phi^{\uA}$ from the set of sentencs $\phi$ to points in $V_\phi$. 
The theory of a class $\mathcal{C}$ of structures is the assignment of a closed subset to each $V_\phi$,
namely the closure of $\{ \phi^{\uA}: \uA \in \mathcal{C}\}$.  
 A theory is {\em complete} if it gives each sentence $\psi$ a definite value
(in $V_\psi$.)    A complete theory $T$ is  {\em decidable} if  there exists an algorithm that  given 
a sentence $\psi$, and any $\e>0$, is guaranteed to terminate and output
 the value of $\psi$ up to a possible error below $\e$.    Likewise, a possibly incomplete theory is decidable if
 given  a sentence $\psi$, and any $\e>0$, an algorithm can output a finite subset $F$ of $V_\psi$, such that for any
 $\uA \models T$, 
 $\psi^{\uA}$ is at distance at moset $\e$ from  $F$; and conversely any point of $F$ is at distance at moset $\e$ from 
 some such $\psi^{\uA}$.  When $T$ is 
complete and has an explicit (or recursive)
 axiomatization,   it is automatically decidable.
  
   The compactness theorem holds:  if $\Phi$ is a set of pairs  is $(\phi,B)$   consisting of a sentence $\phi$ and 
 a closed subset $B$ of  $V_\phi$,  we say $\Phi$ is {\em satisfiable} if there exists a structure $\uA$ with
 $\phi^{\uA} \in B$ for each pair $(\phi,B) \in \Phi$.  $\Phi$ is {\em finitely satisfiable} if each finite subset is
 satisfiable.  Then any finitely satisfiable set is satisfiable.  
 
 There is also an ultraproduct construction of $\uA$.  
     The theory of 
saturated models generalizes, etc.  In fact deeper theories including stability and simplicity generalize too,
\cite{cats}, but we will not use them here.
 
 Let us recall also   the definition of  quantifier-elimination in this setting:
 
\begin{defn}  $T$ admits quantifier-elimination 
if for any formula $\psi$ and any $\e>0$ there exist atomic formulas $\phi_1,\ldots,\phi_k  $ and a continuous 
function $C$ such that whenever $M \models T$ and $a \in M^x$, we have 
$|\psi(a) - C(\phi_1(a),\ldots,\phi_k(a))| < \epsilon$.
\end{defn}

The usual   criteria for QE go through from the discrete 1st-order logic case.
When $T$ admits quantifier-elimination, a type is determined by a quantifier-free type (and only then.)  
(Proof:  the continuous map restricting complete types to qf types will under these circumstances be a bijection;
as the two spaces are compact Hausdorff, it is a homeomorphism.)  It follows likewise that   $T$ admits QE provided,
for any two   $\lambda^+$-saturated models, $\lambda \geq |L|$,
 any isomorphism between substructures of cardinality $\leq \lambda$ extends to an another
 whose domain includes a prescribed element.

\ssec{Acknowledgements}
 
 Many thanks to Chieu-Minh Tran and to Jamshid Derakhshan for early discussions,
 and to Zo\'e Chatzidakis, Alexis Chevalier, 
 Anand Pillay,  Micha\l{} Szachniewicz,  and  two anonymous referees, for their   comments.

Exponential sums in a model theoretic setting were discussed by Tomasic in \cite{tomasic}, who noted that in positive characteristic the additive character is definable, and used equidistribution to determine the theory of certain 
reducts of pseudo-finite fields.    
Indeed if $(F,+)$ has exponent $p$, a  character $\chi: F \to \Tt$    takes only $p$ values,  the $p$'th roots of $1$;
the kernel of $\chi$ can be taken to be the image of the Artin-Schreier map $\wp(x)=x^p-x$;  and so Ax's
 discrete first-order theory already interprets  the $p$-element group $Hom(F/ \wp(F), \Cc^*)$.    For this reason,
we will consider only the characteristic zero limit in this paper.

Chieu Minh Tran \cite{chieuminh}   used the exponential sum estimates   to determine the (discrete) first order theory of  $\Ff_p^{a}$ with     'multiplicative intervals'.

 Zilber \cite{zilber} used them with a view to quantum mechanical integrals, taking different limits than we do here.
    
  \end{section}

  \begin{section}{A wrong turn}
  
  \label{wrongturn}
  This section is merely a ``no through way" sign on a sideroad; the reader may   pass it by without loss, except perhaps of some  contrast:  what seems like a slight variation  leads inexorably to undecidability.

  Let us identify the universe of $\Ff_p$ with $0,1,\ldots,p-1$, with addition and multiplication modulo $p$.  
Define  $\Lam_p: \Ff_p \to [0,1]$ by $\Lam_p(m)=m/p$; it is   the inverse of the function $[0,1] \meet \frac{\Zz}{p} \to \Ff_p$,
$x \mapsto px \mod p$.   Taking an ultraproduct of these structures, we obtain a pseudo-finite field $F$ with a function 
$\Lam: F \to [0,1]$.  We have $\Psi= e \circ \Lam$, where $e(x)  = e^{2 \pi i x}$.  From the point of view of continuous logic,
this is an interpretation ($e$ being continuous) but not   a bi-interpretation, since $e$ is not quite injective.
We will soon see that $(F,+,\cdot,\Psi)$ does not interpret $(F,+,\cdot,\Lam)$ in any way, though it 
does of course interpret $(F,+,\cdot,\sin \circ \Lam)$, and is in fact bi-interpretable with it.

On first seeing Kowalski's results, I was struck by this statement.      (I understand Lou Van den Dries made similar observations and questions.)   
 
\begin{prop}\label{k07b}[Kowalski]  For an affine algebraic variety $V \subset \Aa^n$, defined over $\Ff_p$,
the number of points of $V$ in $[0,p/2]^n$ is approximately $p^{\dim(V)} 2^{-n}$, {\em unless}
$V$ is contained in some linear hyperplane of $\Aa^n$.  \end{prop}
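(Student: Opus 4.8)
The plan is to treat $N_V := \#\bigl(V(\Ff_p) \cap [0,p/2]^n\bigr)$ as an incomplete character sum over $V$ and estimate it by the Weil bound, just as with ordinary incomplete exponential sums. Identify $[0,p/2]$ with $I := \Lam_p^{-1}\bigl([0,\tfrac12]\bigr) = \{0,1,\dots,\lfloor p/2\rfloor\} \subseteq \Ff_p$ and write $\mathbf{1}_I$ for its indicator, so that $N_V = \sum_{x \in V(\Ff_p)} \prod_{i=1}^{n}\mathbf{1}_I(x_i)$. First I would Fourier-expand $\mathbf{1}_I$ over $\Ff_p$: $\mathbf{1}_I(x) = \sum_{a \in \Ff_p}\widehat{\mathbf{1}}_I(a)\,\Psi_p(ax)$, where $\widehat{\mathbf{1}}_I(a) = \tfrac1p\sum_{m \in I}\Psi_p(-am)$. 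The two facts needed are $\widehat{\mathbf{1}}_I(0) = |I|/p = \tfrac12 + O(1/p)$ and, by summing the geometric series, $|\widehat{\mathbf{1}}_I(a)| \le \bigl(p\,|\sin(\pi a/p)|\bigr)^{-1}$ for $a \neq 0$, whence $\sum_{a \in \Ff_p}|\widehat{\mathbf{1}}_I(a)| = O(\log p)$.

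Substituting and interchanging the order of summation gives
\[
N_V \;=\; \sum_{\mathbf a \in \Ff_p^{n}} \Bigl(\textstyle\prod_{i=1}^{n}\widehat{\mathbf{1}}_I(a_i)\Bigr)\, S_V(\mathbf a), \qquad S_V(\mathbf a) := \sum_{x \in V(\Ff_p)} \Psi_p\bigl(\langle \mathbf a, x\rangle\bigr).
\]
The term $\mathbf a = 0$ contributes $\widehat{\mathbf{1}}_I(0)^{\,n}\,\#V(\Ff_p) = \bigl(\tfrac12 + O(1/p)\bigr)^{n}\,\#V(\Ff_p)$; combined with the Lang--Weil estimate $\#V(\Ff_p) = p^{\dim V} + O(p^{\dim V - 1/2})$ for absolutely irreducible $V$ (in general the number of top-dimensional geometric components enters), this is the asserted main term $2^{-n}p^{\dim V}\bigl(1 + o(1)\bigr)$.

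Everything then hinges on the terms $\mathbf a \neq 0$, and this is where the hypothesis enters. The linear form $x \mapsto \langle \mathbf a, x\rangle$ is constant on $V$ exactly when $V$ is contained in the affine hyperplane $\{\langle \mathbf a, x\rangle = c\}$ for the corresponding $c$; since $\mathbf a \neq 0$ this is a genuine hyperplane of $\Aa^{n}$. Hence the assumption that $V$ lies in no linear hyperplane says precisely that $\langle \mathbf a, \cdot\rangle$ is non-constant on $V$ for every $\mathbf a \neq 0$, so the Weil bound for exponential sums over $V$ applies and yields $|S_V(\mathbf a)| \le C\,p^{\dim V - 1/2}$ with $C$ depending only on $n$ and $\deg V$. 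Therefore
\[
\sum_{\mathbf a \neq 0}\Bigl|\textstyle\prod_{i}\widehat{\mathbf{1}}_I(a_i)\Bigr|\,\bigl|S_V(\mathbf a)\bigr| \;\le\; C\,p^{\dim V - 1/2}\Bigl(\bigl(\textstyle\sum_{a}|\widehat{\mathbf{1}}_I(a)|\bigr)^{n} - \widehat{\mathbf{1}}_I(0)^{\,n}\Bigr) \;=\; O\bigl(p^{\dim V - 1/2}(\log p)^{n}\bigr),
\]
which is $o(p^{\dim V})$. Adding this to the main term gives $N_V = 2^{-n}p^{\dim V} + O\bigl(p^{\dim V - 1/2}(\log p)^{n}\bigr)$, uniformly over $V$ of bounded degree, which is the assertion.

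The one ingredient that is not elementary is the Weil bound for $S_V(\mathbf a)$ when $\dim V > 1$, since the excerpt records it only for curves; this is the step I expect to be the real work. For absolutely irreducible $V$ one can bootstrap it from the curve case (together with Lang--Weil) by fibring $V$ over the affine line through $t = \langle \mathbf a, x\rangle$ and writing $S_V(\mathbf a) = \sum_{t}\Psi_p(t)\bigl(\#V_t(\Ff_p) - p^{\dim V - 1}\bigr)$, using $\sum_t \Psi_p(t) = 0$, with fibrewise error $O(p^{\dim V - 3/2})$; the only care is the possible splitting of the generic fibre (Stein factorisation), which enters at the same order, or one may simply quote Deligne or Katz for the uniform estimate. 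A minor point is that the clean form of the statement wants $\dim V \ge 1$ and, for reducible $V$, that no top-dimensional component lie in a hyperplane; the zero-dimensional case over prime fields has to be read separately.
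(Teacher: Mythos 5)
The paper states this proposition without proof, citing \cite{kowalski}, and your argument is in essence Kowalski's own: complete the sum by Fourier-expanding the indicator of the box, extract the main term from the zero frequency via Lang--Weil, and control the nonzero frequencies by the Weil--Deligne bound for $\sum_{x\in V(\Ff_p)}\Psi_p(\langle \mathbf{a},x\rangle)$, the hyperplane hypothesis being exactly the non-constancy of these linear forms on $V$. The only non-elementary input is that higher-dimensional bound, and your reduction of it to the curve case by fibring over $\Aa^1$ --- with the splitting of the generic fibre contributing $p^{\dim V-1}$ times a character sum over the Stein curve, hence still $O(p^{\dim V-1/2})$ --- is the standard and correct way to obtain it, so the proposal is sound (granting, as you note, that for reducible $V$ the hypothesis must be imposed on each top-dimensional component).
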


The caveat on linear hyperplanes has an innocent ring at first hearing, and the statement sounds very much like the geometric input 
needed for a quantifier-elimination theorem, in the style of Ax,  for the theory of pseudo-finite fields enriched with $\Lam$.  
However,    this cannot be the case:

\begin{prop}\label{1}  Let $I_p=\lam(\Ff_p)$ be the image in $\Ff_p$ of an interval in $\{0,\ldots,p-1\}$.   Assume $|I_p|$ 
and $|\Ff_p \m I_p|$ are unbounded.     Let 
\[T = \{\phi:  (\exists n)(\forall p>n)(\Ff_p,+,\cdot,I_p) \models \phi \} , \ \ \ \ \ \ \ \ 
 T^\perp = \{\phi:  \neg \phi \in T \} \]
  Then $T,T^\perp$ are   recursively inseparable.
\end{prop}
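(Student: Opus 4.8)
\emph{Outline.} The point is that naming an interval in $\Ff_p$ already lets one reconstruct, definably and uniformly in $p$, an initial segment of $(\Nn,+,\cdot,\le)$ whose length tends to infinity with $p$; one then reduces the halting problem to separating $T$ from $T^\perp$ via the standard coding of computations. Since every $(\Ff_p,+,\cdot,I_p)$ is nonempty we have $T\cap T^\perp=\emptyset$, and it suffices to produce a recursive $e\mapsto\sigma_e$ carrying the halting set $W=\{e:\text{the }e\text{-th machine halts on input }e\}$ into $T$ and its complement into $T^\perp$: any recursive set separating $T$ from $T^\perp$ would then pull back to one separating $W$ from $\Nn\m W$, a decision procedure for $W$, which does not exist.

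\emph{Normalising and interpreting.} $I_p$ is a set of $|I_p|$ consecutive residues; its left endpoint is the unique $x\in I_p$ with $x-1\notin I_p$, hence definable, and translating by its negative we may assume $I_p=\{0,1,\dots,|I_p|-1\}$. The difference set $I_p-I_p$ is all of $\Ff_p$ exactly when $|I_p|\ge(p+1)/2$, a definable condition; passing to the complementary arc (and re-translating) in that case, we arrive at a uniformly definable normalisation with $I_p=\{0,1,\dots,\ell_p\}$ and $2\ell_p<p$, where $\ell_p\to\infty$ (this is how the unboundedness hypothesis on the two arc sizes is used). Now put $x\preceq y:\iff y-x\in I_p$; the inequality $2\ell_p<p$ forbids wrap-around, so on $I_p$ this is exactly the usual order of $\{0,\dots,\ell_p\}$: a definable discrete linear order whose least element is the field $0$, whose greatest is $\ell_p$, and whose successor map is $x\mapsto x+1$ off the top. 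Finally put $a\oplus b=c:\iff a,b,c\in I_p\wedge a+b=c$ and $a\otimes b=c:\iff a,b,c\in I_p\wedge(\forall b'\preceq b)(a\cdot b'\in I_p)\wedge a\cdot b=c$. Since $a,b\in I_p$ forces the integer $a+b$ to be $<p$, and since $2\ell_p<p$ guarantees that if $a\cdot 0,a\cdot 1,\dots,a\cdot b$ all lie in $I_p$ then the field product $a\cdot b$ equals the integer $ab$, the relations $\oplus,\otimes$ are precisely addition and multiplication on $\{0,\dots,\ell_p\}$ truncated to that set. Thus $(I_p,\oplus,\otimes,\preceq,0,\ell_p)$ is, definably and uniformly in $p$, the initial segment $\{0,1,\dots,\ell_p\}$ of $(\Nn,+,\cdot,\le)$ with $0$ and $\ell_p$ named, and $\ell_p\to\infty$.

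\emph{Coding computations and conclusion.} In any structure $(\{0,\dots,N\},+,\cdot,\le,0,N)$ of this kind one has enough bounded arithmetic (e.g. G\"odel's $\beta$-function, expressible from $+,\cdot$) to code finite computations, and a standard argument yields a recursive $e\mapsto\tau_e$, with each $\tau_e$ a sentence of the language $\{+,\cdot,\le,0,N\}$ asserting ``there is a code, bounded by $N$, of a halting computation of the $e$-th machine on input $e$''. If that machine halts, $\tau_e$ holds in $(\{0,\dots,N\},+,\cdot,\le,0,N)$ for all large $N$ (once $N$ is large enough to hold a code of the halting run), while if it does not halt no such code exists and $\tau_e$ fails in every such structure. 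Relativising $\tau_e$ through the interpretation of the previous paragraph yields $\sigma_e$; since $\ell_p\to\infty$, $\sigma_e$ holds in $(\Ff_p,+,\cdot,I_p)$ for all large $p$ when the $e$-th machine halts on $e$, so $\sigma_e\in T$, and $\sigma_e$ holds in \emph{no} $(\Ff_p,+,\cdot,I_p)$ when it does not, so $\sigma_e\in T^\perp$. This is the required reduction.

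\emph{Main obstacle.} The substantive work is entirely in the middle paragraph: making the reconstructed order and arithmetic \emph{genuine}, rather than merely correct on a short window, is what forces the passage to the shorter arc (so that $2\ell_p<p$) and the ``$a\cdot b'\in I_p$ for all $b'\preceq b$'' clause in the definition of $\otimes$ -- these are exactly what eliminate wrap-around modulo $p$ -- and one must check that this normalisation is given by a single formula independent of $p$. Once that is done everything downstream is automatic, including the absence of spurious ``halting codes'' at small primes, because $(I_p,\oplus,\otimes,\preceq,0,\ell_p)$ is literally an honest initial segment of $(\Nn,+,\cdot,\le)$, however small $\ell_p$ may be.
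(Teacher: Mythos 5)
Your proof is correct, and its first half (definably reconstructing, uniformly in $p$, an honest initial segment $\{0,\dots,\ell_p\}$ of $(\Nn,+,\cdot,\le)$ with $\ell_p\to\infty$) is the same strategy as the paper's; the difference lies in the endgame and in how mod-$p$ overflow is controlled. The paper quantifies only over the definable subinterval $[0,\lceil\sqrt[d+1]{a_p}\rceil]$ -- root extraction being definable from the interval via $[0,a_p/4]\cap\{x:x^2\in[0,a_p/2]\}$ -- so that every degree-$d$ monomial stays far below $p$ and a polynomial identity $F=G$ holds in $\Ff_p$ iff it holds in $\Zz$; it then reduces Hilbert's tenth problem directly, invoking Matiyasevich, with no formalization of computation at all. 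You instead keep the whole interval but make multiplication \emph{partial}, with the guard $(\forall b'\preceq b)(a\cdot b'\in I_p)$ plus $2\ell_p<p$ ruling out wrap-around (your induction on $b'$ is the right check), and then reduce the halting problem by G\"odel $\beta$-function coding in truncated arithmetic. Each route buys something: the paper's is shorter and computation-free but leans on MRDP; yours uses only classical G\"odel machinery and produces a clean, reusable interpretation of $(\{0,\dots,N\},+,\cdot,\le,0,N)$, at the cost of the (routine but necessary) absoluteness check that $\Sigma_1$ witnesses in the truncated structure are genuine -- which you do address. The author in fact flags your route as an available alternative in the remark following the proposition (``\dots or even directly to G\"odel's theorem, in place of Matiyasevich''). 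Two minor points you handle more explicitly than the paper: the definable case split on $|I_p|\ge(p+1)/2$ via $I_p-I_p=\Ff_p$ (the paper just says ``we may assume $a_p<p/2$''), and the absence of spurious halting codes at small $p$. Like the paper, you read ``unbounded'' as $\min(|I_p|,|\Ff_p\m I_p|)\to\infty$, which is what the one-directional inclusion into $T$ requires.
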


\prf   Let $<$ be the image of the archimedean ordering on $\{0,\ldots,p-1\}$.  We may assume $a_p:= |I_p| < p/2$.   By  intersecting the interval $I_p$ with translates, one obtains a uniformly definable family of convex definable sets including   all intervals $[0,a]$ with $a \leq a_p$.  So the restriction of $<$ to $[0,a_p]$ is definable.  Note that $\lceil a_p/m \rceil$ (the integer part of $a_p/m$) is definable for each $m$, and that $\lceil \sqrt{a_p} \rceil$ is definable  since $[0,\sqrt{a_p}/2]$ is the largest segment contained in \[[0,a_p/4] \meet \{x: x^2 \in [0,a_p/2] \} \ \ ;\]
similarly for higher roots.
Now consider  polynomials $F,G$ in variables $x=(x_1,\ldots,x_k)$ and of   some given degree $d$, with non-negative integer  coefficients.  Let $D$ be a set with $T \subset D$ and $D \meet T^{\perp} = \emptyset$.
Then $\Nn \models (\exists x) (F(x)=G(x))$ if and only if 
\[  \ulcorner  (\exists x_1,\ldots,x_k \in [0,\sqrt[d+1]{a_p}] ) F=G \urcorner \in D \]
  Thus we can decide existence
 of solutions of integer polynomials if we have access to $D$, so by Matiyasevich's theorem $D$ is not recursive.     \eprf

\begin{rem} \begin{enumerate}
\item  Following G\"odel, using the Chinese remainder theorem, 
 we could define the exponential function $x \mod{p} \mapsto 2^x \mod p$ on an unbounded interval
 contained in $[0,\log_2(p)]$; 
 then we could   appeal to Davis-Putnam-Robinson instead, or (with more quantifiers) even directly to G\"odel's theorem, in place of Matiyasevich. 
 \item  $T$ is in fact $\Sigma^0_2$-complete:  one can reduce to 
  $T$ the question of whether a given r.e. set $E$ is finite.      Say again that $E$ is defined by  
   $(\exists x) (F(x)=G(x))$, with $F,G$ of degree $d \geq 2$.   It follows from \cite{ingham} 
   that for large $n$,  there exists a prime $p$ with $n^{d+1} <p< (n+1)^{d+1}$;
   so $n =  \ulcorner \sqrt[d+1]{p} \urcorner $.  Thus $E$ is finite iff for all but finitely many primes $p$,
   $(\Ff_p,+,\cdot,I_p) \models    \neg (F( \lceil \sqrt[d+1]{p} \rceil)=G( \lceil \sqrt[d+1]{p} \rceil))$; iff this sentence lies in $T$.      (Here Ingham's theorem was used to obtain a reasonable large `tally'.   We could use the more basic Chebysheff's theorem / Bertrand's postulate instead, obtaining arithmetic up to $\log(p)$ in place of  $\sqrt[d+1]{p} $;
   or a trivial bound on prime gaps, reaching approximately $\log \log (p)$;  all sufficient of course for our present purposes.)  
 \end{enumerate}
 \end{rem}

  Let us note two   consequences of  \propref{1}.   The first is a curious, purely negative alternative proof of another result of Kowalski's, that he drew as a corollary of his positive results.

\begin{cor}  \lbl{intervals}
 Intervals cannot be defined uniformly in $\Ff_p$, unless they are bounded or co-bounded. \end{cor}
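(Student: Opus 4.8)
The plan is to deduce the corollary from \propref{1} by a decidability argument. Suppose, towards a contradiction, that there is a formula $\phi(x)$ in the language of rings with $I_p:=\phi(\Ff_p)$ an interval of $\{0,\dots,p-1\}$ for all large $p$, and with $I_p$ neither bounded nor co-bounded, which I read as $\sup_p\min\!\bigl(|I_p|,\,|\Ff_p\m I_p|\bigr)=\infty$, i.e.\ neither $I_p$ nor its complement is \emph{uniformly} bounded. (Some such reading is forced: $\phi(x):=\bigl((\exists t)(t^2+1=0)\to x=0\bigr)\wedge\bigl(\neg(\exists t)(t^2+1=0)\to x\neq -1\bigr)$ defines uniformly the interval $\{0\}$ for $p\equiv1\pmod 4$ and the interval $\{0,\dots,p-2\}$ for $p\equiv3\pmod 4$, so a purely pointwise version of the statement would be false.) By \cite{cdm}, for $p$ large $|I_p|$ is a polynomial in $p$ whose coefficients depend only on the splitting type of $p$ in a fixed number field; since $|I_p|\le p$ this polynomial has the form $a_ip+b_i$ with $a_i\in\Qq\cap[0,1]$, and the hypothesis forces $a_i\in(0,1)$ for $p$ ranging over some Chebotarev class $P_0$. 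Thus $P_0$ is an infinite recursive set of primes, and along it both $|I_p|$ and $|\Ff_p\m I_p|$ tend to infinity.

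Now \propref{1}, applied to the class $\{(\Ff_p,+,\cdot,I_p):p\in P_0\}$, shows that the associated theories $T$ and $T^\perp$ are recursively inseparable; in particular $T$ is not recursive. (Only $\min(|I_p|,|\Ff_p\m I_p|)\to\infty$ along $P_0$ is used here: that is what makes $[0,\sqrt[d+1]{|I_p|}\,]$ eventually engulf any prescribed natural-number solution, so the Matiyasevich reduction in the proof of \propref{1} still goes through over $P_0$, with no prime-gap input needed.) On the other hand each $I_p$ is $\emptyset$-definable in $(\Ff_p,+,\cdot)$ by the single formula $\phi$, so replacing the predicate symbol for $I_p$ throughout by $\phi$ is a primitive-recursive reduction of $T$ to the characteristic-zero asymptotic theory of $\{\Ff_p:p\in P_0\}$ in the pure language of rings. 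By Ax's theorem (\cite{ax}) --- quantifier elimination down to the quantifier-free level together with the Chebotarev density theorem, which remains valid when $p$ is restricted to the single Frobenius class $P_0$ --- this last theory is decidable, hence recursive. Therefore $T$ is recursive, a contradiction, and the corollary follows.

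There is no deep obstacle here: the whole content sits in \propref{1}, and the corollary merely repackages it, the one real observation being that ring-definability of $I_p$ would make the enriched structures $(\Ff_p,+,\cdot,I_p)$ asymptotically no harder than the bare finite fields, whose theory Ax proved decidable. The only points that need a little care are the two just flagged --- reading ``bounded or co-bounded'' uniformly (without which the quadratic-residue example is a counterexample), and checking that passing to the Chebotarev class $P_0$ breaks neither Ax's decidability nor the argument of \propref{1} --- and both become routine once \cite{cdm} is invoked to produce a well-behaved $P_0$.
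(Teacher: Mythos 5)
Your proof is correct, and its engine is the same as the paper's: the paper's entire argument for this corollary is the two-line observation that Ax's theorem makes the ring-language asymptotic theory decidable, while \propref{1} shows the enriched theory and its complement are recursively inseparable, so the intervals cannot be uniformly ring-definable. Where you genuinely diverge is in taking the conclusion in its strong, uniform sense ($\sup_p\min(|I_p|,|\Ff_p\m I_p|)<\infty$) and noticing---correctly---that under that reading \propref{1} cannot be applied off the shelf: its proof really needs $\min(|I_p|,|\Ff_p\m I_p|)\to\infty$ along the index set, not merely both quantities unbounded as the hypothesis literally says, and your quadratic-residue example shows the distinction is not pedantic. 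Your repair---use \cite{cdm} to extract a Frobenius class $P_0$ on which the density of $I_p$ is strictly between $0$ and $1$, then run both \propref{1} and Ax's decidability relativized to $P_0$---is sound, and the two relativizations are unproblematic for exactly the reasons you give (no prime-gap input is used in the inseparability argument, and the asymptotic theory over a Frobenius class is still decidable by Ax's analysis). Two small touch-ups: \cite{cdm} gives $|I_p|=\mu_i p+O(\sqrt p)$ on each class, not a polynomial $a_ip+b_i$; and to conclude that some class has $\mu_i\in(0,1)$ you also need the standard fact that a uniformly definable family of subsets of the affine line either has uniformly bounded cardinality or has positive density on each class (so that ``$\mu_i=0$ but unbounded'' cannot occur)---this follows from compactness together with the dimension theory of \cite{cdm} and closes your case analysis. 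The paper, reading the hypothesis of \propref{1} and the conclusion of the corollary as exact negations of one another, skips all of this; your version buys the stronger uniform conclusion at the cost of the CDM detour.
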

\prf  Ax showed that the theory of pseudo-finite fields is decidable, while according to \propref{1}
they cannot be uniformly defined in any decidable theory.  \eprf 
    
     We include one further statement in this vein.

\begin{cor}  Let $X_p \subset \Ff_p$  
 be a  definable set in the theory of pseudo-finite fields of characteristic $0$, or any decidable expansion.      Assume $|\Ff_p \m X_p|$ is unbounded with $p$.  
Let $k(p)$ be the minimal archimedean gap in $\Ff_p \m X_p$, i.e. the minimum $k \geq 1$ such that  
for some $a \in \Ff_p \m X_p$ we have $a+1,\ldots,a+k-1 \in X_p$, $a+k \notin X_p$.      Then $k(p)$ is bounded.  
\end{cor}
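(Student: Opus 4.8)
I would argue by contradiction, playing a lower bound on $|Y_p|$, where $Y_p:=\Ff_p\setminus X_p$, coming from \emph{definability}, against an upper bound on $|Y_p|$ forced by a large value of $k(p)$. Suppose $k(p)$ is unbounded and pass to an infinite set $S$ of primes along which $k(p)\to\infty$; from now on $p$ ranges over $S$. Since $|\Ff_p\setminus X_p|$ is unbounded we may also assume $|Y_p|\ge 2$ (for $|Y_p|\le 1$ one has $k(p)=p$). The first, elementary, ingredient is that for a fixed integer $m$ the statement ``$k(p)\ge m$'' is literally the first‑order sentence asserting that every $a\in Y_p$ is followed by $m-1$ consecutive elements of $X_p$; unwinding this (and its automatic symmetric version) says exactly that any two consecutive elements of $Y_p$ in the cyclic archimedean order are at distance $\ge k(p)$. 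Hence the cyclic gaps of $Y_p$ sum to $p$ and are each $\ge k(p)$, so
\[
|Y_p|\ \le\ p/k(p).
\]

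\textbf{The lower bound and the conclusion.} On the other hand $Y_p$ is (uniformly) definable and $|Y_p|\to\infty$. By Ax's quantifier elimination together with the Chatzidakis--van den Dries--Macintyre asymptotics \cite{cdm} — or, in the $\Psi$‑enriched theory $\Pfp$, by the conservativity statement \thmref{summary}(5), which replaces $Y_p$ by a field‑definable set, combined with the definability of the pseudo‑finite measure \thmref{summary}(3) — a definable subset of $\Aa^1$ over $\Ff_p$ has cardinality that is either bounded in $p$ or asymptotic to $c\,p$ for a fixed rational $c>0$ (the key point being that its ``dimension'' is an integer, so there is no intermediate growth rate). Since $|Y_p|\to\infty$ we are in the second case, so $|Y_p|\ge \tfrac{c}{2}\,p$ for all large $p\in S$. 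Combining the two bounds, $\tfrac{c}{2}\,p\le |Y_p|\le p/k(p)$, whence $k(p)\le 2/c$ along $S$ — contradicting $k(p)\to\infty$. Therefore $k(p)$ is bounded.

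\textbf{On full generality and the main obstacle.} The phrase ``or any decidable expansion'' invites a proof using decidability alone, via \propref{1}: one would like to show that $k(p)\to\infty$ makes a uniformly definable family of unbounded, co‑unbounded archimedean intervals available, contradicting \corref{intervals}. Concretely, pick with parameters a pair $a,\,a+k(p)\in Y_p$ realizing the minimal gap and attempt to cut out the run $\{a+1,\dots,a+k(p)-1\}\subset X_p$ — the unique maximal $X_p$‑run of minimal length lying just above $a$ — then recover its archimedean order and run the bootstrap of \propref{1} in reverse. The main obstacle, and the reason I would fall back on the density argument above for the theories actually at issue, is that ``being a maximal run of $X_p$'' involves an \emph{a priori} non‑first‑order transitive closure, and knowing the minimal gap $k(p)$ merely as an element of $\Ff_p$ does not suffice to define the corresponding interval — a formula doing so uniformly in a parameter is precisely what \corref{intervals} forbids. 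So the delicate point any decidability‑only proof must overcome is the extraction of genuine archimedean structure from the run data of $X_p$.
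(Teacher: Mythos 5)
Your main argument takes a genuinely different route from the paper's. The paper does not use the Chatzidakis--van den Dries--Macintyre asymptotics at all here: it observes that the set
\[ \bigl\{x: (\forall u)\,\bigl(u \notin X_p \wedge u+1 \in X_p \;\Rightarrow\; u+x \in X_p\bigr)\bigr\} \]
is uniformly definable, contains $[1,k(p)-1]$ and omits $0$ and $k(p)$ (the paper asserts it equals the interval $[1,k(p)]$ ``by minimality of $k$''), and then feeds this into \propref{1}/\corref{intervals}: a uniformly definable, unbounded and co-unbounded interval is incompatible with decidability. Your counting argument ($|Y_p|\le p/k(p)$ against the CDM dichotomy, transported to $\Pfp$ by conservativity) is correct for the two theories it addresses, and is arguably more robust than the paper's sketch since it does not need the displayed set to be literally an interval. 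What it buys less of is generality: it does not touch the clause ``or any decidable expansion,'' which is part of the statement and is exactly what the paper's route through \propref{1} delivers, CDM being a theorem about pseudo-finite fields rather than about arbitrary decidable expansions.

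On that remaining clause, the ``main obstacle'' you describe is not actually an obstacle, and this is the idea you are missing: one does not need to isolate the maximal run above a particular $a$ (which would indeed smell of transitive closure). Quantifying over \emph{all} run-starting points $u\in Y_p$ at once, as in the displayed formula, the intersection $\bigcap_u (X_p-u)$ automatically contains $[1,k(p)-1]$ (every gap is $\ge k(p)$) and excludes $k(p)$ (some $u$ realizes the minimal gap); that single first-order formula is what the paper plugs into \propref{1}. (To be fair, the paper's one-line claim that this set is \emph{exactly} $[1,k(p)]$, rather than merely containing $[1,k(p)-1]$ and missing $k(p)$, is not justified, and \propref{1} wants a genuine interval; so the published proof is not airtight either.) Finally, a caveat shared by both arguments: after passing to a subsequence $S$ with $k(p)\to\infty$ you assert $|Y_p|\to\infty$ on $S$, but the hypothesis only gives unboundedness over all $p$; as literally stated the corollary is false without reading the hypothesis as $|Y_p|\to\infty$ (take $X_p=\Ff_p\smallsetminus\{0\}$ when $p\equiv 1\pmod 4$ and $X_p$ the set of squares when $p\equiv 3\pmod 4$). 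The paper makes the same silent strengthening, so this is a defect of the statement rather than of your argument specifically.
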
  
\prf  Let $k=k(p)$.  By taking $u=a$ we see that 
\[\{x: (\forall u)(u \notin X, u+1 \in X \Rightarrow u+x \in X ) \}\] is contained in $[1,k]$; 
it must be equal to $[1,k]$ by minimality of $k$.  Thus an interval of length $k(p)$ is definable.
So by \propref{1} $k(p)$ is bounded, or else the interval is cobounded, and then   $F \m X$ is bounded.  \eprf

 Thus one cannot expect to have a good quantifier elimination  
 for  $Th(\{(\Ff_p,+,\cdot,I_p): p \})$,  hence not for $Th(F,+,\cdot,\lam)$.

I found this situation puzzling; it seems to deprive us of a natural logical
 setting for   \propref{k07b}, playing a role analogous to the theory \cite{ax} of pseudo-finite fields for 
 \cite{cdm}.   
 
   A part of the solution is the use of continuous logic; 
   When intervals are defined in continuous logic, via a map $[0,p-1] \to [0,1]$ or more conveniently
 to the unit circle $\T \subset \Cc$, the nature of the logic  blurs the endpoints of intervals, and thus    removes at least the apparent source of undecidability.  The price one pays, of course, is that while the theory accounts well for intervals
 of length $p/m$ for any given $m$, it cannot access any intervals of  length of order $o(p)$.   
 
 But even the continous logic theory of $(F,+,\cdot,\Lam)$ is undecidable.  Recall that a $\bigwedge$-definable is called
 {\em definable} if its complement is also $\bigwedge$-definable.    
  On   $\Lam \inv [1/3,2/3]$, the  relation $\Lam(x) < \Lam(y)$  is $\infty$-  definable:    since $\Lam(x) < \Lam(y)$ iff $\Lam(x-y) \geq 2/3$.
 Thus $\Lam \inv (2/5,3/5)$ is $\bigwedge$-definable (the conjunction of $2/5<\Lam(x)$ and $x<3/5$, both taken on 
 $[1/3,2/3]$; but clearly so is the complement $\Lam \inv [0,2/5] \union \Lam \inv [3/5,1]$; so $\Lam \inv (2/5,3/5)$ is definable, and $\Lam \inv (<)$ is definable on it. 
 
 \begin{cor}  The continuous logic theory of the class of enriched prime fields $(\Ff_p,+,\dot,\Lam)$ is undecidable.
 \end{cor}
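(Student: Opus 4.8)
Most of the work is already in place. The discussion preceding the statement produces exactly the geometric input needed: a two-valued formula $\theta(x)$ in the continuous-logic language $\{=,+,\cdot,0,1,\Lam\}$ which, in $(\Ff_p,+,\cdot,\Lam)$ for all large $p$, defines the set $I_p=\Lam\inv(2/5,3/5)\cap\Ff_p=\{m\in\{0,\ldots,p-1\}: 2p/5<m<3p/5\}$. This $I_p$ is the image of an interval in $\{0,\ldots,p-1\}$, and both $|I_p|\sim p/5$ and $|\Ff_p\setminus I_p|\sim 4p/5$ are unbounded, so \propref{1} applies verbatim to the family $(\Ff_p,+,\cdot,I_p)$ and gives that the sets $T,T^\perp$ named there are recursively inseparable. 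The plan is to transport this inseparability along the interpretation $I\mapsto\theta$, in exactly the spirit in which \corref{intervals} was deduced from Ax's decidability of pseudo-finite fields; so what I would actually prove is that the characteristic-zero limit theory of the class $\{(\Ff_p,+,\cdot,\Lam)\}_p$ is undecidable, this being the sense of ``theory'' relevant throughout.

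Concretely I would: (i) fix $\theta$ and a bound $N_0$ such that $\theta$ is two-valued and $\theta\inv(1)=I_p$ in $(\Ff_p,+,\cdot,\Lam)$ for every prime $p\ge N_0$; (ii) define the evident computable translation $\phi\mapsto\phi^\theta$ from first-order sentences of the ring language expanded by a unary predicate $I$ to continuous-logic sentences of $\{=,+,\cdot,0,1,\Lam\}$, replacing every atom $I(t)$ by $\theta(t)$ and reading $\exists$ as $\sup$, $\forall$ as $\inf$, $\neg$ as $1-(\cdot)$, $\wedge$ as $\min$, so that $\phi^\theta$ is again two-valued; (iii) check that for $p\ge N_0$ one has $(\Ff_p,+,\cdot,I_p)\models\phi$ iff $\phi^\theta$ evaluates to $1$ in $(\Ff_p,+,\cdot,\Lam)$, whence by {\L}o\'s $\phi\in T$ iff $\phi^\theta$ has value $1$ in every model of the limit theory, and $\phi\in T^\perp$ iff $\phi^\theta$ has value $0$ there. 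Given (iii), a decision procedure for the limit theory run on $\phi^\theta$ with tolerance $1/4$ separates the case $\phi\in T$ (value set $\{1\}$) from $\phi\in T^\perp$ (value set $\{0\}$); the set of $\phi$ on which it certifies ``$\phi^\theta\equiv 1$'' is then a recursive set containing $T$ and disjoint from $T^\perp$, contradicting \propref{1}. Hence the theory is undecidable.

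The one step where I would be careful is (i): the $\bigwedge$-definability established in the preceding paragraph literally concerns the ultraproduct, and one must descend it to the finite structures. The point is that ``$\theta$ is two-valued'' and ``$\theta$ cuts out $\Lam\inv(2/5,3/5)$'' are expressible by a countable family of closed conditions in the language (using that $\Lam$ is itself a symbol), each preserved under {\L}o\'s; a property holding almost everywhere with respect to every nonprincipal ultrafilter holds for all large $p$; and the finitely many residues near $2p/5$ and $3p/5$, whose membership in $I_p$ is ambiguous, are irrelevant to the size bounds that \propref{1} requires. Everything else — the effectivity of $\phi\mapsto\phi^\theta$, and the extraction of a genuine recursive separator from a decision procedure — is routine bookkeeping, once the characteristic-zero limit theory is fixed as the object whose decidability is in question, exactly as in \corref{intervals}.
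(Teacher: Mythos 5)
Your overall architecture is the right one and is the one the paper intends: produce a uniformly definable interval $I_p$ of unbounded size and co-size, translate sentences of $(\Ff_p,+,\cdot,I_p)$ into two-valued continuous-logic sentences, and let \propref{1} turn a decision procedure with tolerance $1/4$ into a recursive separator of $T$ and $T^\perp$. Steps (ii), (iii) and the extraction of the separator are fine. The genuine gap is in step (i), precisely where you say you would be careful. Descending the $\bigwedge$-definability from the ultraproduct cannot deliver what you need. A formula $\theta$ that is exactly two-valued with $\theta\inv(1)=\Lam\inv(2/5,3/5)$ in every nonprincipal ultraproduct only yields, in the finite structures, that $\theta$ is within $\e_p$ of $\{0,1\}$ everywhere (with $\e_p\to 0$, fixable by post-composing with a continuous rounding function) and that $\theta\inv(1)$ is sandwiched between $\{m: (2/5+\eta)p\le m\le (3/5-\eta)p\}$ (for each fixed $\eta>0$, $p$ large) and $\{m: 2p/5<m<3p/5\}$. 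The set of residues on which $\theta\inv(1)$ is uncontrolled is therefore not finite: it is a set of size $o(p)$ but in general unbounded, sitting near $2p/5$ and $3p/5$. And the issue is not ``size bounds'': \propref{1} requires $I_p$ to be literally the image of an interval, because its proof manipulates endpoints --- intersecting with translates to produce the segments $[0,a]$, then reading off $\lceil a_p/m\rceil$ and $\lceil\sqrt[d]{a_p}\rceil$ as endpoints of definable segments. An interval perturbed by an arbitrary unbounded subset of its endpoint halos is not an interval, and the arithmetic encoding does not survive that perturbation without further argument.

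The fix is to construct $\theta$ directly so that the quantity fed to the continuous connective is uniformly bounded away from its transition region --- this is exactly what the paper's device of testing $\Lam(x-y)\geq 2/3$ on the middle third accomplishes. A clean choice: since $\Lam(5x)=5\Lam(x)-\lfloor 5\Lam(x)\rfloor$ in every $\Ff_p$ ($p\neq 5$), the quantity $5\Lam(x)-\Lam(5x)=\lfloor 5\Lam(x)\rfloor$ always lies in $\{0,1,2,3,4\}$; taking $g$ continuous with $g(2)=1$ and $g(j)=0$ for $j\in\{0,1,3,4\}$, the formula $\theta(x)=g\bigl(5\Lam(x)-\Lam(x+x+x+x+x)\bigr)$ is exactly two-valued in every $\Ff_p$ and $\theta\inv(1)=\{m: 2p/5\leq m<3p/5\}$ is exactly the image of an interval of length about $p/5$. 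With this $\theta$ in place of your step (i), the rest of your reduction goes through verbatim.
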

  
  In the next section, we begin studying 
 the theory $\PFp$, with $\Lam$ replaced by $e \circ \Lam$.   For   measure-theoretic or distribution statements (see \secref{expectation}) it is indistinguishable from the theory of $(F,+,\cdot,\Lam)$, and in particular
 does provide a framework including   \propref{k07b}, while remaining in the tame world.

  \end{section}

\begin{section}{The theory of pseudo-finite fields with an additive character}\label{main}
We work in continuous logic, as described above.

\ssec{The language}

The language $L_+$  has a sort $F$ for the field, with equality treated in the usual (discrete) way; with the ring operations.  
  And there is one additional unary relation $\Psi:  F \to \T$, where $\T=\{z \in \Cc: |z|=1 \}$ is the complex unit circle.   
   The function $ (x_1,\ldots,x_n) \mapsto (\Psi(x_1),\ldots,\Psi(x_n)):  F^n \to \T^n$ will  be denoted by  
   $\Psi^{(n)}$, or just    $\Psi$  when no confusion can arise. 
\ssec{Defined terms}  \label{definedterms}  

Let $P(u_1,\ldots,u_n; x)$ and $Q(u_1,\ldots,u_n; x)$ be integral polynomials.   In any field $F$, we define $\kappa(b_1,\ldots,b_n)= c$  if there exists at least one
root in $F$ of $P(b_1,\ldots,b_n)$, and for any such root $d$ we have $Q(b_1,\ldots,b_n,d)=c$.   (If no such $c$ exists,
let $\kappa_{P,Q}(b_1,\ldots,b_n)=0$.)   
The functions $\kappa$ are algebraic functions, and at the same time are well-defined functions in the usual sense;
we will view them as basic terms in the language.  

With these terms in the language, Ax's theory PF (in the usual, discretely valued first order setting) admits quantifier-elimination, and any substructure is definably closed.  

We also include some   defined terms using $\Psi$.    Let
\[ \Psi^n_{sym}(c_1,\ldots,c_n) :=  \sum \{ \Psi(x):  x^n + c_1x^{n-1} + \cdots + c_n = 0 \} \]
  (To be precise, if $x^n + c_1x^{n-1} + \cdots + c_n= \Pi_{i=1}^n (x-\alpha_i)$ with $\alpha_i \in F^{a}$,
 we define $\Psi^n_{sym}(c_1,\ldots,c_n)^F = \sum_{i=1}^n \Psi(\alpha_i)$; where $\Psi(x)=0$ for 
 $x \in F^{a} \setminus F$.)  $\Psi^n_{sym}$ is clearly definable; it is our analogue of the algebraically bounded quantifiers required for quantifier-elimination in the case of Ax.

Thus a basic formula has the form $\Psi^n_{sym}(g_1(u),\ldots,g_n(u))$ with $g_i$ a basic
PF-definable function as above.
 
\begin{rem} \label{lrem} We note a few closure properties of these terms.
\begin{enumerate}[label=(\roman*)]
\item  $ \Psi^1_{sym}(-c)= \Psi(c)$.  In particular  $1= \Psi^1_{sym}(0)$.
\item  $\Psi^n_{sym}(-c_1,c_2,\cdots,(-1)^nc_n)$ is the complex conjugate
of $\Psi^n_{sym}(c_1,\ldots,c_n)$.
\item  $\Psi^n_{sym}(c_1,\ldots,c_n)+ \Psi^m_{sym}(d_1,\ldots,d_m) = \Psi^{n+m}_{sym}(e_1,\ldots,e_{n+m})$
where the $e_i$ are the coefficients of $(\sum c_ix^i)(\sum d_j x^j)$.
\item  Let $\alpha(x,u)$ be any PF formula, such that $PF \models (\forall u)(\exists^{\leq n} x) \alpha(x,u)$.
Then $ \sum \{ \Psi(x): \alpha(x,u) \}$ 
can be expressed as a basic formula.  
(Let $t$ be a variable and write 
  $\Pi \{ (t- c):  \alpha(c,u) \} = \sum_{i=0}^n g_i(u) t^{n-i} $;
then the $g_i(u) (i=0,\ldots,n)$ are PF-definable functions, and $\Psi^{\alpha}(u)= \Psi^n_{sym} (g_1(u),\cdots,g_n(u))$.)
\item  More generally, if $\beta(x,u)$ is a function into $\Nn$ with finite image, whose level sets are PF-definable,
then $  \sum_x \{ \beta(x,u) \Psi(x): \alpha(x,u) \}$ can be expressed as a   basic formula.  (Split $\alpha$ into a disjoint union of $\alpha_i$ on which $\beta$ has 
constant value $v_i \in \Nn$, and apply the previous two items.)
\item   The product $\Psi^n_{sym}(c_1,\ldots,c_n)\Psi^m_{sym}(d_1,\ldots,d_m) $ 
can be expressed as a basic formula.  (Take $\beta$ to give the number of ways that an element can be written
as a sum of a root of $\sum c_ix^i$ and a root of $\sum d_j x^j$.)
 \item  Hence the expressions $\frac{1}{m} \Psi^n_{sym} (u)$ form a ring, closed under complex conjugation.
  By Stone-Weierstrass, 
 an arbitrary continuous function composed with basic formulas can be uniformly approximated by a basic formula.  

\end{enumerate}
 \end{rem}

\ssec{Axioms for  $\PFp$}  \label{axioms}
A hyperplane $Y \subset \Aa^n$ is said to have {\em height $\leq m$} if it can be defined 
by a linear equation $\sum A_i X_i = b$ with $A_i \in \Zz$, $|A_i| \leq m$.

\noindent
\begin{enumerate}
 \item $F$ is a field containing $\Qq$, with a unique 
 Galois extension of order $n$ for each $n$;
 \item   $\Psi: (F,+) \to \T$ is a homomorphism;
 \item   Let $h \in \Qq[z_1,z_1 \inv,\ldots,z_n, z_n \inv]$ be a finite Fourier series 
 ( Laurent polynomial) with degrees $\leq m$, 
 real-valued on $\T^n$, with no constant term. 
  For any absolutely irreducible curve $C \subset \Aa^n$, not contained in any  hyperplane of height at most $m$, 
\[ \sup \{h(\Psi^{(n)}(x)): x \in C \} \geq 0 \]

\item[($\infty$)] Of course, if we add  the defined terms $\kappa, \Psi_{sym}$ to the language, their definition must be included among the axioms; in particular for $\kappa$,
\[  (\forall u_1,\ldots, u_n,x)(P(u_1,\ldots,u_n,x)=0 \to Q(u_1,\ldots,u_n,x)=\kappa_{P,Q}(u_1\ldots,u_n))\]
\end{enumerate}

The essential content of the AE axiom group (3) is  that   $\Psi(C)$ is dense in $\T^n$, provided 
  $C$ is not contained in any {\em rational} affine subspace of $F^n$; see \lemref{modelsdense} below.   One could more easily formulate axioms
    for $C$ not contained in {\em any} subspace of $F^n$; but
that would not suffice.

\begin{lem} \label{modelsdense}  \label{2.7} Let $F$ be a  field of characteristic zero, and $\Psi: (F,+) \to \T$ a homomorphism. Then $F \models \PFp (3)$ if and only if the following condition holds:

(3*)  Let $C \subset \Aa^n$ be an absolutely irreducible curve over $F$, not contained in any rational hyperplane
of $\Aa^n$.  Then $\Psi^{(n)} (C(F))$ is dense in $\T^n$.  
 
 \end{lem}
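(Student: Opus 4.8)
**Proof plan for Lemma 2.7 (equivalence of axiom group (3) with the density statement (3*)).**

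The plan is to show each direction by a compactness/approximation argument connecting the single inequality "$\sup_{x\in C} h(\Psi^{(n)}(x))\ge 0$" for real Laurent polynomials $h$ with no constant term to topological density of $\Psi^{(n)}(C(F))$ in $\T^n$. The direction $(3^*)\Rightarrow(3)$ is the easy one: if $\Psi^{(n)}(C(F))$ is dense in $\T^n$ and $h$ is a real-valued Laurent polynomial on $\T^n$ with no constant term, then $\int_{\T^n}h\,d\mu=0$ for Haar measure $\mu$ (the constant term is exactly the average), so $h$ cannot be strictly negative everywhere on $\T^n$; hence $\sup_{t\in\T^n}h(t)\ge 0$, and by density and continuity of $h$ this supremum is attained (approximately) along $\Psi^{(n)}(C(F))$, giving $\sup\{h(\Psi^{(n)}(x)):x\in C\}\ge 0$. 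One should be slightly careful that $C(F)$ may be a proper subset of $C$, but the axiom (3) as stated quantifies over $x\in C$ and the points of $C(F)$ are Zariski dense in $C$ (an absolutely irreducible curve over an infinite field has infinitely many $F$-points only after possibly passing to a model — but here we are assuming (3*) which already speaks of $C(F)$, so the inequality over $C\supseteq C(F)$ follows a fortiori). I would state this carefully: $(3^*)$ concerns $C(F)$, and (3) concerns $C$, and since $C(F)\subseteq C$ the sup over $C$ dominates, so $(3^*)\Rightarrow(3)$ is immediate once we know $\sup_{\T^n}h\ge 0$.

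For the harder direction $(3)\Rightarrow(3^*)$, suppose $\Psi^{(n)}(C(F))$ is \emph{not} dense in $\T^n$; let $Z$ be the closure, a proper closed subset. I want to produce a real Laurent polynomial $h$ with no constant term that is strictly negative on $Z$, contradicting (3). Pick an open ball $B\subset\T^n\setminus Z$. The idea is to take a nonnegative trigonometric polynomial $g\ge 0$ on $\T^n$ that is "large" (bounded below by a positive constant) on $B$ and has total mass (= constant term) controlled; then set $h:=g-(\text{const term of }g)$, which has zero constant term and is real-valued, and on $Z$ we have $h\le g-c_0$ where $c_0$ is the constant term. The subtlety is that to get $h<0$ on all of $Z$ we need $g<c_0$ on $Z$, i.e. the mass of $g$ concentrated on $B$ must exceed its values elsewhere. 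This is a standard Fejér-kernel-type construction: take $g$ to be (a translate of) a high-degree Fejér kernel concentrated near the center of $B$, scaled so its integral is, say, $1$; then $g$ is close to $0$ uniformly outside a small neighborhood of that center — in particular $g<1=c_0$ on $Z$ provided $Z$ avoids that neighborhood, which we arranged. Then $h=g-1<0$ on $Z$, so $\sup_{x\in C}h(\Psi^{(n)}(x))\le \sup_{t\in Z}h(t)<0$, contradicting axiom (3) applied to this $h$ and this $C$ (noting $C$ is by hypothesis not contained in any rational hyperplane, which is exactly the hypothesis under which (3) applies). The main obstacle is getting the quantitative concentration right: one must choose the degree $m$ of the Fejér-type kernel large enough that $g$ is uniformly small off $B$, and then axiom (3) is invoked \emph{for that degree $m$} and for curves $C$ not contained in a rational hyperplane — and "rational hyperplane" versus "height $\le m$ hyperplane" must be reconciled.

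That reconciliation is the real content and I would handle it as follows. Axiom (3) as stated requires $C$ not contained in any hyperplane of \emph{height} $\le m$, whereas (3*) requires $C$ not contained in any \emph{rational} hyperplane (any height). So in the direction $(3)\Rightarrow(3^*)$ we are \emph{given} the stronger hypothesis on $C$ (not in any rational hyperplane), hence in particular not in any height-$\le m$ hyperplane, so (3) applies for every $m$ — no issue. In the direction $(3^*)\Rightarrow(3)$ we must check: if $C$ is not contained in a height-$\le m$ hyperplane, is $\Psi^{(n)}(C(F))$ dense? Not necessarily — but we only need the inequality $\sup h\ge 0$ for \emph{this particular} $h$ of degree $\le m$, and here the point is that if $C$ \emph{is} contained in some rational hyperplane $\sum a_iX_i=b$ with the $a_i$ not all small, say of height $M>m$, then $\Psi^{(n)}(C)$ lies in the subtorus-coset $\{t:\prod t_i^{a_i}=\Psi(b)\}$, and one checks that a degree-$\le m$ Laurent polynomial with no constant term still cannot be strictly negative on this coset: its average over the coset (with respect to Haar measure on the subtorus) is again $0$, because a monomial $\prod t_i^{k_i}$ with $|k_i|\le m$ for all $i$ and $(k_i)\ne 0$ is non-constant on the coset (it would be constant only if $(k_i)$ is an integer multiple of $(a_i)$, impossible since $0<\max|k_i|\le m<M\le\max|a_i|$). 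So the average is $0$ and $\sup h\ge 0$ still holds. This "small monomials cannot be proportional to a large-height relation" observation is precisely why the height bound $m$ in the axiom matches the degree bound $m$ on $h$, and it is the key bookkeeping step; everything else is Fejér kernels and Haar measure. I would present the argument with this proportionality lemma isolated as the crux.
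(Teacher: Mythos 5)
Your direction $(3)\Rightarrow(3^*)$ is correct and is essentially the paper's argument (the paper approximates a bump function supported in $U$ by a rational trigonometric polynomial via Stone--Weierstrass rather than using a Fej\'er kernel, but the content is the same); note only that the axiom requires $h$ to have rational coefficients, so your translated kernel must be perturbed slightly, and that the $\sup$ in axiom (3) can only be read as a supremum over $C(F)$, since $\Psi$ is defined on $F$ alone --- which makes your ``a fortiori'' aside about $C$ versus $C(F)$ moot.

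The converse direction has a genuine gap, and it sits exactly at the step you describe as the crux. When $C$ lies in a rational hyperplane of height $>m$ but in none of height $\le m$, your argument shows that $h$ has average zero, hence supremum $\ge 0$, over the coset $Z$ of a subtorus that \emph{contains} $\Psi^{(n)}(C(F))$. But axiom (3) asks for $\sup\{h(\Psi^{(n)}(x)):x\in C\}\ge 0$, a supremum over the subset $\Psi^{(n)}(C(F))$ itself, and nothing you have said prevents $h$ from being strictly negative on that subset while taking nonnegative values elsewhere on $Z$. Closing this requires proving that $\Psi^{(n)}(C(F))$ is dense in the appropriate coset, and that is precisely where $(3^*)$ must be invoked; as written, your treatment of this case never uses $(3^*)$ at all, which cannot suffice --- axiom (3) fails, for instance, for the trivial character, so some input about $\Psi$ is indispensable. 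The paper's route: take a generic point $a=(a_1,\dots,a_n)$ of $C$, choose a $\Zz$-basis $b_1,\dots,b_l$ of the (free) subgroup generated by the $a_i$ in the additive group modulo $F$, so that $b=Ba$ and $a=Mb+a_0$ with $B,M$ integral and $a_0\in F^n$; then $C':=B(C)$ is isomorphic to $C$ and lies in no rational hyperplane, so $(3^*)$ makes $\Psi^{(l)}(C'(F))$ dense in $\T^l$, and $h\circ\Psi^{(n)}$ pulls back along $z\mapsto Mz+a_0$ to $h'\circ\Psi^{(l)}$ with $h'$ again free of constant term --- the height bound enters exactly in checking that no monomial of $h$ becomes constant under this substitution. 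A secondary flaw: your ``integer multiple'' criterion is only correct when $C$ satisfies a single rational affine relation. In general the normal vectors of rational hyperplanes containing $C$ form a subgroup $L\le\Zz^n$, a monomial $t^k$ is constant on $Z$ if and only if $k\in L$, and the correct observation is that a nonzero $k\in L$ of height $\le m$ would itself exhibit a forbidden low-height hyperplane containing $C$.
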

Here a {\em   rational hyperplane} is one defined by an equation $\sum_{i=1}^n m_i X_i = b$, with $m_i \in \Zz$ and $b \in F$.
 
\prf  Assume $\PFp (3)$ holds.  Let $U$ be a nonempty open subset of $\T^n$.   Find a continuous $g \geq 0$, whose support is contained in $U$,  and with $\int g d \lambda =3$, where  $\lambda$ denotes normalized Haar
 measure on $\T^n$.  Using Stone-Weierstrass,   find a   polyomial $h(x)$ in $z_1,\ldots,z_n,\bar{z_1},\ldots,\bar{z_n}$ with rational coefficients,
 of degree say $m$,  with $|| g-h|| < 1$ in the uniform norm; replacing $h$ by the average of $h$ with
 its complex conjugate, we may assume $h$ takes real values; in particular the constant term $h_0 \in \Rr$. 
   So $h_0= \int_{\T^n} h  \geq (\int_{\T^n} g - 1) > 1$.     By the axiom,  there exists $c \in C(F)$ with $|h(\Psi(c))| \geq 1$.  Thus $\Psi(c) \in U$.  As $U$ was arbitrary,
   $\Psi(C(F))$ is dense in $\T^n$.
   
 Conversely, assume (3*).  Let $C,h$ be as in $\PFp$ (3). 
 Let $(a_1,\ldots,a_n)$ be a generic point of $C$ over $F$.   Then $\{a_1,\ldots,a_n\}$ generate a torsion-free, hence free, subgroup of $\Aa/ \Aa(F)$; let $b_1,\ldots,b_l \in <a_1,\ldots,a_n> $ represent a free generating set.   Writing $a=(a_1,\ldots,a_n), b= (b_1,\ldots,b_l)$,
 we have $b=Ba$ for some $l \times n$- integral matrix $B$; and   $a = Mb+a_0$ for some $n \times l$-matrix $M$
 and $a_0 \in F^n$.  The matrix $B$ defines a linear transformation $B: \Aa^n \to \Aa^l$, and 
 $z \mapsto Mz+a_0$ 
 an affine transformation $\Aa^l \to \Aa^n$; their composition is the identity on $C$, showing that  
 $B$ restricts to an isomorphism $C \to C':= B(C)$.  
 
Write  $h=\sum  c_i h_i$ as a finite linear combination of  Laurent monomials.     For each monomial $h_i$ 
we have $h_i \circ \Psi^{(n)} \circ M = \Psi \circ H_i$ where $H_i(x)=\sum k_i x_i$ is a  linear form with $|k_i| \leq m$;
$H_i$ is nonzero since $h$, by assumption, has no constant term.  The linear form $H_i \circ M$ is also nonzero: 
otherwise $H_i$ would be constant on the image of $L$, but this image contains $C$, so $C$ is contained in a translate
of $\ker(H_i)$, contradicting the assumption that $C$ is not contained in any hyperplane of height $\leq m$. 
 It follows that $h \circ \Psi^{(n)} \circ L = h' \circ \Psi^{(m)} $ for an appropriate real-valued
Laurent polynomial; the  monomials of $h'$ are (constant multiples of) $z^{H_i \circ M}$, so $h'$ also has no constant term.    
Integrating $h'$ with respect to the Haar measure
 on   $\T^m$, we have $\int h' =0$ (since each monomial has integral $0$ on $\T^m$) and so $\sup h' \geq 0$.  By 
  (3*) applied to $C'$,   $\Psi^{(m)}(C')$ is dense in $\T^m$, so $\sup_{C'} h' \circ \Psi^{(m)} \geq 0$.
   This proves (3).      
\eprf

\begin{lem}  \label{axiomshold}  Let $\chi_q$ be any nontrivial additive character on $\Ff_q$, and 
let $F$ be any ultraproduct of $(\Ff_q,\chi_q)$ of characteristic zero.  Then $F \models \PFp$. 
 \end{lem}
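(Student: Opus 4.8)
The plan is to verify the three axiom groups of $\PFp$ one at a time in the ultraproduct $F=\prod_{\mathcal U}(\Ff_q,\chi_q)$, reducing each to a known fact about finite fields that transfers by \L{}o\'s's theorem. For axiom group (1): each $\Ff_q$ is a field, and the property ``for each $n$ there is a unique Galois extension of degree $n$'' holds for all finite fields (indeed $\Ff_{q^n}/\Ff_q$ is the unique one), and this is expressible by a scheme of first-order sentences in the ring language, so it passes to $F$; characteristic zero is guaranteed by hypothesis. For axiom group (2): $\chi_q$ is a homomorphism $(\Ff_q,+)\to\T$, i.e.\ $\chi_q(x+y)=\chi_q(x)\chi_q(y)$ and $\chi_q(0)=1$; these are (continuous-logic) sentences true in every factor, hence in $F$. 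The nontriviality of $\chi_q$ is used only to feed the Weil bound below.

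The heart of the matter is axiom group (3). Here I would invoke the \textbf{Weil bound} exactly as recalled in the introduction. Fix $m$, fix the combinatorial type (number $n$ of variables, degrees) of an absolutely irreducible curve $C\subset\Aa^n$ not lying in any height-$\le m$ hyperplane, and fix a real-valued Laurent polynomial $h$ of degree $\le m$ with no constant term. Over a finite field $\Ff_q$, $h(\chi_q^{(n)}(x))$ is a $\Qq$-linear combination $\sum_i c_i \chi_q(H_i(x))$ of additive characters composed with nonzero linear forms $H_i$ of bounded height. Since $C$ is absolutely irreducible and not contained in a translate of $\ker H_i$, the function $H_i$ is nonconstant on $C$, so Weil's estimate gives $|\frac{1}{|C(\Ff_q)|}\sum_{x\in C(\Ff_q)}\chi_q(H_i(x))|\le c\, q^{-1/2}$ with $c$ bounded in terms of $m,n$ and the degrees only. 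Summing, the average of $h\circ\chi_q^{(n)}$ over $C(\Ff_q)$ is $O(q^{-1/2})$, hence $\ge -\tfrac12$ say for $q$ large; in particular there exists $x\in C(\Ff_q)$ with $h(\chi_q^{(n)}(x))\ge -\tfrac12$. Now here is the key point: for a \emph{fixed} such datum $(m,n,\text{degrees})$ one can write down, in continuous logic, a sentence $\phi_{m,n,d}$ of the form ``for every absolutely irreducible curve $C$ of the given numerical type, not in a height-$\le m$ hyperplane, and every admissible $h$, $\sup_{x\in C}h(\Psi^{(n)}(x))\ge 0$''; this holds (with margin) in all but finitely many of the $(\Ff_q,\chi_q)$, hence in $F$ by \L{}o\'s. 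Ranging over all the countably many data recovers all instances of axiom (3).

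The main obstacle, and the only genuinely non-formal point, is the uniformity in the Weil bound: one must know that the constant $c$ is bounded as $C$ ranges over an algebraic family of absolutely irreducible curves and $h$ over Laurent polynomials of bounded degree. This is the standard uniform form of the Weil bound (and is precisely what is recalled in the introduction, ``bounded in algebraic families''); I would cite \cite{weil} (or \cite{kowalski-icms}) for it. A secondary technical wrinkle is the passage from ``$h\circ\chi_q^{(n)}$ on $C$'' to ``$\sum_i c_i\chi_q(H_i(x))$'': each Laurent monomial $z^{\alpha}$ in $z_1,\dots,z_n,\bar z_1,\dots,\bar z_n$ pulls back under $\chi_q^{(n)}$ to $\chi_q$ of an integral linear form of height $\le m$, using only that $\chi_q$ is a homomorphism and $\overline{\chi_q(x)}=\chi_q(-x)$; that $h$ has no constant term ensures every $H_i$ is a \emph{nonzero} form, which is what makes it nonconstant on the absolutely irreducible $C$ (else $C\subset$ a height-$\le m$ hyperplane). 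Everything else is bookkeeping with \L{}o\'s's theorem. Note that one never needs the equivalent formulation ``$\Psi(C(F))$ dense'' from \lemref{modelsdense} here; it is cleaner to verify (3) directly, though \lemref{modelsdense} would give an alternative route via equidistribution of $\chi_q$ on $C(\Ff_q)$.
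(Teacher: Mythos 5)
Your argument is essentially identical to the paper's proof: decompose $h$ (after removing the constant term) into Laurent monomials, note each pulls back under $\Psi^{(n)}$ to $\chi_q$ of a nonzero integral linear form of height $\le m$ that is nonconstant on $C$ precisely because $C$ avoids all height-$\le m$ hyperplanes, apply the uniform Weil bound together with $|C(\Ff_q)|\sim q$ to get $\sup_{C}h\circ\chi_q^{(n)}\ge -b'q^{-1/2}$, and pass to the ultralimit. The only slip is the phrase that the sentence $\sup\ge 0$ ``holds (with margin) in all but finitely many'' factors --- it need not hold in any factor; what your own estimate gives, and what the paper uses, is that the defect is $O(q^{-1/2})$ and hence vanishes in the characteristic-zero ultralimit by the continuous-logic form of \L{}o\'s's theorem.
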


\prf  
Let $h$ be a Laurent   polynomial (or finite Fourier series) of degree $\leq m$ in variables $z_1,\ldots,z_n,z_1 \inv,\cdots, z_n \inv$, and taking real values on $T^n$.   By
subtracting the constant term, we may assume it is zero.    

  Let $C \subset \Aa^n$ be an absolutely irreducible curve over $\Ff_p$,   not contained in any  subspace of height at most $m$.  
For a nonzero
$k \in \Zz^n$ of height $\leq m$, we consider $f(x) = k \cdot x = \sum_i k_ix_i$ as a function on $C$.   Then $f$ is not constant on $C$,
so the Weil bound applies.  Thus: 
  \[ |\sum_{x \in C(\Ff_p)} \chi_p(k \cdot x) |\leq b p^{ 1/2} \]
  for some $b$ independent of $p$ and of any parameters needed to define $C \subset \Aa^n$ within an algebraic family.
Now for each  monomial $\Pi_i z_i^{k_i}$ of $h$,  we have
$\Pi_i  \chi_p (x_i)^{k_i} = \chi_p(\sum_i k_ix_i)$.
 It follows that  
\[ |\sum_{x \in C(\Ff_p)} h(\chi_p^{(n)}(x)) |\leq b' p^{ 1/2} \]
for an appropriate $b' >0$ (namely $b$ times the number of monomials in $h$).   Thus 
\[ |C(\Ff_p)| \max_{x \in C(\Ff_p)}  h(\chi_p^{(n)}(x))  \geq  \sum_{x \in C(\Ff_p)} h(\chi_p^{(n)}(x))   \geq -b' p^{ 1/2}   \]
So $ \max_{x \in C(\Ff_p)}  h(\chi_p^{(n)}(x)) \geq -b' p^{ 1/2} / |C(\Ff_p)|$.  
Letting $p \to \infty$, taking into account that $|C(\Ff_p)| \sim p$, we obtain the result.  The proof for $\Ff_q$ is identical. \eprf

\ssec{Quantifier elimination}

   \begin{lem} \label{qe0} Let $F,F' \models \PFp$, let $A,A'$ be substructures of $F,F'$ respectively, for the 
   language enriched with function symbols $\kappa, \Psi^{(n)}_{sym}$ as in \ref{definedterms}.   Let  $\alpha: A \to A'$ be
an isomorphism.  Then $\alpha$ extends to an isomorphism 
$acl(A) \to acl(A')$; here $acl(A)$ denotes the relative algebraic closure of $A$ in $F$. \end{lem}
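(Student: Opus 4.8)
The plan is to reduce the statement to a concrete situation: after the isomorphism $\alpha\colon A\to A'$, we must extend it along a single algebraic element $a\in\mathrm{acl}(A)\subset F$ and then iterate (by a standard Zorn/chain argument, using the compatibility of $\kappa$-terms with unions of chains). So it suffices to treat $A\subset A(a)$ with $a$ algebraic over $\mathrm{Frac}(A)$. First I would dispose of the purely field-theoretic half: since $A$ is a substructure closed under the $\kappa_{P,Q}$ terms, $A$ is relatively algebraically closed in $F$ \emph{as a field} in the sense that every element of $\mathrm{acl}(A)$ in the ring-language is a value of some $\kappa$ — this is exactly what Remark~\ref{definedterms} records (``any substructure is definably closed'' for $\mathrm{PF}$). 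More precisely, to extend $\alpha$ I pick $a\in\mathrm{acl}(A)$, let $p(X)$ be its minimal polynomial over the fraction field of $A$, clear denominators to get an integral polynomial $P(u,X)$ with the $u$'s substituted from $A$, and observe that $a$ is a root of $P$ in $F$; applying $\alpha$ to the coefficients gives a polynomial $P^\alpha$ over $A'$ which, since $F'\models\mathrm{PF}(1)$ (unique Galois extension of each degree, hence pseudo-algebraically-closed-type behavior and in particular every finite extension is realized), has a root $a'$ in $F'$ of the ``same type'' in the field sense. The standard Ax argument — a substructure of a model of $\mathrm{PF}$ has a unique extension to its algebraic closure inside any model, because the absolute Galois groups match up via the ``unique Galois extension of order $n$'' axiom — gives a field isomorphism $A(a)\to A'(a')$ extending $\alpha$.

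The real content is checking that this field isomorphism is also an isomorphism for the enriched language, i.e.\ that it commutes with $\Psi$ and with the $\Psi^n_{sym}$ terms on the new elements. For $\Psi$ itself this is the crux: I must show $\Psi^{F}(a)=\Psi^{F'}(a')$. The key tool is the density axiom in the form of \lemref{modelsdense}(3*). Consider the element $a$ together with a transcendence basis / generating data for $A$ over $\Qq$; the point is that $a$, being algebraic over $A$, together with the data defining it traces out an absolutely irreducible curve $C$ over (a finitely generated subfield of) $A$ — or rather, the relevant fact is that $\Psi(a)$ is \emph{determined} by $\Psi$ on $A$ together with the algebraic relation $P(u,a)=0$. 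Indeed the term $\Psi^n_{sym}(g_1(u),\dots,g_n(u))$ with $g_i(u)$ the (normalized) coefficients of $\prod(X-a_j)$ over the conjugates $a_j$ of $a$ over $A$ lies in $A$ (as $A$ is a substructure closed under these terms!), and it equals $\sum_j\Psi(a_j)$. So the multiset $\{\Psi(a_1),\dots,\Psi(a_n)\}$ of values of $\Psi$ on the conjugates is, via Newton's identities applied to all the $\Psi^k_{sym}$'s of the various symmetric powers, determined by elements of $A$ — hence transported correctly by $\alpha$. A Galois-theoretic argument (the conjugates $a_j$ are permuted by $\mathrm{Gal}$, and $\Psi$ is additive hence compatible with the trace) then pins down which element of that multiset is $\Psi(a)$ itself. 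This is the step I expect to be the main obstacle: making precise that ``$\Psi(a)$ is determined by $\{\Psi(a_j)\}_j$ plus the additive structure'' requires knowing there are no extra additive coincidences, and that is precisely where one invokes (3*) — if $a$ satisfied a rational linear relation with elements of $A$ over which we have no control, $\Psi(a)$ could be constrained further, but such a relation would already be visible in $A$ since $A$ is relatively algebraically closed, and additivity of $\Psi$ handles it.

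Once $\Psi(a)=\Psi(a')$ is established, compatibility with every $\Psi^n_{sym}$-term at arguments from $A(a)$ is automatic: such a term evaluates to $\sum\Psi(\text{roots})$, the roots lie in the algebraic closure, $\Psi$ vanishes off $F$, and the field isomorphism $A(a)\to A'(a')$ extends canonically to the algebraic closures matching up $\Psi$-values (by the same conjugate-tracking argument, applied now inside $\mathrm{acl}$). Thus $\alpha$ extended to $A(a)\to A'(a')$ is an $L_+$-embedding of substructures, both closed under $\kappa$ and $\Psi_{sym}$. Finally I iterate: well-order a transcendence-free enumeration of $\mathrm{acl}(A)$ over $A$, extend one element at a time, and take unions at limits (the enriched structure on a union of a chain is the union of the structures, since all basic terms involve only finitely many arguments). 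This produces the desired isomorphism $\mathrm{acl}(A)\to\mathrm{acl}(A')$, and by symmetry of the construction it is onto $\mathrm{acl}(A')$.
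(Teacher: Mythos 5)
Your reduction to the crux is right: after extending $\alpha$ to a field isomorphism $\mathrm{acl}(A)\to\mathrm{acl}(A')$ (the standard Ax/PAC step, which you handle correctly), everything comes down to showing the two characters can be matched up by a field automorphism over $A$. You also correctly observe that the $\Psi^n_{sym}$-terms at tuples from $A$ (which $\alpha$ preserves --- note they are preserved, not ``in $A$'', since they are $\Cc$-valued) determine, for each algebraic $a$, the \emph{multiset} of values $\{\Psi(a_1),\dots,\Psi(a_n)\}$ on the Galois conjugates: your Newton's-identities remark works because the power sums $\sum_j\Psi(a_j)^k=\sum_j\Psi(ka_j)$ are again $\Psi_{sym}$-terms at $A$-tuples.

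But the step you yourself flag as ``the main obstacle'' is exactly where the proof is missing, and your proposed fix does not work. Knowing the multiset $\{\Psi(a_j)\}_j$ for each single $a$ does not tell you which conjugate carries which value, and --- more seriously --- processing one element at a time gives you no control over whether the choices made for different elements $a,b$ cohere: you might need $\sigma_1\in\mathrm{Gal}$ to match the values at the conjugates of $a$ and a different $\sigma_2$ for $b$, and then an additive relation such as $\Psi(a+b)=\Psi(a)\Psi(b)$ is not preserved. Invoking (3*) (\lemref{modelsdense}) cannot repair this: that axiom concerns density of $\Psi$ on generic points of curves not lying in rational hyperplanes, and says nothing about the values of $\Psi$ at specific algebraic elements; indeed the paper's proof of this lemma makes no use of it. What is actually needed is a \emph{simultaneous} statement: the whole vector $(\Psi(b))_{b\in\beta}$, for a finite Galois-invariant set $\beta$, is determined up to the diagonal action of $G=\mathrm{Aut}(A(\beta)/A)$ by the $\Psi_{sym}$-data over $A$. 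The paper gets this by compactness of $\mathrm{Aut}(\mathrm{acl}(A)/A)$ together with a product-of-linear-forms trick: set $f=\sum_{b\in\beta}\Psi(b)T_b$, $f'=\sum_{b\in\beta}\Psi'(b)T_b$, and compare $\prod_{g\in G}g\cdot f$ with $\prod_{g\in G}g\cdot f'$. Each coefficient of the product is a $\Zz$-combination of orbit sums $\sum_{h\in G}\Psi(hb)$ with $b$ an $n$-fold sum of elements of $\beta$ (using $\Psi(b_1)\cdots\Psi(b_n)=\Psi(b_1+\cdots+b_n)$), and each such orbit sum is a $\Psi^n_{sym}$-term at a tuple from $A$; hence the two products coincide, so $f=g\cdot f'$ for some $g\in G$, and $g^{-1}$ is the required automorphism. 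This global matching is the idea your proposal lacks.
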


\prf  Since $A$,$A'$ are closed under the $\kappa$-functions, they are definably closed as substructures of the pseudo-finite fields $F,F'$. 
     We can extend $\alpha $  further to a {\em field}
isomorphism $acl(A) \to acl(A')$.  Thus we may assume $acl(A)=acl(A')=:C$ as fields, $A=dcl(A) \leq C$;
   $F,F'$ have two additive characters  $\Psi,\Psi'$, agreeing on $A$.  We also know that $\Psi^n_{sym} (c_1,\ldots,c_n)$
 has the same value, for $c_1,\ldots,c_n \in A$, whether computed with respect to $\Psi$ or to $\Psi'$.  We have to find in this situation a field automorphism
taking $\Psi$ to $\Psi'$.   
By compactness of $Aut(C/A)$, it suffices to find,  given a finite set $\beta$ of elements of $C$,
  an automorphism $\si$ of $C$ over $A$ such that
$\Psi'( b)  = \Psi(\si(b))$   for $b \in \beta$.     We may enlarge $\beta$ so as to be   $Aut(C/A)$-invariant.    Let $B=A(\beta)$, and $G=Aut(B/A)$.  
Introduce variables $T_b$ for $b \in \beta$.
  and obtain a $G$-action on $\Cc[T_b: b \in \beta]$
(fixing $\Cc$; $(g,T_b) \mapsto T_{  g  (b) }$).   
Consider the linear polynomials $f=\sum_{b \in \beta} \Psi (b) T_b$, $f'= \sum_{b \in \beta} \Psi'(b)  T_b$,
where $T_i$ are variables.   Let $\phi= \Pi _{g \in G} g \cdot f$, $\phi'= \Pi _{g \in G} g \cdot f'$.
These are products of linear polynomials; if we show $\phi=\phi'$, it will follow that $f$ divides $\phi'$, 
so $f = g \cdot f'$ for some $g \in G$; then $g \inv$ would be precisely the $\si$ we are after.

In order to show that $\phi=\phi'$, we compute $\phi $  explicitly  in terms of terms $\Psi_{sym}$
over $A$.  Write $\phi = \sum \phi_\nu \T^\nu$.  Then $\phi_\nu$ is a finite  linear combination, with combinatorial coefficients, of products 
$\Psi(b_{1}) \cdots \Psi(b_n)$, $n=|G|$, with $b_1,\ldots,b_n \in \beta$; we rewrite such products as 
$\Psi(b_1+ \cdots+ b_n)$.   Let $n \beta $ be the set of all $n$-fold sums of elements of $\b$;
$G$ acts naturally on $n \beta$.  
If $b=(b_1+ \cdots+ b_n) \in n \beta$ and 
 $\Psi(b_1+ \cdots+ b_n)$ occurs in $\phi_\nu$, then so will 
$\Psi(hb_1+\cdots + hb_n)$, for $h \in G$; we can thus express $\phi_\nu$ as an
integral   linear combination of certain sums $\sum_{h \in G} \Psi(hb)$, $b \in \beta$.
Write $ \Pi_{h \in G} (t-hb) = t^n +   d_1(b) t^{n-1} + \cdots + d_n(b)$.   Then each $d_i(b) \in A$, and
$\sum_{h \in G} \Psi(hb) = \Psi^n_{sym} (d_1(b),\cdots,d_n(b))$.  This expresses $\phi_\nu$
as a sum of $\Psi^n_{sym} $ applied to elements of $A$, and the same expression is valid for $\phi'_\nu$.
\eprf

\begin{prop}   \label{qe}
$\PFp$ admits  quantifier elimination, in the language   including the defined terms $\Psi^{(n)},\kappa$ of \ref{definedterms}.   \end{prop}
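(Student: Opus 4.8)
The plan is to verify the back-and-forth criterion for quantifier elimination recalled above. So let $F,F'\models\PFp$ be $\lambda^{+}$-saturated with $\lambda\geq|L_{+}|=\aleph_{0}$, let $\alpha\colon A\to A'$ be an isomorphism between substructures with $|A|\leq\lambda$ (for the language with the defined terms $\kappa,\Psi^{n}_{sym}$), and let $c\in F$; we must extend $\alpha$ to an isomorphism whose domain contains $c$. First I would invoke \lemref{qe0} to replace $\alpha$ by its extension $\acl(A)\to\acl(A')$. If $c\in\acl(A)$ we are done, so from now on $A=\acl(A)$ and $A'=\acl(A')$ are relatively algebraically closed in $F,F'$ and $c$ is transcendental over $A$.

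Next I would reduce everything to a realization statement. Let $q(x)$ be the $L_{+}$-type over $A'$ obtained by transporting $\mathrm{tp}_{L_{+}}(c/A)$ along $\alpha$. Any realization $c'\in F'$ of $q$ turns the assignment $t^{F}(\bar a,c)\mapsto t^{F'}(\alpha\bar a,c')$ (over field-valued terms $t$ and $\bar a\in A$) into a well-defined isomorphism of the substructure $\langle A,c\rangle$ generated by $A\cup\{c\}$ onto a substructure of $F'$ extending $\alpha$ and containing $c$: well-definedness and injectivity come from the recorded truth values of equalities between terms, and compatibility with $\Psi,\kappa,\Psi^{n}_{sym}$ from the recorded values of the corresponding atomic formulas. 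Since $|q|\leq|A|+\aleph_{0}\leq\lambda$ and $F'$ is $\lambda^{+}$-saturated, it is enough to show that $q$ is finitely satisfiable in $F'$. Note in passing that the purely field-theoretic ($\kappa$-language) content of $q$ is unproblematic: by Ax's quantifier elimination for $\mathrm{PF}$ the $\kappa$-isomorphism $\alpha$ is elementary in the $\kappa$-language, so $(F,A)$ and $(F',A')$ satisfy the same $\kappa$-sentences; the genuine work is to place the $\Psi$-values simultaneously with the field data.

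For the geometric step, take a finite fragment of $q$. Using the closure properties in Remark~\ref{lrem} and Stone--Weierstrass, it suffices to treat finitely many atomic conditions: prescribed approximate values for $\Psi$ and for $\Psi^{n}_{sym}$-terms at various $\kappa$-field-terms in $x$ over $A'$, together with equalities among such field-terms, all holding at $c$ over $A$ in $F$. Gather into one finite extension $L=A(c)(\theta)$ all the field elements involved — the values at $(c,\theta)$ of the occurring field-terms and all the roots feeding the $\Psi^{n}_{sym}$- and $\kappa$-terms — let $M\subseteq L$ be the $\Zz$-submodule generated by these elements together with $c$, and pick $v_{1},\dots,v_{N}\in L$ representing a $\Zz$-basis of $M/(M\cap A)$. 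Let $C\subseteq\Aa^{N}$ be the Zariski closure over $A$ of the point $(v_{1},\dots,v_{N})$. Because $A=\acl(A)$ is relatively algebraically closed in $F$, the variety $C$ is absolutely irreducible; and because the $v_{i}$ are $\Zz$-linearly independent modulo $A$, no relation $\sum m_{i}v_{i}\in A$ with $m_{i}\in\Zz$ not all zero holds, so $C$ lies in no rational hyperplane of $\Aa^{N}$. Transporting by $\alpha$ gives $C'\subseteq\Aa^{N}$ over $A'$ with the same two properties, and $\Psi$ restricted to $\{v_{1},\dots,v_{N}\}$ determines the whole finite fragment (every relevant $\Psi$- or $\Psi^{n}_{sym}$-value is an integer combination of the $\Psi(v_{i})$ times an element of $\Psi(A)$). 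By \lemref{modelsdense}, in formulation (3*) applied inside $F'$, $\Psi^{(N)}(C'(F'))$ is dense in $\T^{N}$; moreover the ``$c$-coordinate'' (the integer combination of the $v_{i}$ equal to $c$ modulo $A$) is a non-constant function on the curve $C'$, so we may take $\bar q\in C'(F')$ with $\Psi^{(N)}(\bar q)$ as close as desired to the target and with $c$-coordinate $c'$ transcendental over $A'$. The map $v_{i}\mapsto q_{i}$ is then a field homomorphism over $\alpha$ sending $c$ to the transcendental $c'$ and displacing each $\Psi$-value by an arbitrarily small amount (the combinatorial data in the atomic formulas is bounded and the connectives are uniformly continuous); and since all the roots used by the chosen $\kappa$- and $\Psi^{n}_{sym}$-terms occur among the coordinates, these terms take their intended values at $c'$. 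Choosing the error small enough satisfies the finite fragment, so $q$ is finitely satisfiable, and we are done.

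I expect the main obstacle to be the bookkeeping in the last paragraph, specifically getting the curve-density argument to produce a $c'$ that also satisfies the \emph{negative} algebraic facts of the finite fragment — conditions saying that certain finite extensions of $A'(c')$ are \emph{not} realized in $F'$ (such facts are present in $q$ as values of $\kappa$-atomic formulas, whose ``for all roots'' clause is sensitive to roots that exist in $F'$ but not in $F$). The clean way to handle this is to realize the $\kappa$-type first — legitimate since $\alpha$ is $\kappa$-elementary by Ax — and then run the density argument not on $C'$ itself but on a suitable absolutely irreducible auxiliary curve cut out inside the $\kappa$-definable locus pinned down by the finite fragment; the technical crux is producing this auxiliary curve and checking that it too lies in no rational hyperplane, so that \lemref{modelsdense}(3*) still applies. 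The reason one uses formulation (3*) rather than axiom (3) verbatim is that (3*) carries no height restriction, which the manipulations above do not preserve.
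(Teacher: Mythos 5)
Your setup, the reduction via \lemref{qe0} to relatively algebraically closed substructures, the passage to a $\Zz$-basis of the relevant additive group modulo $A$, the absolute irreducibility of the resulting curve, the verification that it lies in no rational hyperplane, and the appeal to \lemref{modelsdense} in form (3*) all track the paper's argument. But the step you yourself flag as ``the technical crux'' is a genuine gap, and your proposed repair does not close it. The point is that (3*) only asserts that $\Psi^{(N)}(C'(F'))$ is dense in $\T^N$: it hands you \emph{some} point of $C'(F')$ with prescribed approximate $\Psi$-values, with no control over the Galois-theoretic position in $F'$ of the substructure that point generates. In particular $F'$ may contain roots of the relevant polynomials over the image of $A(c)$ that have no counterpart in $F$, and then the terms $\kappa$ and $\Psi^n_{sym}$ --- whose definitions quantify over the roots lying in the ambient model --- take the wrong values at the image point. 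Your suggested fix (realize the $\kappa$-type first, then run density on an auxiliary curve inside the $\kappa$-locus) meets the same difficulty one level down: the locus cut out by a finite fragment of the $\kappa$-type is a PF-definable, not Zariski-closed, subset of $C'$, typically of positive but non-full measure, and (3*) says nothing about where on the curve the points with the desired $\Psi$-values land. The statement you would actually need is essentially \remref{modelsdense-r}, whose proof in the paper \emph{uses} quantifier elimination, so that route is circular as it stands.

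The paper's resolution is a separate device that is absent from your proposal: before running the density argument it extends the isomorphism $A\to B$ to one between \emph{full} substructures, i.e.\ substructures $D$ for which the restriction $\gal(M)\to\gal(D)$ is bijective. Such substructures exist of transcendence degree $1$ by \lemref{full}, and the extension of the isomorphism to them is achieved by a Chebotarev field-crossing amalgamation. Once the base is full, every relatively algebraically closed extension of it is again full, because a surjective endomorphism of $\hZ$ is an isomorphism; consequently the image of the field embedding produced by the density argument is \emph{forced} to be relatively algebraically closed, and only then are $\kappa$ and $\Psi^n_{sym}$ computed correctly on both sides. Without this (or some equivalent rigidity statement about $\gal$), your argument produces a field embedding preserving $\Psi$ but not an embedding of substructures for the language with the defined terms, which is what the back-and-forth criterion requires.
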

  \prf  We will use the criterion mentioned at the end of \secref{intro}, constructing an  
  isomorphism between saturated models $M,N$   by an inductive back-and-forth procedure,
  starting with an arbitrary isomorphism between   substructures.   
 (see \cite{chang-keisler}.)
 
At a given stage we have    small   substructures $A$ of   $M$ and $B$  of $N$, and an isomorphism between them, 
preserving the field structures and the $\kappa$ and $\Psi$; we are to extend it to an isomorphism of $M$ and $N$.   By \lemref{qe0}, we may take $A,B$ to be relatively algebraically
  closed, as the isomorphism extends.   It suffices to show how to extend the isomorphism by one step, so that the domain is a   small extension $A'$  of $A$ within $M$, containing a given  element $a$.  We may
  take $A'=acl(A,a)$, the relative algebraic closure of $A \union \{a\}$ in $M$.  Since $A'$ is relative algebraic closed,
  it suffices to find a field embedding $A' \to N$ over $A$, preserving $\Psi$, and whose image $B'$ is also relatively algebraically closed; 
for then the terms $\kappa$ and $\Psi^n_{sym}$ can be computed correctly within $A'$ and $B'$.   We may assume $A=B$
and the given isomorphism is the identity.

Consider finitely generated substuctures, say $A(a_1,\ldots,a_n)$ where $a=(a_1,\ldots,a_n)$  now denotes a generic point of a curve $C$ in affine n-space;
since $a \in M^n$ and $A$ is relatively algebraically closed in $M$, $C$ is absolutely irreducible.
By compactness, it suffices to find a generic $b \in C(N)$ with $\Psi^{(n)}(b) = \Psi^{(n)}(a)$.  Genericity of $b$ means here 
that $b \notin A$; by another use of  compactness it suffices to avoid finitely many points $\a_1,\ldots,\a_k \in C(A)$;  
replacing $C$ by the affine curve $C \setminus \{\a_1,\ldots,\a_k\}$, we see that it suffices to find any $b \in C(N)$
with $\Psi^{(n)}(b) = \Psi^{(n)}(a)$.

We  can replace $(a_1,\ldots,a_n)$ by  $\Qq$-linearly independent elements
$b_1,\ldots,b_m$ generating the same additive group as $(a_1,\ldots,a_n)$    (This uses the fact that if $a_1=\sum m_jb_j$ with $m_j \in \Zz$ then $\Psi(a_1) = \Pi  \Psi(b_j)^{m_j}$ in the circle group.)
Now by \lemref{modelsdense},     there exists an $m$-tuple $(c_1,\ldots,c_m)$ of $C(N)$   with the same values 
$\Psi(b_i)=\Psi(c_i)$.  By saturation of $N$, we can indeed find an embedding of $A'$ into $N$, preserving $\Psi$.  
If we knew that   the image of $A'$ is relatively algebraically closed, it would follow that $\Psi_{sym}$  and $\kappa$
are preserved too.  At this point we have no basis to claim this, however.

Call a substructure $D$ of $M$  {\em full} in $M$ if  the restriction homomorphism $\gal(M) \to \gal(D)$ is bijective 
(where $\gal$ denotes the absolute Galois group).   If  $D$ is full in $N$,  and $D \leq D'$, the composition 
$Gal(N) \to Gal(D') \to Gal(D)$ is injective, hence so is the restriction $\gal(N) \to \gal(D')$.   In case $D'$ is relatively algebraically closed it is also surjective, so $D'$ is full.  Assume $A',B'$ contain full substructures $A_0,B_0$.  Let $B''$ be the 
relative algebraic closure of $B'$ in $N$.  
Then $\hZ \cong \gal(M) \cong \gal(A') \cong \gal(B')$.   Consider the homomorphisms $\gal(N) \to \gal(B') \to \gal(B_0)$.
Since the composition $\gal(N) \to \gal(B_0)$ is an  isomorphism, $\gal(B') \to \gal(B_0) $ is surjective.  But a surjective
self-map of $\hat{Z}$ is an isomorphism.  Thus $\gal(B') \to \gal(B_0) $ is bijective; and since $\gal(N) \to \gal(B_0)$
is surjective, $\gal(N) \to \gal(B')$ must be surjective too.   Thus $B'$ is relatively algebraically closed in $N$.

We have shown so far, combining the smallness of the Galois group with \lemref{modelsdense},  that if $D \leq M$ is small
(compared to the saturation)  and full, and $D'$ is an extension of transcendence degree  in which $D$ is full too,
then $D'$ embeds into $M$ as a full (and relatively algebraically closed) substructure.  Thus we will be done as soon as we extend the original isomorphism $A \to B$ to one between full substructures (that we took to be the identity).  

  By \lemref{full}, there exists a full 
subfield $A_0$ of $M$ of transcendence degree $1$ over $\Qq$, and likewise $B_0$ in $N$. Let $A'$ be the
relative algebraic closure in $M$ of $A_0$, and   $B'$ of $B_0$.
 We note in passing that the  Chebotarev field crossing argument does not immediately work   for fields with additive character, since the $\Psi$-structure on a field amalgam is not uniquely determined.   However the argument does show that 
{\em some} amalgam $A''$ of $A',B'$ over $A=B$  exists, with Galois group $\widehat{Z}$, and $A',B'$ relatively algebraically closed in $A''$.    By the previous paragraph,
since $A'$ is full in $M$, $A''$ embeds into $M$ over $A'$.  Likewise $A''$ embeds into $M$ over $B'$.  Thus while
$A',B'$ need not be isomorphic, we can skip them and obtain an isomorphism $A'' \to B''$ with $B'' \leq N$, so that we have an isomorphism between full substructures as required.   
  \eprf

\begin{rem}\label{modelsdense-r}  \lemref{modelsdense} can   be extended as follows:  Let $M$ be an $\aleph_1$-saturated model of $\PFp$,
$M_0 \leq M$ countable, relatively algebraically closed, and $p$ any type over $M_0$ in the language of pseudo-finite fields.  Assume $p(x_1,\ldots,x_n)$  
implies the $\Qq$-linear independence of the images of $x_1,\ldots,x_n$ in $M/M_0$.  Let $\alpha_i \in T$.  Then there exists 
a realization $a=(a_1,\ldots,a_n) \in M$ with $\Psi(a_i)=\alpha_i$.  

Namely, let $M_1'$ be a model of PF extending $M_0$, and with $a=(a_1,\ldots,a_n)$ realizing $p$.  Extend $\Psi$
to a homomorphism on $M_1'$ with $\Psi(a_i)=\alpha_i$; this gives a structure $M_1$ for the language of $\PFp$.
 Extend $M_1$ iteratively to a structure satisfying $\PFp$ (1,2,3*)   (\lemref{2.7}).   Then  $M_1 \models \PFp$.
 By quantifier elimination over relatively algebraically closed substructures, $M_1$ embeds elementarily into $M$ over $M_0$.
\end{rem}

  \begin{lem}  \label{full}  Let $T$ be a theory of pseudo-finite fields of characteristic $0$.  Then there exists $F \models T$
and a subfield $F_1$ of $F$ of transcendence degree $1$ over $\Qq$,  such that the restriction $Gal(F) \to \gal(F_1)$ is an isomorphism.\end{lem}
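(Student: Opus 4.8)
The plan is to reduce to the case that $T$ is complete and then to \emph{construct} a model of $T$ of transcendence degree $1$ over $\Qq$; that model will serve as both $F$ and $F_1$, the restriction $\gal(F)\to\gal(F_1)$ being the identity. First, replacing $T$ by one of its completions (each completion is again a theory of pseudo-finite fields of characteristic $0$, and a model of the completion is a model of $T$), assume $T$ complete. Pick $F_0\models T$ and set $K_0:=F_0\cap\Qq^a$, its field of absolute numbers. If $\sigma$ topologically generates $\gal(F_0)\cong\hZ$, then $\sigma$ preserves $\Qq^a$ and fixes $K_0$, and since $F_0$ is perfect every element of $\Qq^a$ fixed by $\gal(F_0)$ already lies in $F_0$; hence $K_0$ is the fixed field of $\overline{\langle\sigma|_{\Qq^a}\rangle}$, so $\gal(K_0)=\overline{\langle\rho_0\rangle}$ is procyclic with $\rho_0:=\sigma|_{\Qq^a}$. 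By Ax's theorem, any pseudo-finite field $F$ of characteristic $0$ with $F\cap\Qq^a$ isomorphic over $\Qq$ to $K_0$ models $T$. So it suffices to build a pseudo-finite $F$ with $F\cap\Qq^a=K_0$ and $\operatorname{trdeg}(F/\Qq)=1$.

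Next I would fix a transcendental $t$ and work inside $\overline{\Qq(t)}=\overline{K_0(t)}$. Since $K_0(t)/K_0$ is purely transcendental, $\Qq^a\cap K_0(t)=K_0$ and $\Qq^a(t)=\Qq^a\cdot K_0(t)$, so restriction to $\Qq^a$ gives an exact sequence
\[ 1\longrightarrow\gal(\Qq^a(t))\longrightarrow\gal(K_0(t))\overset{r}{\longrightarrow}\gal(K_0)\longrightarrow 1, \]
in which $K_0(t)$ and $\Qq^a(t)$ are countable Hilbertian fields ($L(t)$ is Hilbertian for every field $L$). The goal becomes: find $\tau\in\gal(K_0(t))$ such that (i) $r(\tau)=\rho_0$; (ii) $\overline{\langle\tau\rangle}\cong\hZ$; and (iii) $F:=\operatorname{Fix}(\overline{\langle\tau\rangle})$ is PAC. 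Given such $\tau$, the field $F$ is PAC, perfect (characteristic $0$), with $\gal(F)=\overline{\langle\tau\rangle}\cong\hZ$, hence pseudo-finite; $F\cap\Qq^a$ is the fixed field of $r(\overline{\langle\tau\rangle})=\overline{\langle\rho_0\rangle}=\gal(K_0)$, namely $K_0$; and $F$ is algebraic over $K_0(t)$, so $\operatorname{trdeg}(F/\Qq)=1$. This yields the lemma.

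Conditions (i) and (ii) I can arrange together. The fibre $r^{-1}(\rho_0)$ is a coset of $\gal(\Qq^a(t))$. For each prime $\ell$ there is a regular $\Zz_\ell$-extension of $\Qq(t)$ (a $\Zz_\ell$-tower of cyclic covers of $\Pp^1$, with branch loci chosen disjoint for different $\ell$), which after base change to $K_0(t)$ gives $\pi_\ell\colon\gal(K_0(t))\twoheadrightarrow\Zz_\ell$ still surjective on $\gal(\Qq^a(t))=\ker r$; independence over distinct $\ell$ makes $(\pi_\ell)_\ell\colon\ker r\to\prod_\ell\Zz_\ell=\hZ$ surjective. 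Choosing a lift $\tau_0$ of $\rho_0$ and $\nu\in\ker r$ with $\pi_\ell(\tau_0\nu)$ a generator of $\Zz_\ell$ for all $\ell$, the element $\tau:=\tau_0\nu$ satisfies (i), and $\overline{\langle\tau\rangle}$ is procyclic and surjects onto every $\Zz_\ell$, hence onto $\hZ$, hence is $\cong\hZ$: (ii) holds, and moreover the set of such $\tau$ is of full Haar measure in the coset $r^{-1}(\rho_0)$.

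The crux — and the step I expect to be the main obstacle — is (iii): one must keep $\operatorname{Fix}(\tau)$ PAC while $\tau$ is pinned to the coset $r^{-1}(\rho_0)$, which has Haar measure zero in $\gal(K_0(t))$ once $\gal(K_0)$ is infinite, so the bare Fried--Jarden theorem (PAC for almost all $\tau\in\gal(K_0(t))$) does not apply directly. What is needed is its relative form: for Haar-almost-all $\tau$ in the coset $r^{-1}(\rho_0)$ — equivalently, for almost all lifts of $\rho_0$ — the field $\operatorname{Fix}(\tau)$ is PAC. One runs the usual argument relativised to the coset: for each of the countably many geometrically irreducible varieties $V$ over $K_0(t)$ the set of $\tau$ in the coset for which $V$ acquires a rational point over $\operatorname{Fix}(\tau)$ has full measure there — using Hilbertianity of $\Qq^a(t)$ and a density estimate carried out inside the coset — and one intersects these full-measure sets with the full-measure set from step (i)--(ii). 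Granting the relative Fried--Jarden statement, a $\tau$ with (i)--(iii) exists and the construction is complete; everything else is routine bookkeeping with procyclic groups and constant-field extensions.
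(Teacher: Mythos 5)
Your route is genuinely different from the paper's and, as written, has two real gaps. The first is quantitative: in steps (i)--(ii) you use \emph{one} surjection $\pi_\ell\colon\gal(K_0(t))\to\Zz_\ell$ per prime $\ell$ and claim that the set of $\tau$ in the coset with every $\pi_\ell(\tau)$ a generator has full Haar measure. It does not: its measure is $\prod_\ell(1-\ell^{-1})=0$ (the topological generators of $\hZ$ form a null set). Such $\tau$ exist, but your proof ends by \emph{intersecting} this set with the conull set from (iii), and a null set intersected with a conull set can be empty. To repair this you would need, for each modulus, infinitely many pairwise linearly disjoint cyclic extensions regular over the constants (as in the standard proof that $\overline{\langle\sigma\rangle}\cong\hZ$ almost surely), not a single $\Zz_\ell$-tower. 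The second gap is the one you flag yourself: the ``relative Fried--Jarden'' statement (iii) is the entire content of the lemma on your approach, and it is genuinely stronger than Jarden's theorem, since the coset $r^{-1}(\rho_0)$ is null. The sketch ``run the usual argument inside the coset'' conceals the substantive point: for each absolutely irreducible $V$ over a finite extension $L$ of $K_0(t)$ one must produce infinitely many finite extensions $L_i/L$ with $V(L_i)\neq\emptyset$ that are linearly disjoint from one another \emph{and from $\Qq^a\cdot L$ over $L$} (otherwise the Borel--Cantelli events are not independent of the conditioning $\tau|_{\Qq^a}=\rho_0$). This amounts to choosing Hilbert specializations that stay irreducible over the algebraic closure of the constants; it is plausible for function fields, but it is not established in your write-up, and no reference is given.

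Note also that you have made the problem strictly harder than the lemma requires: the statement does not ask for $F_1=F$, i.e.\ for a model of $T$ of transcendence degree $1$, only for a transcendence-degree-$1$ subfield $F_1$ onto whose absolute Galois group $\gal(F)$ restricts isomorphically. The paper exploits this slack: it builds $F_1$ explicitly inside the Puiseux series field $L=\bigcup_n\Qq^a((t^{1/n}))$ as the fixed field of $\si=\si_0\si_1$, where $\si_0$ lifts the given automorphism of $\Qq^a$ coefficientwise and $\si_1$ generates the geometric $\hZ$ acting on the $t^{1/n}$; one checks $\gal(F_1)\cong\hZ$ by a torsion-freeness computation, with \emph{no} PAC property needed for $F_1$. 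The pseudo-finite $F$ is then obtained for free by embedding $(L,\si)$ into a model of ACFA and taking the fixed field, and Ax's theorem gives $F\models T$. That argument avoids all measure theory and all Hilbertianity. Your construction, if completed, would prove a stronger and independently interesting statement, but as it stands the key measure-theoretic inputs are either incorrect as argued or merely postulated.
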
    

\prf  It is easy to give a direct proof, but let us use Ax's theorem instead.  Let $M \models T$, and $F_0 = \Qq^{a} \meet M$.  Then  $F_0$ is the fixed field of some $\si_0 \in Aut(\Qq^{a})$.
 Let $L=\lim_n \Qq^a((t^{1/n}))$ be the field of Puiseux series over $\Qq^a$; so $L$ is algebraically closed.  Lift $\si_0$
 to a (continuous) automorphism of $L$ fixing $t^{1/n}$ for each $n$.  Let $\si_1$ be a topological generator
 of $Aut(L/\Qq_a(t))$.  Let $\si = \si_0 \si_1$, and let $F_1$ be the fixed field of $\si$.  
 $Gal(F_1)$ is generated by $\si$.  Since $\widehat{Z}$ is free, there is a unique homomorphism
$\hat{Z} \to Gal(F_1)$ with $1 \mapsto \si$, and we denote it by $m \mapsto \si^m$.  
 Since $\si_0,\si_1$ commute, if $m \in \widehat{\Zz}$ and $\si^m=1$ then  $ \si_0^m \si_1^m=(\si_0 \si_1)^m =1$ so
 $\si_1^m$ fixes each $t^{1/n}$; from this it follows that $m=0$.  Thus $\gal(F_1)=\widehat{\Zz}$.  Let $(L',\si') \models ACFA$
 with $(L,\si) \leq (L',\si')$.  Let $F=Fix(\si') \leq L'$.  Then $F$ is pseudo-finite and $\gal(F) \to \gal(F_1)$ is an isomorphism.
According to Ax, $F \models T$.
 \eprf

    \ssec{Completions} \label{completions} As in Ax's case, the completions are determined by the `absolute numbers' as a field with additive character.  In more detail,  let $G=  Aut(\Qq^a/\Qq)$ be the absolute Galois group of $\Qq$.
   It follows from quantifier-elimination that the completions of $\PFp$ are determined by pairs $(\si,\psi)$, where $\si \in G$ and $\psi: Fix(\si) \to  \T$ is a homomorphism, 
   extending $x \mapsto exp(2 \pi i x)$ on $\Zz$, and with $\si(\omega) =\omega^k$, $\psi(k/n)=exp(2 \pi i /n)$
   for $\omega^n=1$.    Two such pairs are equivalent iff  
they are isomorphic as fields with additive characters; and any pair $(\si,\psi)$ can  occur.   (It is easy to find by an inductive limit argument, a model of $\PFp$ containing the given structure on $Fix(\si)$.)
      
 \ssec{Universal theory}  $\PFp$ is the model completion of the  theory described in axioms (1,2);
 it is easy to extend any model of (the universal part of) this theory to a model of $\PFp$, using \ref{modelsdense} (3*).   
     
 \ssec{Decidability} $\PFp$ is decidable.  Given a sentence $\psi$ and $\e>0$, one can first look for
 a quantifier-free sentence $\psi'$ and a proof that the values of $\psi,\psi'$ are within $\e/2$.
 Now $\psi'$ concerns a number field $L$, that one can take to be Galois over $\Qq$, with Galois group $G$;
 and is determined by an  element $g \in G$, and a homomorphism
 $\psi:  Fix(g) \to \Cc^*$.  Actually only the values of $\psi$ on finitely many elements $e_1,\ldots,e_k$
 are concerned.   Now $e_1,\ldots,e_k$ generate a finitely generated subgroup of $(L,+)$,
 isomorphic to $\Zz^l$ for some $l$, and we may replace them by a lattice basis for the group they generate; so we  may assume they are $\Qq$-linearly independent.  In this case, $\psi(e_i) \in \T$ can be chosen arbitrarily and independently.  Using this, we can determine the set of possible values of $\psi'$, to $\e/2$-accuracy.   
  
\ssec{Standard exponential characters} 

The {\em standard interpretation} of $\Psi$ for $\Ff_p=\Zz/p \Zz$ is by definition the   character  map 
\[\Psi_p:  \ \  n+ p \Zz \mapsto exp(2\pi i  {\frac{n}{p}})  \]

On  $\Ff_q$ (with $q$ a power of $p$) we set 
\[\Psi_q:= \Psi_p(Tr(a))\]
 where $Tr$ is the trace from $\Ff_q$ to $\Ff_p$.
 
 $\Ff_q^+$ will denote the field $\Ff_q$ with the additive character $\Psi_q$.

   Any other additive character on $\Ff_q$
 has the form $\Psi_q(c x)$, for a unique $c \in \Ff_q$; 
  this is a version of  the statement that  a finite abelian group is isomorphic to its dual.    
Hence the group $\hat{A}$ of all additive characters, along with the evaluation map $\hat{A} \times A \to \T$,
  can be interpreted   in the language naming only one character.

 \begin{prop}\label{standard}   The characteristic $0$ asymptotic theory    
of the standard finite fields-with-additive-character $\Ff_q^+$   is  precisely $\PFp$.    It is equal to  the 
 characteristic $0$ asymptotic theory of all finite fields with a nontrivial additive character; and also
 to the  characteristic $0$ asymptotic theory of all prime fields with a nontrivial additive character.  
 \end{prop}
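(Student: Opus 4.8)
The plan is to derive the three equalities from \lemref{axiomshold} for one inclusion and from a Chebotarev‐plus‐equidistribution argument for the converse, the whole reduction being organized through the quantifier elimination of \propref{qe}. For the easy inclusion: all three classes consist of finite fields carrying a \emph{nontrivial} additive character, so by \lemref{axiomshold} every characteristic‐$0$ ultraproduct of members satisfies $\PFp$; since the axioms of $\PFp$ are closed conditions on the values of sentences — notably scheme $(3)$, which asserts that a certain $\sup$‐sentence is $\ge 0$ — they pass to the closure defining the asymptotic theory, so every model of any of the three asymptotic theories is a model of $\PFp$.

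For the converse it suffices to show: for each of the three classes $\mathcal C$, each $M\models\PFp$, each sentence $\phi$ and each $\e>0$, there is some $\uA\in\mathcal C$ with $|\phi^{\uA}-\phi^M|<\e$. I would first apply \propref{qe} to replace $\phi$ by a quantifier‐free sentence, whose value in $M$ depends only on finitely much of the absolute part of $M$: a finite Galois extension $L/\Qq$ (taken large enough to contain the roots of unity that occur), the restriction $\tau$ of the distinguished automorphism $\sigma\in\gal(\Qq^a/\Qq)$ to $L$, and the values of $\Psi$ at finitely many elements $e_1,\dots,e_k$ of $\mathrm{Fix}(\tau)=\mathrm{Fix}(\sigma)\cap L$ — which, as in \remref{lrem}(i), one may take $\Qq$‐linearly independent, with prescribed values $\alpha_i\in\T$. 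So the task reduces to producing, for each such finite datum and each $\e$, a single $\uA\in\mathcal C$ whose absolute part carries the Galois element $\tau$ and sends each $e_i$ to within $\e$ of $\alpha_i$; an ultrafilter over the finite data then assembles these into a characteristic‐$0$ ultraproduct of members of $\mathcal C$ whose absolute part is isomorphic to that of $M$, hence, by \propref{qe} again, elementarily equivalent to $M$.

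The construction of $\uA$ is where the content lies. For the field structure I would invoke the Chebotarev density theorem: for a suitable finite Galois $L$ there are infinitely many primes $p$ with $\mathrm{Frob}_p$ acting on $L$ (with respect to a chosen place above $p$) exactly as $\tau$, and for such $p$ each $e_i$ reduces to some $\ell_i\in\{0,\dots,p-1\}$ in the prime field. For the character I would use that it is only pinned down up to a scaling: for prime fields with an arbitrary nontrivial character one takes $x\mapsto\Psi_p(cx)$; for $\{\Ff_q^+\}$ one takes $q=p^m$ with $m\equiv1\pmod{\mathrm{ord}(\tau)}$ (so that $\mathrm{Frob}_q|_L=\tau$) and $m\equiv c\pmod p$ by the Chinese remainder theorem, whereupon $\Psi_q$ restricts on $\Ff_p$ to $x\mapsto\Psi_p(mx)=\Psi_p(cx)$ — in both cases $\Psi(\bar e_i)=\exp(2\pi ic\ell_i/p)$ with $c$ a free parameter. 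The decisive estimate is then that, as $c$ ranges over a short interval of length $p^{\delta}$ (some fixed $0<\delta<1$) consisting of integers coprime to the exponent of $\gal(L/\Qq)$, the tuples $(\exp(2\pi ic\ell_i/p))_{i\le k}$ become $\e$‐dense in $\T^k$ once $p$ is large: $\Qq$‐linear independence of the $e_i$ forces $\sum_i m_ie_i\neq 0$ for every nonzero integer vector $(m_i)$, hence $\sum_i m_i\ell_i\not\equiv0\pmod p$ for every nonzero $(m_i)$ of height $\le M$ once $p$ exceeds a bound depending on $M$ and $L$, and an Erd\H{o}s--Tur\'an / geometry‐of‐numbers estimate turns this absence of small relations into $\e$‐density. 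Choosing $c$ in this range with $(\exp(2\pi ic\ell_i/p))_i$ within $\e$ of $(\alpha_i)_i$ gives $\uA$; the coprimality restriction on $c$ (together with the freedom in $\mathrm{Frob}_p$ on the cyclotomic part) and the bound $c=o(p)$ are precisely what force the reductions of the roots of unity in $L$ and of the rationals to carry the values dictated by the compatibility relations of \S\ref{completions}.

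The main obstacle is this density step, together with the observation that it genuinely needs an extra parameter — the character scaling $c$, or the exponent $m$ of a \emph{composite} prime power $q=p^m$; this is why the standard character on all prime \emph{powers} works, whereas the standard character on prime fields alone does not (the asymptotic theory of $\{\Ff_p^+:p\text{ prime}\}$ strictly contains $\PFp$, see \secref{prime}, since demanding equidistribution mod $p$ of residues of fixed algebraic numbers is a Duke--Friedlander--Iwaniec–type problem, open in general). The remaining work is bookkeeping: checking that the range of the free parameter can be chosen to respect simultaneously the structure forced on $\Qq$ and on roots of unity by $\sigma$ and the freedom needed on $e_1,\dots,e_k$ — handled by fixing $\mathrm{Frob}_p$ on a large cyclotomic field first and invoking density only in the remaining coordinates.
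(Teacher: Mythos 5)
Your architecture coincides with the paper's: \lemref{axiomshold} for the easy inclusion, then quantifier elimination (\propref{qe}) to reduce the converse to realizing a finite fragment of the absolute part, Chebotarev for the Galois datum, the scaling $q=p^m$ (resp.\ $\chi=\Psi_p(c\,\cdot)$) to adjust the character, and an equidistribution statement for $c\mapsto(\exp(2\pi i c\ell_i/p))_i$. The gap is in that last step, which you rightly identify as the content. You let $c$ range over a short interval of length $p^{\delta}$ and invoke Erd\H{o}s--Tur\'an, justified only by $\sum_i m_i\ell_i\not\equiv 0\pmod p$ for bounded nonzero $(m_i)$. That hypothesis is too weak: the geometric sum $\sum_{c\le N}\exp(2\pi i c\langle m,\ell\rangle/p)$ is $O(\Vert\langle m,\ell\rangle/p\Vert^{-1})$, hence $o(N)$ with $N=p^{\delta}$ only when the residue $\langle m,\ell\rangle$ keeps distance $\gg p^{1-\delta}$ from $0$ modulo $p$, and mere nonvanishing gives no lower bound whatsoever --- a priori $\sum_i m_ie_i$ could reduce to a very small integer. (One can extract such a bound from the integrality of the norm of $\sum_i m_ie_i-r$ for $|r|\le p^{1-\delta}$, but only for $\delta$ close to $1$ depending on $[L:\Qq]$, and this argument is missing from your sketch.)

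The deeper point is that the short interval is neither needed nor wanted. The paper's \lemref{weyl+} lets the scaling parameter range over \emph{all} residues mod $p$, and proves density softly via the ultraproduct version \lemref{weyl+pf}: in an ultraproduct, $\Psi(Fc)$ is a closed subgroup of $\T^{b}$, hence proper only if annihilated by some character $z^{m}$, i.e.\ only if $\Psi(F(m\cdot c))=1$ with $m\cdot c\neq 0$ --- impossible since then $F(m\cdot c)=F$ and $\Psi(F)=\T$. No discrepancy estimate is required, and the full range is exactly what your own reduction demands: the targets $\alpha_i$ at $\Qq$-linearly independent $e_i$ are arbitrary, and the $e_i$ may include rationals (an ultraproduct of $\Ff_{p^{e}}^{+}$ with $e\bmod p\sim p/2$ has $\Psi(1)=-1$), whereas $c=o(p)$ confines $\Psi_p(c\ell)$ for rational $\ell$ to a neighbourhood of the roots of unity and makes such ultraproducts unreachable. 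The compatibility you are trying to enforce at rationals and roots of unity is imposed in the paper not by shrinking the range of $c$ but by adjoining $1/k$ to the tuple $(\ell_i)$ and prescribing its target like any other coordinate; and the one congruence genuinely needed on the exponent (coprimality to $\mathrm{ord}(\tau)$, preserving the splitting behaviour) costs nothing, since replacing $c$ by $c+tp$ changes no value $\Psi_p(c\ell)$.
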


\begin{proof}  We have seen that $\PFp$ eliminates quantifiers and hence becomes complete upon a description of 
$\Psi$ on $\Qq^a$.   We thus have only to show that for any $\si \in Aut(\Qq^a)$ and any homomorphism 
 $\Psi: Fix(\si)  \to  \T$ vanishing on $\Zz$, 
   there exists an $\Ff_q^+$ approximating $(Fix(\si),\Psi)$.
   
The following explicit statement is a little stronger than what we need:  
   
{\em Let $f(X)$ be an irreducible monic polynomial of degree $d $ over $\Qq$, $K=\Qq[X]/(f)$.
Assume the Galois hull $L$ of $K$ is cyclic of degree $d'$  over $K$, and let $r_1,\ldots,r_b$ be 
elements of $K$ such that   $1,r_1,\ldots,r_b$ are $\Qq$-linearly
independent; also let $r_0$ be the reciprocal of some integer $k>0$.      Let $g_i \in \Qq[X]$ be a polynomial
whose image modulo $(f)$  is $r_i$.   
 Let $U$ be a nonempty open subset of $\T^b$, and let $u \in \T$ satisfy $u^k=1$.    Then there exist
infinitely many primes $p$ and such that $f$ has a root $a$ modulo $p$,  and the splitting field of $f$
over $\Ff_p$ has degree $d'$ over $\Ff_p$; and such that moreover there exists $e$ with 
 $(\Psi_{p^e}( g_1(a)),\ldots,\Psi_{p^e}(g_b(a))) \in U$ and such that $\Psi_{p^e} (1/k) = u$.  }

  Using Cebotarev, we can find an infinite sequence  of primes    $p_m$ and $a_m  \in \Ff_{p_m}$,
 such that      the splitting field of $f$
over $\Ff_{p_m}$ has degree $d'$ over $\Ff_{p_m}$.      We will let $q_m = p_m^{e_m}$ for an appropriate sequence of integers $e_m$. 
Then $\Psi_{q_m}(g_i(a_m)) = \Psi_{p_m} ( e_m g_i(a_m))$, and $\Psi_{q_m}(1/k)=\Psi_{p_m}(e_m/k) $.     Let $c_m =(g_1(a_m),\ldots,g_b(a_m))$.  Then it suffices to show given $\e>0$ that for some infinite set of indices $m$, we can find
$e_m$ such that $\Psi_{p_m}(e_m c_m)$ lies in a prescribed open subset of $\T^b$, and $\Psi_{p_m}(e_m/k)$ is a prescribed $k$'th root of unity.      This is  true by
\lemref{weyl+}, noting that as $m \to \infty$, the  $g_i(a_m)$   avoid any given finite number of $\Zz$-linear 
relations, even modulo $1$
(since the $g_i(a)$ and $1$ are linearly independent over $\Qq$.)  At the same time, we can make sure that $e_m$ is relatively
prime to $d'$; thus  the splitting field of $f$ over $\Ff_{p_m^{e_m}}$ still has degree $d'$ over $\Ff_{p_m^{e_m}}$.

The proof for the class of prime fields with an arbitrary nontrivial additive character is completely parallel; at the end,
having found the sequences $p_m,e_m$,  in place of moving to $\Ff_{p_m^{e_m}}$ we remain with $\Ff_{p_m}$, but
multiply the standard character by $e_m$.
\end{proof}

\begin{lem}\lbl{weyl+}   Let    $c_m \in \Ff_{p_m}^b$ be a sequence of $b$-tuples from prime fields of increasing size.   Assume:     
 for any given nonzero rational vector $(\alpha_1,\ldots,\alpha_b)$, for almost all $m$ it is not the case 
that $\sum _{i=1}^b \alpha_i c_{m,i} = 0$.   Let $U$ be a nonempty open subset of $\T^b$.  Also fix $k \in \Nn$ 
and $l \in (\Zz/k \Zz)^*$.
  Then for arbitrarily large $m$, for some   $e_m \in \Nn$,  $\Psi_{p_m}(e_m c_m) \in U$.
  Morever we can choose $e_m = l \pmod k$.
  \end{lem}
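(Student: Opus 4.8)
The plan is to prove this by a Weyl equidistribution argument on the torus $\T^b$. For each $m$ let $\mu_m$ be the push-forward to $\T^b$ of the uniform probability measure on $\Zz/p_m\Zz$ under $e\mapsto\Psi_{p_m}(e c_m)=(\exp(2\pi i\, e c_{m,1}/p_m),\ldots,\exp(2\pi i\, e c_{m,b}/p_m))$, a probability measure supported on the finite cyclic subgroup $H_m=\langle\Psi_{p_m}(c_m)\rangle$ of $\T^b$ (applying the hypothesis to the standard basis vectors shows $H_m$ has order $p_m$ for $m$ large, though we will not need this). The main claim is that $\mu_m\to\lambda$ weakly, where $\lambda$ is normalized Haar measure on $\T^b$; note that $\Psi_{p_m}(e c_m)$ depends only on $e\bmod p_m$, so once this is known the statement follows by lifting.

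To verify the claim I would apply Weyl's criterion: it suffices that for every nonzero $\mathbf{n}=(n_1,\ldots,n_b)\in\Zz^b$ the Fourier coefficient $\widehat{\mu_m}(\mathbf{n})=\int_{\T^b}\prod_i z_i^{n_i}\,d\mu_m$ tends to $0$. By definition of $\mu_m$ this coefficient is the complete exponential sum
\[
\widehat{\mu_m}(\mathbf{n})=\frac{1}{p_m}\sum_{e=0}^{p_m-1}\exp\!\bigl(2\pi i\, e\,\langle\mathbf{n},c_m\rangle/p_m\bigr),
\]
where $\langle\mathbf{n},c_m\rangle=\sum_i n_i c_{m,i}$ is read as an integer (equivalently, as an element of $\Ff_{p_m}$, which is all that matters for the exponential). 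A geometric series computation gives that this sum equals $1$ if $\langle\mathbf{n},c_m\rangle\equiv 0\pmod{p_m}$ and $0$ otherwise. Since $\mathbf{n}$ is in particular a nonzero rational vector, the hypothesis forces $\langle\mathbf{n},c_m\rangle\neq 0$ in $\Ff_{p_m}$ for all but finitely many $m$; hence $\widehat{\mu_m}(\mathbf{n})=0$ eventually, and in particular $\widehat{\mu_m}(\mathbf{n})\to0$. As the characters span a uniformly dense subspace of $C(\T^b)$, this gives $\mu_m\to\lambda$ weakly.

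It then remains to extract $e_m$. Choose a continuous $f\colon\T^b\to[0,\infty)$ with support contained in $U$ and $\int f\,d\lambda>0$ (possible since $U$ is open and nonempty). Weak convergence gives $\int f\,d\mu_m\to\int f\,d\lambda>0$, so $\int f\,d\mu_m>0$ for all sufficiently large $m$; for such $m$, since $f$ vanishes off $U$, there must be some $e\in\{0,\ldots,p_m-1\}$ with $\Psi_{p_m}(e c_m)\in U$. For $m$ large we also have $\gcd(k,p_m)=1$, so by the Chinese Remainder Theorem we may pick $e_m\in\Nn$ with $e_m\equiv e\pmod{p_m}$ and $e_m\equiv l\pmod k$. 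Because $\Psi_{p_m}(e_m c_m)$ depends only on $e_m\bmod p_m$, we get $\Psi_{p_m}(e_m c_m)=\Psi_{p_m}(e c_m)\in U$; and this holds for all sufficiently large $m$, in particular for arbitrarily large $m$.

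I do not expect a serious obstacle: the only computation is the elementary evaluation of a complete exponential sum, and all arithmetic content has been absorbed into the non-degeneracy hypothesis on the $c_m$, which is exactly what is needed for Weyl's criterion. (In the application in \propref{standard} this hypothesis for $c_{m,i}=g_i(a_m)$ comes from the $\Qq$-linear independence of $1,r_1,\ldots,r_b$.) The one point worth stating carefully is that ``$\sum\alpha_i c_{m,i}=0$'' in the hypothesis is to be read in $\Ff_{p_m}$ — equivalently, $\sum\alpha_i c_{m,i}/p_m\in\Zz$ after clearing denominators — which is precisely the condition appearing in the evaluation of $\widehat{\mu_m}(\mathbf{n})$.
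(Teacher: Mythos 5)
Your proof is correct. It is worth noting how it sits relative to the paper: the paper's one-line proof of this lemma offers two routes --- ``an effective form of Weyl's criterion'' (not spelled out) and a deduction from the pseudo-finite Lemma~\ref{weyl+pf} (which \emph{is} proved, by observing that $\Psi(Fc)$ is a closed subgroup of $\T^n$ in an $\aleph_0$-saturated ultraproduct and that a proper closed subgroup must be killed by some nonzero integer character). You have carried out the first route in full: the key simplification you exploit is that the relevant Weyl sums are \emph{complete} exponential sums over $\Zz/p_m\Zz$, so each Fourier coefficient $\widehat{\mu_m}(\mathbf{n})$ is exactly $0$ or $1$ by a geometric series, and the non-degeneracy hypothesis (correctly read in $\Ff_{p_m}$, after clearing denominators to reduce to integer vectors) forces it to vanish for large $m$; no genuine estimate is needed, and equidistribution, not just density, comes out. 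Your handling of the congruence condition via CRT modulo $kp_m$ is exactly the paper's remark that the ``moreover'' is free because $e_m$ matters only modulo $p_m$ and $p_m$ is a unit mod $k$. The trade-off between the two routes: yours is elementary and quantitative (it yields weak convergence of $\mu_m$ to Haar measure, which is more than the stated density claim), while the paper's ultraproduct argument is softer, avoids any computation, and is the version actually reusable elsewhere in the paper in the pseudo-finite setting.
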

 \begin{proof}   This follows from an effective form of  Weyl's criterion for equidistribution; but it
 also follows from the pseudo-finite version \lemref{weyl+pf}, that we will prove separately.  
 Note that the `moreover' is obvious, since changing $e_m$ by a multiple of $p_m$ does not
 affect $\Psi_{p_m}(e_m c_m)$, and $p_m$ is a unit $\bmod k$.  
   \end{proof}

 \begin{lem}\lbl{weyl+pf}    Let $(F,+,\cdot,\Psi)$ be an  
  ultraproduct of 
 enriched finite fields $\Ff_q^+$.   Let ${n} \in \Nn, c \in F^{n}$ and assume 
  $m \cdot c \neq 0$ for $m \in \Zz^{n} \m (0)$.    Then $\Psi(Fc)=\T^{n}$.  \end{lem}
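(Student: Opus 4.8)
The plan is to combine the density statement \lemref{modelsdense}(3*) with the saturation of the ultraproduct $F$. Density by itself will not do: $\Psi^{(n)}(Fc)$ is a \emph{subgroup} of $\T^n$ (being the image of the additive homomorphism $a \mapsto \Psi^{(n)}(ac)$), and a dense subgroup of $\T^n$ may be proper, e.g.\ $\Qq^n/\Zz^n$; so the finite-field input has to be invoked once more, via saturation, in order to close up the image. The place where one could naively go wrong is in thinking that density already suffices, so this last step is the genuinely load-bearing one.

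First I would note that $c \neq 0$ (else $m \cdot c = 0$ for every $m$, which is excluded unless $n = 0$), so $c_i \neq 0$ for some $i$, and I would take $C$ to be the line $\{t c : t \in \Aa^1\} \subset \Aa^n$, that is, the Zariski closure of the image of $t \mapsto (tc_1,\ldots,tc_n)$. This is an absolutely irreducible affine curve defined over $\Qq(c_1,\ldots,c_n) \subseteq F$, and $C(F) = Fc$ (any $F$-rational point $x$ of the line is $tc$ with $t = x_i/c_i \in F$). The one geometric fact to check is that $C$ is contained in no rational hyperplane: if $\sum_i m_i X_i = b$ vanished on $C$ with $m_i \in \Zz$ and $b \in F$, then $t\bigl(\sum_i m_i c_i\bigr) = b$ for all $t \in F$; taking $t = 0$ forces $b = 0$, and then $\sum_i m_i c_i = 0$, hence $m = 0$ by hypothesis. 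Since $F \models \PFp$ by \lemref{axiomshold}, \lemref{modelsdense}(3*) applies to $C$ and shows that $\Psi^{(n)}(Fc) = \Psi^{(n)}(C(F))$ is dense in $\T^n$.

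To pass from dense to all of $\T^n$, fix $\alpha \in \T^n$. In the single variable $x$, over the finitely many parameters $c_1,\ldots,c_n$, the countable set of conditions $\bigl\{\, d\bigl(\Psi^{(n)}(xc),\,\alpha\bigr) \leq 1/k \ :\ k \in \Nn \,\bigr\}$ is finitely satisfiable, since by density there is, for every $k$, some $a$ with $\Psi^{(n)}(ac)$ within $1/k$ of $\alpha$. As $F$, being an ultraproduct, is $\aleph_1$-saturated, this set is realized by some $a \in F$; then $\Psi^{(n)}(ac) = \alpha$, so $\alpha \in \Psi^{(n)}(Fc)$. Since $\alpha$ was arbitrary, $\Psi(Fc) = \T^n$. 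The only point that requires care is this reduction of surjectivity to density through saturation — equivalently, the fact that the image of the definable map $x \mapsto \Psi^{(n)}(xc)$ into the compact space $\T^n$ is closed; everything else is routine bookkeeping about the line $C$ and rational hyperplanes.
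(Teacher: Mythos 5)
Your argument is correct, but it is not the route the paper takes, and the difference is worth spelling out. You obtain density of $\Psi^{(n)}(Fc)$ by viewing $Fc$ as the $F$-points of the line $\{tc\}$ — an absolutely irreducible curve lying in no rational hyperplane, precisely because $m\cdot c\neq 0$ for nonzero integer $m$ — and then invoking \lemref{axiomshold} together with \lemref{modelsdense}(3*); saturation then closes up the image. This is sound and non-circular (\lemref{axiomshold} and \lemref{modelsdense} are established independently of \lemref{weyl+pf}), but it imports the Weil bound, which is overkill for a line. The paper instead argues entirely group-theoretically inside the ultraproduct: $\Psi^{(n)}(Fc)$ is the image of the additive group $F$ under the homomorphism $a\mapsto\Psi^{(n)}(ac)$, hence a \emph{subgroup} of $\T^n$, closed by saturation; a proper closed subgroup of $\T^n$ is annihilated by some nontrivial character $z\mapsto z^m$, which would give $\Psi(Fc')=1$ for $c'=m\cdot c\neq 0$, hence $\Psi(F)=\Psi(Fc')=1$, contradicting $\Psi(F)=\T$. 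In effect the paper replaces your density-plus-closure step by Pontryagin duality for $\T^n$ plus orthogonality of characters in one variable, which is exactly the elementary substitute for the Weil bound on a line. Both proofs use saturation for the same purpose (the image is closed), and your observation that density alone cannot suffice — a dense subgroup of $\T^n$ may be proper — is exactly the right diagnosis of where the work lies. One point worth making explicit on either route: the hypothesis that $m\cdot c\neq 0$ for all nonzero $m\in\Zz^{n}$ already forces $F$ to have characteristic zero and the ultrafilter to be nonprincipal (take $m=(p,0,\ldots,0)$), which is what licenses your appeal to \lemref{axiomshold} and, in the paper's version, the identities $Fc'=F$ and $\Psi(F)=\T$.
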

 
 \begin{proof}  In any case $\Psi( F c)$ is a closed subgroup of $\T^{n}$, using $\aleph_0$-saturation of the ultraproduct; so if it is not all of $\T^{n}$ then for some $m \in \Zz^{n} \m (0)$  we have $  \Psi(F c)^m =1 $,
or $\Psi(F c') = 1$ where $c' = m \cdot c \in F$.  But by assumption, $c' \neq 0$ so $Fc'=F$, and $\Psi(F)=T$. 
\end{proof}

 Note we could not simply apply   Weyl's theorem directly to $\Psi(c)$, since that {\em may} fall into some rational hyperplane.

  \ssec{Simplicity}
  
\begin{prop} \label{amalgamation} $\PFp$ admits   $n$- amalgamation for algebraically independent
 algebraically closed substructures.      By the case $n=3$, $\PFp$ is a simple theory.  \end{prop}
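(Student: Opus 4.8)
The plan is to imitate the field-amalgam construction from the proof of \propref{qe}, adding only the bookkeeping needed to carry the additive character along. The key preliminary observation is that on an algebraically closed substructure $A$, the defined term $\Psi^n_{sym}(g_1(a),\dots,g_n(a))$ with $a\in A$ is simply $\sum_i\Psi(\alpha_i)$ with all roots $\alpha_i\in A$; so the entire $\PFp$-structure on $A$ is the field structure together with the restriction of $\Psi$ to $A$. Hence, by \propref{qe}, amalgamating a coherent, algebraically independent system $(A_s)_{s\subsetneq\{1,\dots,n\}}$ of algebraically closed substructures of models of $\PFp$ reduces to: (i) producing a field amalgam $A^\ast$ of the $A_s$ with the right Galois data; (ii) extending the given characters $\Psi_s$ to a single additive character on $A^\ast$; and (iii) realizing $(A^\ast,\Psi^\ast)$ inside the monster over each $A_s$.

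For (i) I would invoke the Chebotarev field-crossing argument already used in \propref{qe} and \lemref{full}: it produces an algebraically closed field $A^\ast$ containing the $A_s$, in which each $A_s$ is relatively algebraically closed, with $\gal(A^\ast)\cong\hZ$ (so that the restriction $\gal(A^\ast)\to\gal(A_s)$ is an isomorphism, i.e.\ each $A_s$ is full in $A^\ast$), and with the maximal proper pieces linearly disjoint over the lower strata. For (ii), strong linear disjointness of an algebraically independent system of algebraically closed fields guarantees that every $\Zz$-linear relation holding in $(A^\ast,+)$ among elements drawn from distinct maximal pieces $A_s$ is a consequence of relations already holding within the overlaps; since the $\Psi_s$ agree on overlaps, they therefore glue to a homomorphism $\Psi^\flat$ from the additive span $\sum_s A_s\leq(A^\ast,+)$ to $\T$. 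As $(A^\ast,+)$ is torsion-free and $\T$ is divisible, hence injective as a $\Zz$-module, $\Psi^\flat$ extends to a homomorphism $\Psi^\ast\colon(A^\ast,+)\to\T$. Now $(A^\ast,\Psi^\ast)$ satisfies axioms (1),(2), so by \lemref{modelsdense} it extends to a model of $\PFp$; and, exactly as in the final paragraph of the proof of \propref{qe} — using fullness of the $A_s$ in $A^\ast$ and quantifier elimination over relatively algebraically closed substructures, together with the usual bookkeeping that makes these embeddings of an independent system mutually consistent — this model embeds into the monster over all the $A_s$ simultaneously. The maximal pieces remain linearly disjoint, hence independent, in the amalgam. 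Taking $n=3$ gives the independence theorem over algebraically closed sets; the remaining Kim--Pillay axioms for algebraic independence are inherited from the pseudo-finite field reduct, so $\PFp$ is simple.

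The main obstacle is step (ii), and it has the two facets already foreshadowed in \propref{qe}. First, one must genuinely verify that no new additive $\Zz$-linear relation among the pieces appears in $A^\ast$ beyond those forced by the overlaps; this is the statement that $\sum_s A_s$ is the colimit (over the lower strata) of the $A_s$ in $\Qq$-vector spaces, and for $n\geq 3$ it requires the full strength of algebraic independence of the system, not merely pairwise linear disjointness, so it should be spelled out. Second, since the $\Psi$-structure on a field amalgam is not determined, one must check that arranging $\gal(A^\ast)\cong\hZ$ and extending the character do not interfere; they do not, because the former concerns only algebraic extensions while the latter is carried out on the abstract additive group — but this must be tracked through the simultaneous-embedding step, which is where the argument has any real content.
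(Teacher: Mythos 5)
Your proposal follows essentially the same route as the paper: reduce to extending the characters over the field amalgam (which the paper takes as given from the PF case in \cite{pac}), observe that the only obstruction would be a new $\Zz$-linear relation among elements of distinct maximal faces not already forced by the lower strata, and then realize the resulting structure in the monster via quantifier elimination (the paper does this via \remref{modelsdense-r}). The one step you flag as needing to be spelled out --- the joint $\Qq$-linear independence, over all maximal proper $s$, of the chosen representatives $e_{s,i}$ of a basis of $A(s)/B(s)$ --- is exactly where the paper invokes the internalization argument of \cite{pac}: a dependence among the $e_{\{2,3\},i}$ over $A(1,2)\cup A(1,3)$ would, upon internalizing $A(1)$ into the base, yield one over $A(2)\cup A(3)$, contradicting the choice of basis.
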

 \prf  In fact a stronger statement is true, namely $n$-amalgamation over $PF$:  let $A$ be any functor on the partially ordered set $P(n)$ of subsets of $[n]=\{1,\ldots,n\}$
 into algebraically closed substructures of a model of $PF$ (see \cite{cigha}).  Let $P(n)^-= \{s \subset n: |s|<n\}$.
 Assume further that for $s \in P(n)^-$, a character $\Psi_s$ is given on $A(s)$, so that $\Psi_s | A(s') = \Psi_{s'}$ when
 $s' \subset s$.  Then there exists $\Psi_{[n]}:A([n]) \to T$ completing the data to a functor into models of the universal part of $\PFp$.
  
  The proof is 
 similar to the proof for pseudo-finite fields \cite{pac}, but moving first to the $\Qq$-linear span of the $n-2$-skeleton,   arguing that the values of $\Psi$ are determined thus far, and that beyond this one has $\Qq$-linear independence
and thus sufficient freedom in the choice of $\Psi$.  More precisely,   
for $|s|=n-1$, 
let $B(s)$ be the $\Qq$-span of $\union_{s'<s} A(s')$.  Let 
 $(e_{s,i}:i \in I_s)$  be a subset of  $A(s)$, mapping to a $\Qq$-basis of $A(s)/B(s)$.  
  We have to show consistency of $\Psi(e_{s,i})=\alpha_{s,i}$, for any choice of 
 $\alpha_{s,i} \in \T$.  
 By an internalization argument as in \cite{pac}, we see that the elements $(e_{s,i}: s \in P(n), |s|=n-1, i \in I_s)$ are 
 $\Qq$-  linearly independent as a set.   (e.g. any linear dependence among the elements $e_{{2,3},i}$ over
 $A(1,2) \union A(1,3)$ would imply, by internalizing $A(1)$ into the base, a similar linear dependence over
 $A(2) \union A(3)$, contradicting the choice of the $E_{{2,3},i}$.)
  Hence by \remref{modelsdense-r}, the PF-type of $A([n])$ along with $\Psi(e_{s,i})=\alpha_{s,i}$ is consistent with $PF+$.
   \eprf

  For a definable group $G$ in a saturated structure and a base structure $A$, $G^0_A$ denotes the intersection of all $A$-definable subgroups
  of finite index; whereas $G^{00}_A$ is the smallest subgroup of $G$ cut out by a partial type over $A$,
  and with {\em bounded} index, i.e. the quotient remains bounded in any elementary extension; in this case the quotient
  admits a natural compact topological group structure.    
    It has long  been open, for discrete first order theories that are  simple, whether $G^{00}_0 $ must equal $G^{0}_0$.  
  $\PFp$    gives a natural {\em continuous-logic} example of a definable group $G$ in a simple theory, where
$G^0_A=G$ for all $A$,  but $ G^{00}_0 \neq G$, and where $G^{00}_A$ does not stabilize with $A$.  Indeed, let 
$G$ be the additive group, and let 
$(a_i: i \in I)$
be a $\Qq$-linearly independent subset of $G(A)$.  Then we have a surjective homomorphism $G \to \Tt^A$, 
$g \mapsto (\Psi(a_i g): i \in I)$; the kernel is clearly $A$-$\bigwedge$-definable.    

\ssec{Galois group}  \label{td}
The compact (Lascar-Kim-Pillay) absolute Galois group  $G_T$ of a theory $T$, whether in discrete or   continuous logic, is defined as follows.  A (hyper)imaginary sort has the form $S=K^n/E$, with $E$ an  $\infty$-definable equivalence relation (without parameters).   If  (for some $K$, or all sufficiently saturated $K$)
$S$ does not grow upon replacing $K$ by an elementary extension, we say that $S$ is {\em bounded}.  One then defines a compact Hausdorff topology, the
 {\em logic topology}, 
on $S$, by taking projections of definable sets (with parameters) to be the basic closed sets.    $G_T$ is by definition the automorphism group of the family of all compact sorts (the permutations that extend to an automorphism of some model.)  It is itself naturally a compact Hausdorff group.  

If $T$ admits $3$-amalgamation over algebraically closed sets in the discrete logic sense, then
$G_T$  is totally disconnected.  For $\PFp$ this is the case by \propref{amalgamation}.    See however \ref{acfa+}.

\ssec{The theory with a space of characters}  Recall the theory $\Pfpw$ from the introduction.  
When a nonzero constant symbol is added to the dual sort $\widehat{F}$, it is clear that $\Pfpw$ becomes bi-interpretable with$\Pfp$.  The results of \thmref{summary} thus generalize easily to $\Pfpw$.   Let us only review the completeness statement,
that becomes somewhat stronger:   the completions of $\Pfpw$ are entirely determined by the field of absolute numbers,
i.e. by the restriction to the field language.  

To see this, let $M,N \models \Pfpw$ with $N$ $\aleph_1$-saturated, and assume $M \meet \Qq^a= N \meet \Qq^a=K$.
   Choose $\chi \in \widehat{F}(M)$
and $\psi \in \widehat{F}(N)$.    Let $(b_i: i \in I)$ be a $\Qq$-basis for $K$.   
Use saturation of $N$ to find $d \in N$ such that $\psi(db_i)=\chi(b_i)$ for $i \in I$.  (For any finite $ \{\b_1,\cdots,\b_k\} \subset I$,
existence of such a $d$ with respect to $\{\b_1,\cdots,\b_k\}$ follows from the axioms, since the image of $\Aa^1$ under
$x \mapsto (x \b_1,\cdots,x \b_k)$ is not contained in any $\Qq$-linear affine space.)  Let $\psi' (x) = \psi(dx)$.
Then by completeness of $\Pfp$ we see that $(N,\psi')$ and $(M,\chi)$ are elementarily equivalent in the language
of $\Pfp$.  It follows that $M,N$
are  elementarily equivalent in the language of $\Pfpw$.

\begin{cor}   The theory of the class of prime fields, enriched by their dual groups and evaluation map, is precisely
$\Pfpw$.  \end{cor}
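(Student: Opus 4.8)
The plan is to derive this from \propref{standard} together with the bi-interpretability, discussed just above, between $\Pfpw$ (with a nonzero constant $d$ adjoined in the dual sort $\widehat{F}$) and $\Pfp$. Concretely I would establish two inclusions of value-sets: (a) every nonprincipal ultraproduct of the enriched prime fields $(\Ff_p,\widehat{\Ff_p},\psi)$ is a model of $\Pfpw$; and (b) conversely, for every $M\models\Pfpw$ and every sentence $\phi$ of the language of $\Pfpw$, the value $\phi^M$ lies in the closure of $\{\phi^{(\Ff_p,\widehat{\Ff_p},\psi)}\colon p \text{ large}\}$. The usual compactness manipulations relating ultraproduct limits to asymptotic value-sets then show that the characteristic-zero asymptotic theory of the class is $\Pfpw$.

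For (a): the axioms of $\Pfpw$ that are universal or first-order in the dual sort --- that $\psi$ is $\Zz$-bilinear, that $\psi(cx,y)=\psi(x,cy)$, that $\widehat{F}$ is an abelian group on which $G_m$ acts transitively away from $0$, and the non-degeneracy of the pairing --- hold in each finite $\Ff_p$, where $\widehat{\Ff_p}$ is the genuine dual and the pairing is perfect, and therefore in any ultraproduct. The one remaining axiom scheme asserts $(F,\Psi_c)\models\Pfp$ for every $c\in\widehat{F}\smallsetminus\{0\}$; representing such $c$ as $(c_p)_{\mathcal U}$ with $c_p\neq 0$ (so $\Psi_{c_p}$ a nontrivial additive character of $\Ff_p$) for almost all $p$, one has $(F,\Psi_c)=\prod_{\mathcal U}(\Ff_p,\Psi_{c_p})$ as structures for the language of $\Pfp$, and \lemref{axiomshold} gives $(F,\Psi_c)\models\Pfp$.

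For (b): fix $M\models\Pfpw$; the dual sort is nontrivial (since $\Pfp$ entails $\Psi\neq1$, by axiom group (3)), so choose $d\in\widehat{F}(M)\smallsetminus\{0\}$, whence $(M,\Psi_d)\models\Pfp$ by the scheme just used. The structure $(M,\widehat{F},\psi)$ is interpretable in $(M,\Psi_d)$: the map $a\mapsto a\cdot d$ (the $G_m$-action, extended by $0\mapsto0$) is a bijection $F\to\widehat{F}$ under which $\psi(a\cdot d,x)=\psi(x,a\cdot d)=\Psi_d(ax)$ and the addition of $\widehat{F}$ pulls back to $+$ on $F$, bijectivity using non-degeneracy of the pairing. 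This is the same interpretation that recovers $(\Ff_p,\widehat{\Ff_p},\psi)$ from $(\Ff_p,\chi)$ for any nontrivial character $\chi$ (all such giving isomorphic structures). Hence each sentence $\phi$ of the $\Pfpw$-language has an $L_+$-translate $\phi^\sharp$ with $\phi^M=(\phi^\sharp)^{(M,\Psi_d)}$ and $\phi^{(\Ff_p,\widehat{\Ff_p},\psi)}=(\phi^\sharp)^{(\Ff_p,\chi)}$; applying \propref{standard} to $\phi^\sharp$ puts $(\phi^\sharp)^{(M,\Psi_d)}$ in the closure of the values $(\phi^\sharp)^{(\Ff_p,\chi)}$ over large $p$, which is precisely the statement (b).

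The main obstacle is the verification in (a) that the scheme ``$(F,\Psi_c)\models\Pfp$ for every nonzero $c$'' survives the ultraproduct: one cannot transfer the $\Pfp$-axioms (notably the AE group (3), which genuinely fails over finite fields) instance by instance, and must instead recognize each reduct $(F,\Psi_c)$ as an ultraproduct of nontrivially-charactered finite fields and invoke \lemref{axiomshold}. The rest --- that the interpretation in (b) matches the finite-field one on the nose, and that neither the choice of $d$ nor that of the nontrivial $\chi$ on $\Ff_p$ matters (by transitivity of the $G_m$-action) --- is routine.
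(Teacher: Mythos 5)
Your argument is correct, and it rests on the same pillars as the paper's---the interpretation of $\Pfpw$ in $\Pfp$ via $a\mapsto a\cdot d$, \lemref{axiomshold} for the ultraproduct direction, and \propref{standard} for the converse---but it is organized differently. The paper derives the corollary from the strengthened completeness statement proved in the paragraph immediately preceding it: two models of $\Pfpw$ with the same absolute number field are elementarily equivalent, shown by using saturation to find a scalar $d$ with $\psi(db_i)=\chi(b_i)$ on a $\Qq$-basis of the absolute numbers and then invoking completeness of $\Pfp$; given that, the corollary follows because every possible absolute number field occurs in an ultraproduct of prime fields. You bypass that matching argument entirely: you translate sentences through the interpretation and let the ``arbitrary nontrivial character'' clause of \propref{standard} do the matching, exploiting the fact that the enriched structure $(\Ff_p,\widehat{\Ff_p},\psi)$ is recovered identically from any nontrivial $\chi$. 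This is a legitimate and slightly more economical route to the corollary as stated; what it does not give is the paper's stronger by-product that completions of $\Pfpw$ depend only on the absolute numbers as a pure field.

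Two small repairs. First, your justification that $\widehat{F}(M)\smallsetminus\{0\}$ is nonempty (``since $\Pfp$ entails $\Psi\neq1$'') is circular: if $\widehat{F}=\{0\}$, the scheme ``$(F,\Psi_c)\models\Pfp$ for $c\neq0$'' is vacuous. Nontriviality of the dual sort has to be read as part of $\Pfpw$ itself, as is implicit in the paper's remark that $\Pfpw$ becomes bi-interpretable with $\Pfp$ after adjoining a nonzero constant $d$. Second, for $a\mapsto a\cdot d$ to be an additive bijection transporting the pairing, the point you need is that $0$ is the only element of $\widehat{F}$ inducing the trivial character; this follows from the scheme ``$(F,\Psi_c)\models\Pfp$ for all $c\neq0$'' together with bilinearity and the identity $\psi(cx,y)=\psi(x,cy)$, and deserves to be said explicitly rather than attributed to an unstated ``non-degeneracy'' axiom.
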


\ssec{The theory in probability logic}  \label{expectation} An alternative and more conceptual axiomatization appears if one uses probability logic;
specifically the expectation logic in the sense of \cite{approx}.  We will only evoke it in passing.

 We continue to work in continuous logic as sketched in the introduction.  {\em Probability logic}, superimposed
 upon this, consists of additional syntactical operators $E_x$ behaving formally like quantifiers; thus
 if $\phi(x,y_1,\cdots,y_m)$ is interpreted as a function $F^{m+1} \to \Cc$ with bound $N$, then 
  $E_x \phi (x,y_1,\cdots,y_m)$ has free variables among $y_1,\ldots,y_m$, and is interpreted as a function
  $F^m \to \Cc$ with the same bound, intended to denote the exectatation of $\phi$ with respect to $x$.   The 
  probability axioms are:
  
  \begin{enumerate}
\item  $E_x(1) =1$ for the constant function   $1$ (viewed as a function of any set of variables);
\item   $E_x(\phi+\phi')= E_x(\phi)+E_x(\phi')$,  and \\ 
  $E_x(\psi \cdot \phi) = \psi\cdot E_x(\phi)$ when $x$ is not free in $\psi$;
\item  $E_x(|\phi|) \geq 0$.  
 \item$E_x E_{x'} = E_{x'} E_x$ 
\end{enumerate}

Axioms (1-3) ensure that a model carries a definable Keisler measure $\mu$, and $E_x \phi $ is the $\mu$-expectation of $\phi$.    
Axiom (4), asserting the commutativity of the operators $E_x,E_{x'}$,  is the model-theoretic Fubini rule.
We need an additional version of Fubini that we formulate for expansions of fields.  Let $C$ be an absolutely irreducible curve,
and let $f,g$ be two nonconstant regular maps on $C$.     Also let $\phi$ be a $\Cc$-valued definable function on $C$.
For $x \in F$, let $f_* \phi (x) = \sum_{y \in C, f(y)=x} \phi(y)$, the sum being taken over $F$-points of $C$.   
Similarly define $g_* \phi$.       \begin{enumerate} \setcounter{enumi}{4}  
 \item    $E_x f_* \phi =   E_x g_* \phi $  \end{enumerate}
This allows us to define $E_C \phi$ to be the common value of $E_x f_* \phi$, for any nonconstant regular $f$.  This 
 version is suitable for (pseudo-)finite fields, where no Jacobian term is needed.    (This extends   uniquely to an integration theory on varieties, see \cite{oleron} 5.14; but only the case of curves will be needed below.)

Now for the language of rings with an additive character, we can formulate the following axioms:
 
  \begin{enumerate}[label=(\alph*)]
\item   $|\Psi(x)|$ = 1
\item  $\Psi(x+y) = \Psi(x)\Psi(y)$, $\Psi(0)=1$.
\item  For any absolutely irreducible affine curve $C$, $E_C 1 = 1$ and $E_C \Psi = 0$.  
\end{enumerate}

This implies the main axiom scheme of $\Pfp$:   if $C \subset \Aa^n$ is an absolutely irreducible affine curve, not contained
in any $\Qq$-linear hyperplane, let $Y \subset \Tt^n$ be the image of $\Psi^{(n)}$ on a saturated model.  Then $Y$ is 
the support of a measure $\mu$ on $\Tt^n$, such that $\int \psi \theta = E_C \theta \circ \Psi^{(n)}$ for any continuous
function $\theta$ on $\Pfp$.  The $\mu$- integral of  any nontrivial character $z_1^{m_1} \cdots z_k ^{m_k}$ vanishes
on $\Tt^n$, while the $\mu$-integral of $1$ is $1$; so $\mu$ is the Haar measure, and $Y= \Tt^n$.    Then $\Pfp$ implies
 elimination of sup and min quantifiers, while Axiom (c) determines the underlying Keisler measure.
We leave the details to the interested reader.

    \end{section}
   \begin{section}{Definable integration, and comparison to PF}

 \label{sec4}

Let $F$ be an ultraproduct of finite fields   with an additive characer, $F_i$, and let $V$ be an absolutely irreducible variety over $F$.
 Then for almost all $i$, $V(F_i)$ we have the   normalized counting measure $\mu_i$ on $V(F_i)$; the integral 
of $\psi$  under this measure is $\frac{\sum_{v \in V(F_i) } \psi(v) } {|V(F_i)|}$.  
The pseudo-finite 
measure $\mu_V$ is obtained as an ultraproduct of these normalized counting measures.
 
\begin{prop}  The pseudo-finite counting measure $\mu_V$  is definable.  
In other words if $\phi(x,y)$, $y=(y_1,\ldots,y_m)$ is a formula then the map
\[ y \mapsto \int \phi(x,y) d\mu_V(x) \]
is a definable map in the sense of continuous logic.  Moreover when $V$ varies in a definable family of definable sets, $\mu_V$ is uniformly definable.  
\end{prop}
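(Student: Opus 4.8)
The plan is to reduce the statement to a push-forward computation along a finite map and then invoke the quantifier-elimination theorem \propref{qe} together with the closure properties in \remref{lrem}. First I would set up the situation: fix an absolutely irreducible variety $V \subset \Aa^N$ of dimension $r$ over $F$, and a formula $\phi(x,y)$. After replacing $\phi$ by a uniform approximation (allowed by the continuous-logic notion of definability), I may assume $\phi$ is a basic formula, i.e. of the form $\Psi^n_{sym}(g_1(x,y),\ldots,g_n(x,y))$ with the $g_i$ basic PF-definable functions, or more generally a continuous function composed with finitely many such, which by \remref{lrem}(vii) is again (up to $\epsilon$) basic. Thus it suffices to show that $y \mapsto \int \Psi^n_{sym}(g_1(x,y),\ldots,g_n(x,y))\, d\mu_V(x)$ is definable, uniformly in definable families of $V$.

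The key reduction is to the case of curves, using a fibering argument. Choose a generic linear projection $\pi : V \to \Aa^r$; over the generic fibre this realizes $V$ as a finite cover of $\Aa^r$ of some degree $d$. One then wants to say $\int_V \theta\, d\mu_V = \int_{\Aa^r} \big(\tfrac1d \pi_* \theta\big)\, d\mu_{\Aa^r}$, where $\pi_* \theta(z) = \sum_{v \in V, \pi(v)=z} \theta(v)$; this is exactly the pseudo-finite Fubini/push-forward identity recorded in axiom group (5) of \secref{expectation} (no Jacobian appears over finite fields), valid after discarding a lower-dimensional definable locus of bad fibres, which contributes $0$ to the normalized measure in the limit. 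The function $\pi_* \theta$ is again definable: when $\theta = \Psi^n_{sym}(g_1,\ldots,g_n)$, summing over a finite fibre and using \remref{lrem}(iv)--(v) (sums of $\Psi$ over PF-definable finite families, with combinatorial multiplicities) shows $\pi_*\theta$ is again a basic formula. Iterating on $r$, one is reduced to integrating over affine space $\Aa^1$, i.e. over $F$ itself with its normalized counting measure.

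For the base case $\int_F \theta(x)\, d\mu_F(x)$ with $\theta$ a basic formula in $x$ (and parameters $y$): here one uses that $\mu_F$ is the pseudo-finite measure on the affine line, together with axiom group (3)/(c) — the averaging statement $E_C \Psi = 0$ for absolutely irreducible curves. Concretely, writing $\theta(x,y)$ via defined terms and splitting the $x$-line according to the (PF-definable, Ax-decidable) combinatorial data of the polynomials involved, the integral is computed by Ax-style quantifier elimination for PF on the "PF-skeleton" of the formula, while the $\Psi$-contributions are handled by the density/equidistribution input: on each piece either $x$ ranges $\Qq$-linearly generically, forcing the $\Psi$-average to vanish (by \lemref{weyl+pf} and axiom (c)), or the piece is itself of lower dimension and negligible, or the relevant $\Psi$-argument is forced into a rational affine subspace and the average is an explicit finite weighted sum of Lebesgue-type integrals over rational subtori as in \thmref{summary}(3). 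In all cases the answer is visibly a definable function of $y$ (a polynomial expression in $\Psi^n_{sym}$ of PF-definable functions of $y$), and the whole construction is uniform in $y$ and in definable families of $V$ because every step — the generic projection, the bad locus, the combinatorial decomposition, Ax's QE for PF — is uniform.

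The main obstacle I expect is \textbf{the uniformity of the fibering and of the negligible loci}: one must choose the projection $\pi$ and control the locus of fibres of the "wrong" cardinality, and the locus where $V$ fails to be absolutely irreducible or the linear-independence hypotheses fail, \emph{uniformly as $V$ ranges over a definable family}. This is where one leans on the fact that all these conditions are themselves uniformly PF-definable (by Ax's theorem and elementary algebraic geometry in families), so that the decomposition of $F^m$ into finitely many pieces on each of which a clean formula holds is itself uniform; and on the fact that in the ultraproduct the lower-dimensional loci have measure $0$, so their exact cardinality never needs to be pinned down, only bounded. Once uniform definability of the decomposition is in hand, the remaining steps are the routine closure computations of \remref{lrem} and the equidistribution lemmas already established.
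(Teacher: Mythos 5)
Your strategy is essentially the paper's: approximate by basic formulas via quantifier elimination and \remref{lrem}(vii), reduce by Fubini to one-dimensional integration, realize $\Psi^n_{sym}$ as a push-forward of $\Psi(x_1)$ along a finite cover (an auxiliary curve), and evaluate the resulting one-dimensional integral using the Weil bound. Your remarks on uniformity also match what the paper relies on (definability of dimension and of absolute irreducibility in families, negligibility of lower-dimensional loci). Two points, however, need repair.

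First, the normalization in your push-forward identity is wrong. For absolutely irreducible $V$ of dimension $r$ and a generic finite projection $\pi\colon V\to\Aa^r$ of degree $d$, the average number of \emph{rational} points per fibre is $1$, not $d$: Lang--Weil gives $|V(\Ff_q)|\sim q^r$, not $d q^r$ (compare $y^2=x$ over $\Aa^1$, where half the fibres are empty). So the identity is $\int_V\theta\,d\mu_V=\int_{\Aa^r}\pi_*\theta\,d\mu_{\Aa^r}$ with no factor $\tfrac1d$, consistent with axiom (5) of \secref{expectation}; as written your value is off by $d$. Second, your base case substitutes a vague trichotomy for the operative dichotomy. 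After unwinding $\Psi^n_{sym}$, one integrates $\Psi(x_1)$ over a PF-definable subset of an auxiliary curve; decomposing into absolutely irreducible components, the only question is whether $x_1$ is \emph{constant} on the component (integral $=\Psi(v)$ times the PF-measure of the piece, which is definable by Chatzidakis--van den Dries--Macintyre) or not (integral $=0$ by the Weil bound). Whether the component lies in a rational hyperplane is irrelevant here: that hypothesis governs the image of $\Psi^{(n)}$ (axiom (3), \propref{images}), not the vanishing of the average of a single character, and \lemref{weyl+pf} is an image statement rather than an averaging statement, so it cannot "force the $\Psi$-average to vanish". Appealing to \thmref{summary}(3) at this point is also backwards, since that description of push-forward measures is derived after, and from, the definability of $\mu_V$.
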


\prf   It will be more convenient to show definability for a different normalization, namely for the ultraproduct
of the counting measure on $V$ divided by  $|F_i| ^{\dim(V)}$.   The dimension of $V$ is definable in definable families, and one can then obtain the probability measure 
upon dividing by $\int 1$. 

The statement  reduces using Fubini to the case of affine curves $C_b \subset \Aa^n$ varying in some definable family.   Using quantifier elimination and \ref{lrem} (vii), it suffices to integrate $\Psi^n_{sym}(c_1,\ldots,c_k)$,
where the $c_i$ are PF-definable functions of the variables.  By the definition of $\Psi^n_{sym}$ as a sum,
this amounts simply to integrating $\Psi(x_1)$ over a  PF- definable subset $e_d$ of another affine curve $C'_{d}$, with $d$ a definable function
of the parameters.  Namely, each $c_i(u)$ is uniformly algebraic over $u$.  Thus the Zariski closure
of $\{(u,c_1(u),\cdots,c_k(u)):  u \in C_b \}$ is a curve $C''_b$.  
Let 
$C'''_b =  \{(x,u,z_1,\ldots,z_k):  (u,z_1,\ldots,z_k)  \in C''_b,  x^n + z_1x^{n-1} + \cdots + z_n = 0 \}$,
and $e_b = \{(x,u,z_1,\ldots,z_k) \in C'''_b:  z_i=c_i(u) \}$ (a PF-definable subset of $C'''_b$.)
Then $\int_{C_b} \Psi^n_{sym}(c_1,\ldots,c_k) = \int_{e_b} \Psi(x)$.  
If necessary, passing from $b$ to a parameter $d$ within $\acl(b)$,  break up $C'''_b$ into irreducible compnents.

 The case where $\psi_d$ is a finite set is directly definable via $\Psi^n_{sym}$,
see \ref{lrem} (iv). 
 Using definability of dimension (specifically of dimension zero),  we can similarly reduce   to integrating $\Psi(x_1)$ over
an absolutely irreducible curve $C_b$.  If $x_1$ is constant on $C_b$, with value $v$, the integral is
$\Psi(v) \mu(1_{\psi_d})$.   Otherwise, by the Weil bound, the answer is $0$.
\eprf

We can define, for any formula $\phi(x)$, the Fourier transform 
\[F(\phi):= \int_{x \in F^n} \Psi(x \cdot y) \phi(x) d\mu(x) \]
\begin{cor}
The Fourier transform of any definable real-valued relation on $\Ff_p^n$ is also definable, uniformly in $p$, and  uniformly in families of definable functions.   
\end{cor}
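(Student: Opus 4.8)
The plan is to deduce the corollary directly from the preceding proposition, which asserts that the pseudo-finite counting measure $\mu$ on affine space (or on any definable family of definable sets) is definable, uniformly in $p$ and in families. Concretely, given a definable real-valued relation $\phi(x)$ on $\Ff_p^n$, consider the two-variable relation $\theta(x,y) := \Psi(x \cdot y)\,\phi(x)$, where $x\cdot y = \sum_{i=1}^n x_iy_i$; since $\Psi$ is a basic relation, $x \cdot y$ is a term, and $\phi$ is definable, $\theta$ is a definable $\Cc$-valued relation. Then $F(\phi)(y) = \int_{x \in F^n}\theta(x,y)\,d\mu(x)$, and I would invoke the proposition applied to $V = \Aa^n$ (with the extra parameter variable $y$ playing the role of the $y$-variables in the proposition's statement) to conclude that $y \mapsto \int \theta(x,y)\,d\mu(x)$ is a definable map in the sense of continuous logic. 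The uniformity in $p$ and in families of definable functions is inherited verbatim from the ``moreover'' clause of the proposition.

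The only genuine point to check is that nothing goes wrong when $\phi$ (rather than a plain formula $\phi(x,y)$ with $y$ a block of field variables) is multiplied by the character factor: but $\Psi(x\cdot y)$ is itself obtained by composing the basic relation $\Psi$ with the polynomial $x \cdot y$, hence is a term in the language of $\Pfp$, so $\theta(x,y) = \Psi(x\cdot y)\phi(x)$ is again a formula of the same shape handled by the proposition, up to applying \remref{lrem}(vii) (closure under continuous functions, here just a product) to see the product of a basic formula and a definable relation is definable. One small bookkeeping remark: the measure $\mu$ here is the probability (normalized counting) measure on $\Ff_p^n$, so $\mu(\Aa^n) = 1$ and no further renormalization is needed; $F(\phi)$ is then genuinely the finite Fourier transform $F(\phi)(y) = p^{-n}\sum_{x \in \Ff_p^n}\Psi(x\cdot y)\phi(x)$ in each standard model, and definability of this expression uniformly in $p$ is exactly the content claimed.

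I do not anticipate any real obstacle: the corollary is a one-line consequence of the proposition once one notes that the integrand of the Fourier transform is a definable relation in $(x,y)$. The only thing that requires a sentence of justification is the interchange of the roles of ``auxiliary parameters'' $y$ in the proposition with the ``Fourier-dual variables'' $y$ in $F(\phi)$, and that $\phi$ real-valued forces nothing beyond what the proposition already covers (indeed the proposition is stated for arbitrary $\Cc$-valued $\phi$, so the real-valued hypothesis is only there to match the informal framing and is not used).

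\begin{proof}
Fix $\phi(x)$, a definable real-valued relation on $\Ff_p^n$. The term $x \cdot y := \sum_{i=1}^{n} x_i y_i$ together with the basic relation $\Psi$ gives a definable function $\Psi(x\cdot y)$, and by \remref{lrem}(vii) the product $\theta(x,y) := \Psi(x\cdot y)\,\phi(x)$ is again a definable relation in the variables $(x,y)$, $x,y \in F^n$. Applying the previous proposition to the variety $V = \Aa^n$ (so that $\mu = \mu_{\Aa^n}$ is the pseudo-finite probability measure on $F^n$), with the block $y$ of field variables treated as the parameters, we conclude that
\[
 y \ \longmapsto\ \int_{x \in F^n} \theta(x,y)\, d\mu(x) \ =\ \int_{x\in F^n}\Psi(x\cdot y)\,\phi(x)\, d\mu(x)\ =\ F(\phi)(y)
\]
is a definable map in the sense of continuous logic. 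Since $\mu$ is uniformly definable in $p$ and in definable families of definable sets, so is $F(\phi)$; in particular, in each standard model $F(\phi)(y) = p^{-n}\sum_{x\in \Ff_p^n}\Psi(x\cdot y)\phi(x)$, and this expression is approximated, uniformly in $p$ and uniformly in families of definable functions $\phi$, by basic formulas.
\end{proof}
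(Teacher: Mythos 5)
Your proof is correct and is exactly the argument the paper intends: the corollary is stated as an immediate consequence of the preceding proposition, with the integrand $\Psi(x\cdot y)\phi(x)$ being a definable relation in $(x,y)$ and the $y$-variables playing the role of the parameters in the proposition. The paper offers no further justification, so your one-line deduction (including the bookkeeping about normalization) matches its route.
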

All definable functions on $F^n$ are bounded, in particular
square-integrable; by the Plancherel theorem the Fourier transform of a  definable function is square-integrable with respect to the unnormalized counting measure, hence has countable support; by quantifier-elimination the support
of the Fourier transform consists of algebraic numbers, if the original function is $0$-definable.     

 In particular, the Fourier inversion formula does not survive the `leading order' continuous-logic ultraproduct that we are using.  It would be interesting to know if it holds using the 'next to leading order' measure of \ref{next}, or if a further degree of precision is needed.   Fourier inversion does hold for almost periodic functions
 (uniform limits of finite linear combinations of functions $\Psi(cx)$); I do not know if it holds for any others or if this is the precise domain.

\begin{prop}\label{images}  The images under $\Psi^{(n)}$ of PF-definable subsets $D$ of $F^n$ (in a sufficiently saturated model)  are finite unions of cosets of
subtori of $\T^n$.  Moreover, for fixed $D$, the pushforward measure $\Psi^{(n)}_* \mu_D$  is a finite, positive $\Qq$- linear combination of Haar measures on such cosets (of dimension equal to $\dim(D)$).  \end{prop}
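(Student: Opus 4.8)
The plan is to realize $F$ as an ultraproduct $\prod_{i}(\Ff_{q_i},\chi_i)/\mathcal U$ (legitimate, since $\mu_D$ and hence $\Psi^{(n)}_*\mu_D$ are definable by the preceding proposition), to split $D$ along its Zariski closure, and then to compute the Fourier coefficients on $\T^n$ of the pushed-forward measure and match them against those of a rational combination of Haar measures on cosets of subtori.

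\emph{Geometric reduction.} First I would set $d=\dim D$, let $W_1,\dots,W_r$ be the $d$-dimensional irreducible components of $\overline D$, let $D_k$ be the part of $D$ lying in the generic locus of $W_k$ (off every other component of $\overline D$), and $D':=D\setminus\bigcup_k D_k$; then the $D_k$ are PF-definable, $D=\bigsqcup_k D_k\sqcup D'$, each $D_k$ is Zariski-dense in $W_k$, and $\dim D'<d$. Since $D_k$ consists of Galois-fixed ($F$-rational) points and is Zariski-dense in $W_k$, the component $W_k$ must be absolutely irreducible: a Galois-fixed point on a geometrically reducible variety lies in the proper closed intersection of its geometric components. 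Lang--Weil and Chebotarev then give $|D_k(F_i)|\sim c_k q_i^{d}$, $|D'(F_i)|=O(q_i^{d-1})$ and $|D(F_i)|\sim c\,q_i^{d}$ with $c_k,c\in\Qq_{>0}$ (the \cite{cdm} measures), so $\mu_D=\sum_k\theta_k\mu_{D_k}$ with $\theta_k=c_k/c\in\Qq_{>0}$ and $\sum_k\theta_k=1$. The problem then reduces to showing, for $D$ PF-definable and Zariski-dense in an absolutely irreducible $V$ of dimension $d$, that $\Psi^{(n)}_*\mu_D$ is the Haar probability measure on a single coset of a subtorus; the statement about images then follows by induction on $\dim D$ applied to $D'$ (the base case $\dim D=0$ being trivial, $D$ then finite).

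\emph{Fourier computation and identification of the limit.} For the reduced case I would compute, for $m\in\Zz^n$, the Fourier coefficient $\widehat{\nu_D}(m):=\int_{\T^n}z^m\,d\nu_D(z)=\int_D\Psi(m\cdot x)\,d\mu_D(x)$, where $\nu_D:=\Psi^{(n)}_*\mu_D$. Let $\Lambda:=\{m\in\Zz^n:\ m\cdot x\ \text{is constant on}\ V\}$, and for $m\in\Lambda$ write $b_m\in F$ for that constant; then $m\mapsto b_m$ is $\Zz$-linear, $\Lambda$ is \emph{saturated} in $\Zz^n$ (in characteristic $0$, $km\in\Lambda$ with $k\neq0$ forces $m\in\Lambda$), and $\widehat{\nu_D}(m)=\Psi(b_m)$ for $m\in\Lambda$. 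For $m\notin\Lambda$ the form $m\cdot x$ is non-constant on $V$, and the claim is that $\widehat{\nu_D}(m)=0$; since $|D(F_i)|\sim c\,q_i^{d}$ with $c>0$, this comes down to the square-root cancellation estimate
\[
\sum_{x\in D(F_i)}\chi_i(m\cdot x)=O(q_i^{d-1/2}).
\]
Granting it, $\widehat{\nu_D}$ is the character $m\mapsto\Psi(b_m)$ on $\Lambda$ and vanishes off $\Lambda$; saturation of $\Lambda$ makes $\Zz^n/\Lambda$ free, so this character extends to $m\mapsto z_0^{m}$ for some $z_0\in\T^n$, and then $\widehat{\nu_D}$ agrees with the Fourier transform of the Haar probability measure on the coset $z_0 S$ of the subtorus $S:=\Lambda^{\perp}=\{z\in\T^n:\ z^m=1\ \forall m\in\Lambda\}$ (connected since $\Zz^n/\Lambda$ is torsion-free, of dimension $n-\rk\Lambda$ — which is $\dim D$ precisely when $\overline D$ lies in no smaller rational affine subspace). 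By uniqueness of Fourier coefficients $\nu_D$ equals that measure. Summing over $k$, $\Psi^{(n)}_*\mu_D=\sum_k\theta_k H_k$ with $H_k$ the Haar probability measure on the coset attached to $W_k$: a finite positive rational combination of Haar measures on cosets of subtori, supported on that finite union of cosets (together with the lower-dimensional cosets from $D'$).

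\emph{Main obstacle.} The one genuinely analytic step, which I expect to be the real difficulty, is the estimate $\sum_{x\in D(F_i)}\chi_i(m\cdot x)=O(q_i^{d-1/2})$ for an \emph{arbitrary} PF-definable $D$ with $m\cdot x$ non-constant on $\overline D$. I would either invoke Kowalski's extension of Deligne's estimates to definable sets \cite{kowalski}, or argue directly: by Ax's quantifier elimination $\mathbf 1_D$ equals, up to a lower-dimensional error set, a $\Zz$-linear combination of indicators $\mathbf 1[\mathrm{Frob}_x\in\mathcal C]$ of Frobenius conjugacy classes in the Galois group of a connected \'etale cover of an open $U\subseteq\Aa^n$, so the sum breaks into twisted character sums controlled by the Grothendieck--Lefschetz trace formula and Deligne's Weil~II weight bound, with the non-constancy of $m\cdot x$ ruling out a constant contribution in top degree. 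This is precisely the higher-dimensional form of the ``Weil bound'' built into the axioms of $\PFp$, already used for curves in the proof of the preceding proposition. The remaining ingredients — the geometric reduction, the harmonic analysis identifying the limit measure, and the rationality of the weights via the \cite{cdm} measure — are routine by comparison.
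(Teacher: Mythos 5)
Your overall strategy --- decompose $D$, compute the Fourier coefficients of the pushforward, and identify the limit as Haar measure on a coset of $\Lambda^\perp$ --- is sound and close in spirit to the paper's, but the two arguments differ in the decomposition and, more importantly, in where the cancellation comes from. The paper does not pass through the Zariski closure at all: it introduces the topology whose closed sets are finite unions of $\Qq$-affine subspaces (integer linear forms, arbitrary constants in $F$), writes the closure of $D$ there as $\bigcup_i H_i$, reduces each $D\cap H_i$ by an $SL_n(\Zz)$ change of coordinates to a $\tau$-dense subset of a coordinate subspace $F^k$, and then checks that all nontrivial Fourier coefficients vanish. Your decomposition into absolutely irreducible components plus Lang--Weil is finer and arguably cleaner for pinning down the individual cosets and rational weights, so it is a perfectly good substitute. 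The substantive point concerns your ``main obstacle'': the paper needs no appeal to Weil~II or to Kowalski's higher-dimensional estimates. The vanishing of $\int_D\Psi(m\cdot x)\,d\mu_D$ when $m\cdot x$ is non-constant on the top-dimensional components is already a consequence of the immediately preceding proposition (definability of $\mu_V$), whose proof reduces every such integral --- via quantifier elimination, Fubini and the $\Psi_{sym}$ terms --- to integrating $\Psi(x_1)$ over definable subsets of affine curves, where the one-variable Weil bound gives $0$ unless $x_1$ is constant there. Applying that machinery to $\Psi(m\cdot x)$, fibering over the value of the linear form, yields exactly your square-root cancellation with no analytic input beyond the curve case; so the step you flag as the real difficulty is already available. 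Finally, a caveat you half-noticed yourself: the coset attached to a component $W_k$ is $z_0\Lambda_k^{\perp}$ of dimension $n-\rk\Lambda_k$, which is the dimension of the smallest rational affine subspace containing $W_k$ and can strictly exceed $\dim D$ (e.g.\ $t\mapsto(t,t^2)$ pushes forward to Haar on all of $\T^2$); so the parenthetical ``of dimension equal to $\dim(D)$'' must be read as referring to the ambient rational affine subspaces $H_i$, not to $\dim D$ itself, and your hedged formulation is in fact the accurate one.
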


\prf Consider the topology $\tau$ on $F^n$, whose   closed sets are finite unions of $\Qq$-affine subspaces, cut out 
by  (inhomogeneous) $\Qq$-linear equations.   Let   $\union_{i=1}^m H_i$ be the closure of $D$ in this topology.  
Then $D_i=D \meet H_i$ is $\tau$-dense in $H_i$; and it suffices to prove the lemma for each $D_i$ separately
(for the measure theoretic statement, only the top-dimensional components $D_i$ play a role, the others having pseudofinite measure $0$.)   After an $SL_n(\Zz)$-change of variable, and translation, we may assume $H_i$ is a coordinate subspace, cut out by $x_{k+1}=\cdots=x_n = 0$;
and then we may work in $F^k$ instead.  
By the axioms of $\PFp$, $\Psi$ maps $D_i$ surjectively to $\Tt^k$.  Moreover for any nonzero $m \in \Zz^k$, 
the integral of $\Pi_{i=1}^k \Psi(x_i)^{m_i} = \Psi(\Sum_{i=1}^k m_i x_i)$ under the pseudo-finite measure evaluates to $0$;
hence the pushforward to $\Tt^m$ of the pseudo-finite measure on $D_i$ must be a   multiple $\alpha_i$ of the Haar measure on $\Tt^k$.  Since the total measure of each $D_i$ is rational, we have $\alpha_i \in \Qq^{>0}$.   
\eprf

  \propref{images}    is not directly useful to prove definability, since when $D=D_b$ varies in a definable family, the image tori can jump; notably when $D_b = D \meet H_b$ is a hyperplane section of some fixed definable set $D$,  $\Psi^{(n)}(D \meet H_b)$ has  lower dimension when 
$b$ is rational.   The counting measure is definable {\em despite} this translation.

\begin{cor}  \label{ccfin} Let $D \subset F^n$ be a PF-definable set, and let $h: D \to \Cc$ be a $\PFp$-definable map.  
Then  $h(D)$ has finitely many connected components.  Define $x E_h y$ if  $h(x),h(y)$ lie in the same connected component.  Then $E_h$ is a PF-definable equivalence relation with finitely many classes.\end{cor}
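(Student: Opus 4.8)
The plan is to use the quantifier-elimination theorem to bring $h$ into a form to which \propref{images} applies, and then to translate the resulting geometric description into the two assertions. By \propref{qe} and the closure properties of \remref{lrem} (notably (iv),(v),(vii)), it suffices to treat an $h$ which, after partitioning $D$ into finitely many PF-definable pieces, has the shape
\[ h(u) = \theta\Bigl(\textstyle\sum_{\alpha\in E^{(1)}_u}\Psi(\alpha),\ \dots,\ \sum_{\alpha\in E^{(r)}_u}\Psi(\alpha)\Bigr), \]
where $\theta:\Cc^r\to\Cc$ is continuous and each $E^{(i)}\subseteq D\times F$ is a PF-definable set whose projection to $D$ is finite. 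After a further finite partition one may assume the fibres $E^{(i)}_u$ have fixed sizes (counted with multiplicity), so there is a PF-definable finite cover $\widehat\pi:\widehat D\to D$ on which the elements of the fibres are named by PF-definable functions $\beta_1,\dots,\beta_M:\widehat D\to F$; then $h\circ\widehat\pi=\widehat\theta\circ\widehat\Phi$, where $\widehat\Phi:\widehat D\to\T^M$ is $z\mapsto(\Psi(\beta_1(z)),\dots,\Psi(\beta_M(z)))$ and $\widehat\theta:\T^M\to\Cc$ is continuous (it re-forms the $r$ sums and applies $\theta$).

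Now I would apply \propref{images} to the PF-definable set $\widehat D$: the image $\widehat\Phi(\widehat D)\subseteq\T^M$ is a finite union of cosets of subtori, each compact and connected, so $h(D)=\widehat\theta(\widehat\Phi(\widehat D))$ is a finite union $S_1\cup\dots\cup S_k$ of compact connected subsets of $\Cc$. In particular $h(D)$ is compact with finitely many connected components $T_1,\dots,T_L$, each a union of some of the $S_i$ and hence, being connected, a finite union of continuous images of tori. Being finitely many pairwise disjoint compacta, the $T_\ell$ are separated by some $\delta>0$; and since a connected compact set is joined up by chains of boundedly many steps of any fixed size, there is an $m_0$ (depending only on $h$) so that any two points of a common $T_\ell$ are joined inside $T_\ell$ by a chain of at most $m_0$ steps of length $<\delta/3$.

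For the statement on $E_h$, set
\[ c_m(x,y)=\inf\bigl\{\max_{0\le j<m}|h(w_j)-h(w_{j+1})|\,:\,w_0=x,\ w_m=y,\ w_1,\dots,w_{m-1}\in D\bigr\}, \]
a $\PFp$-definable function of $(x,y)$. If $h(x),h(y)$ lie in the same $T_\ell$ then $c_{m_0}(x,y)<\delta/3$, by lifting the short chain in $T_\ell$ to one in $D$ through $h$-preimages; if they lie in distinct components, every chain in $h(D)$ from $h(x)$ to $h(y)$ has a step jumping between $\delta$-separated $T_\ell$'s, so $c_m(x,y)\ge\delta$ for all $m$. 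Thus $c_{m_0}$ omits the interval $[\delta/3,\delta)$, so composing it with a continuous function $\Rr\to\{0,1\}$ equal to $1$ on $[0,\delta/3]$ and $0$ on $[\delta,\infty)$ yields a two-valued $\PFp$-formula whose $1$-set is exactly $E_h$. Hence $E_h$ is a discretely definable subset of $F\times F$, and by the conservativity statement \thmref{summary}(\ref{conservative}) it is PF-definable; its classes are $h^{-1}(T_1),\dots,h^{-1}(T_L)$, so there are finitely many.

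The step needing most care is the first reduction. The argument above is clean for $h$ of the displayed basic form (equivalently, for $h$ a basic formula), whereas \propref{qe} and \remref{lrem} only put a general $\PFp$-definable map into that form up to an arbitrarily small uniform error. Promoting the conclusion from the approximants to $h$ itself requires the data $M,L,\delta,m_0$ to be controlled uniformly along an approximating sequence, which in turn uses that the approximation is uniform over all $L_+$-structures --- in particular over the finite fields $\Ff_q^+$, where each approximant has finite image. I expect securing this uniformity, and thereby pinning down exactly what a $\PFp$-definable map means in the statement, to be the main obstacle.
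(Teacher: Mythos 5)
Your overall strategy for the first assertion (reduce $h$, on a finite cover, to $C\circ\Psi^{(M)}$ and apply \propref{images}) is the paper's; but your proof of the second assertion has a genuine circularity. To pass from ``$E_h$ is a two-valued $\PFp$-definable relation'' to ``$E_h$ is PF-definable'' you invoke the conservativity statement, \thmref{summary}(\ref{conservative}). In the paper that statement is itself a corollary of the present one: its entire proof is ``the characteristic function of $X$ is $\PFp$-definable, so $E_{1_X}$ is PF-definable by \corref{ccfin}.'' So you cannot use it here. The fix is to skip the chain argument altogether and read off the classes directly from the proof of \propref{images}: that proof produces a decomposition $D=\bigcup_i D_i$ with $D_i=D\meet S_i$, $S_i$ a $\Qq$-affine subspace of $F^n$ (so each $D_i$ is PF-definable with parameters), and $\Psi^{(n)}(D_i)$ a \emph{single coset of a subtorus}, hence connected. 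Since $h$ factors (on a finite cover) as a continuous function of $\Psi^{(n)}$, each $D_i$ lands in one connected component of $h(D)$, so every $E_h$-class is a finite union of the $D_i$ and PF-definability is immediate. Your chain-connectivity construction of $c_{m_0}$ is correct as far as it goes, but it only delivers $\PFp$-definability, which is not what is claimed and not enough to break the circle.

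On the reduction itself: your worry about promoting the conclusion from $\e$-approximants to $h$ is legitimate, but note that the paper handles it by the same one-line assertion you are trying to justify (``$h$ can be expressed, after replacing $D$ by a finite cover, as $C\circ\Psi$ with $C$ continuous''), so you are not missing anything the paper supplies. The cleaner way to organize it is to fix the decomposition $D=\bigcup_i D_i$ \emph{first} (it depends only on $D$, via its closure in the $\Qq$-affine topology, not on $h$ or on the approximating sequence), and then observe that for each approximant $h_k$ the image $h_k(D_i)$ is connected; a uniform limit of functions with connected image on a fixed set again has connected closure of image, which is all that is needed for both assertions. Finally, a small point in your cover construction: naming the fibre elements by functions $\beta_1,\dots,\beta_M$ on $\wh D$ requires the cover to dominate all the fibres $E^{(i)}_u$ simultaneously (e.g.\ a fibre-product or Galois-type cover), not just the incidence variety $\{(u,\alpha):\alpha\in E^{(i)}_u\}$; otherwise $h\circ\wh\pi$ is a push-forward along $\wh\pi$ rather than a composition $\wh\theta\circ\wh\Phi$.
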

\prf The proof of \propref{images} shows that   
 $\Psi^{(n)}(D) = \union_{i=1}^k C_i$ is a finite union  of cosets $C_i$ of subgroups of $\T^n$; moreover we can write $D= \union_{i=1}^k D_i$ with $D_i = D \meet S_i$ definable with   parameters, in fact $S_i$ a $\Qq$-affine subspace of $F^n$, so that $C_i = \Psi^{(n)}(D_i)$.  Now $h$ can be expressed, after replacing $D$ by a finite cover, as $C \circ \Psi$ with $C: \Cc^n \to \Cc$ continuous.  It follows that each $D_i$ is mapped into a single connected component of $h(D)$, and so the classes of $E_h$ are finite unions of the $D_i$.  
\eprf

\begin{cor}[$\PFp$ is conservative over $PF$ for definable sets.]   Let $(F,+,\cdot,\Psi) \models \PFp$ be sufficiently saturated,   let $X \subset F^n$,
and assume   both $X$ and $F^n \m X$ are $\bigwedge$-definable (with parameters) in $(F,+,\cdot,\Psi) $.    Then   $X$ is definable (with parameters) in $(F,+,\cdot)$.\end{cor}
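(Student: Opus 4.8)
The plan is to use saturation to replace the pair of $\bigwedge$-definitions of $X$ by a single $\{0,1\}$-valued formula in the full language $L_+$, and then to apply \corref{ccfin} to that formula on all of $\Aa^n$.

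First I would carry out the compactness step. Fix a parameter tuple $b$ over which both $X$ and $F^n\setminus X$ are $\bigwedge$-definable; without loss of generality write $X=\bigcap_i\{a:\theta_i(a,b)\le 0\}$ and $F^n\setminus X=\bigcap_j\{a:\eta_j(a,b)\le 0\}$ for real-valued $L_+$-formulas $\theta_i,\eta_j$. Since $X\cap(F^n\setminus X)=\emptyset$ and $F$ is sufficiently saturated, continuous-logic compactness yields a $\delta>0$ and finite sub-maxima $\Theta=\max_k\theta_{i_k}$, $H=\max_l\eta_{j_l}$ with $\max(\Theta(a,b),H(a,b))\ge\delta$ for every $a\in F^n$. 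Then $\Theta(\cdot,b)\le 0$ on $X$, whereas $\Theta(\cdot,b)\ge\delta$ off $X$ (since $a\notin X$ forces $H(a,b)\le 0$). Composing $\Theta(\cdot,b)$ with a continuous cutoff equal to $1$ on $(-\infty,\delta/3]$ and to $0$ on $[2\delta/3,\infty)$ produces an $L_+$-formula $h$ with values in $\{0,1\}$ and $h^{-1}(1)=X$; thus $h\colon F^n\to\Cc$ is a $\PFp$-definable map over $b$.

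Next I would apply \corref{ccfin} with the PF-definable set $D:=F^n$ (cut out by $x=x$) and this $h$. Since $h(D)\subseteq\{0,1\}$ is finite, \corref{ccfin} gives a PF-definable equivalence relation $E_h$ with finitely many classes, and these classes are the nonempty ones among $h^{-1}(0)=F^n\setminus X$ and $h^{-1}(1)=X$; equivalently, unwinding \propref{images} as in the proof of \corref{ccfin}, $F^n$ is a finite union of PF-definable sets $D_1,\dots,D_k$ on each of which $h$ is constant, so that $X$ is the union of those $D_i$ with $h\equiv 1$. Either way (and trivially if $X=\emptyset$), $X$ is definable in $(F,+,\cdot)$ with parameters, as claimed.

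The only step that asks for real care is the passage, in the first paragraph, from two $\bigwedge$-definitions to a single $\{0,1\}$-valued formula with a uniform gap across the topological boundary of $X$ — this is exactly where saturation enters. All of the genuinely geometric content — that $\Psi^{(n)}$ carries PF-definable sets to finite unions of cosets of subtori, which forces a $\PFp$-definable $\{0,1\}$-valued function to be constant along a PF-definable finite partition of its domain — is already packaged in \propref{images} and \corref{ccfin}, so no new obstacle remains.
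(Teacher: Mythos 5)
Your proof is correct and takes essentially the same route as the paper: the paper's own argument is precisely that under these hypotheses the characteristic function $1_X$ is a $\PFp$-definable map (by saturation), so that \corref{ccfin} makes $E_{1_X}$, and hence $X$, PF-definable. You have simply spelled out the compactness-and-cutoff step producing the $\{0,1\}$-valued formula, which the paper leaves implicit.
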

\prf The characteristic function of $X$ is definable in $(F,+,\cdot,\Psi)$ in these circumstances, 
 and so $E_{1_X}$ is PF-definable by  \lemref{ccfin}. \eprf
 
    \bigskip
    
  In view of this conservation result, one may wonder whether $\PFp$ is at all a proper expansion of $PF$;
  certainly some functions onto the interval $[0,1]$ become definable.  But by no means is this the case for $\Psi$.
   In continuous logic, definability would mean that given any two subsets $A \subset B \subset \T$
 of the unit circle   with 
 $A$ compact and $B$ open,   there exists a formula $\phi(x)$ in the language of rings such that    $\Psi(a) \in A$ implies $\phi(a)$,
 while $\phi(a)$ implies $a \in B$.    This is ruled out by the undecidability results of \secref{wrongturn}.  But let us
 also see it in a more geometric fashion.  Let $F$ be a pseudo-finite field (with nonstandard size $p^*$), and let
 $\e>0$ be standard.   {\em A  definable  set $D$ cannot contain an interval $I$ of length 
 $\geq \epsilon p^*$, 
 unless the complement $D':=F \m D$ is finite.}  To see this recall the {\em stabilizer} of \cite{pac}.  In the special case of 
 rank $1$, it can be defined as follows: let $q$ be some rank-one type of elements of $D'$.  Then
  $St_0(q)$ is the set of elements $a$ with $aq \meet q$ infinite;  
 $St_0(q)$ generates a definable group $S$ in two steps, and $S \m St_0(q) $ is finite;
 if $s_1,\ldots,s_k \in S$ are (algebraically) independent then $\meet (s_i + q) \neq \emptyset$.  
 Now $\Ff_p$ has no proper, nonzero
 definable subgroups, hence neither does $F$.  So $S=F$.  Now  one can easily find  independent 
 $s_1,\ldots,s_k \in S$ such that the union of the translates $s_i + I$ is all of $F$.   But then
 $\meet (s_i + D') = \emptyset$, a contradiction.   
  
 A slightly different argument that PF,$\PFp$ differ:  it is clear that the additive group of an ultraproduct of finite fields with additive character admits a $\bigwedge$-definable subgroup such that the quotient is connected in the logic topology,
 namely the kernel of $\Psi$.  But according to results in \cite{pac} we have $G^0 = G^{00}$ for any definable
 group over any base set, i.e. no nontrivial connected quotients exist.

    \end{section}
    
\begin{section}{Prime fields with  standard character}
  
 \label{prime}

In this section we will consider the class $P+$ of prime fields  with their standard character 
\[n \mod p \mapsto exp(2 \pi i p/n) \]
embodying the natural map $\Ff_p = \Zz/p \Zz  \subset \Rr/ p \Zz \cong \Tt$.  (Note that the last isomorphism is unique
if $\Rr/ p \Zz , \Tt$ are viewed as compact groups, up to inversion.)  

  We have found an explicit theory  admitting quantifier-elimination, and true  asymptotically in the family of finite fields with any nontrivial additive character.   It continues to hold, of course, in the smaller class $P+$.
  So every sentence is equivalent to  a quantifier-free one  (in the language with $\Psi_{sym}$); to axiomatize $P+$
amounts to finding the valid quantifier-free sentences, describing, 
 for each number field $L$,  the possibilities for $F \meet L $ and for $\Psi | (F \meet L)$,
where $(F,\Psi)$ is an ultraproduct of elements of $P+$.  

One  group of additional axioms, the axioms $\SP$ below relating $\Psi$ on $\Qq$
   to roots of unity in $F$, is certainly needed.   
   We conjecture that this gives a full axiomatization of the class, and show that this follows
   from a  number theoretic conjecture,  closely related to work of   Duke, Friedlander, Iwaniec, and Toth.  This is the counterpart  for fields with an additive character of the use made by Ax of the  Chebotarev density theorem.
   
 We start with a description of the axiom scheme $\SP$; it consists of axioms $\SP[1],\SP[2],\cdots$, where   the $n$'th axiom asserts:
   
$\SP[n]$   For some $k<n$ prime to $n$ we have:   \begin{itemize}
\item $\Psi({\frac{1}{n}}) = exp(2 \pi i   k/n)$;  and 
\item $[F \meet \Qq[\sqrt[n]{1}]]  = Fix(\alpha_k)$,
\end{itemize}  $\alpha_k$ being the automorphism   of $\Qq[\sqrt[n]{1}]$ acting on the $n$'th roots of $1$ by $k$'th power.

    For instance
for $n \leq 4$ we have  $\Psi(1)=1$, $\Psi(\frac{1}{2}) = -1$,  $\Psi(\frac{1}{3}) = e^{2 \pi i / 3} $ iff $F$ contains a primitive cube root
of $1$   and $4 \pi i / 3$ otherwise,  
$\Psi(\frac{1}{4}) =  i $  if $i \in F$ and $\Psi(\frac{1}{4}) =-i $ otherwise.    Note that it follows from any $\SP[n]$ that $\Psi(1)=1$, and hence $\Psi(\Zz) = \{1\}$, i.e. $\Psi$ induces a character of $\Qq/\Zz$.

\begin{rem} \label{silanguage}  
  The prime field  axioms are nicer to formulate in the language of difference fields, restricted to the fixed field;
this coincides with the language of pseuo-finite fields with the addition, for each $n$, of an algebraic imaginary constant naming (coherently) a generator $\si_n$ of the Galois group of the order $n$ extension.   
Now  both the character group and the Galois group have a distinguished generator.  In the standard models, the $\si_n$
is   interpreted as  the Frobenius automorphism $Fr$; in particular on the $n$'th roots of $1$ (with $n$ prime to $p$) the distinguished automorphism determines an integer $k(n)$,  namely the unique integer modulo $n$ such that $Fr(\omega^k) = \omega$ for each  $n$'th root $\omega$ of $1$.   In this language the $n$'th  axiom asserts:
  \ \
  \[\hbox{If }\si_n(\omega)=\omega^k  \ \ \hbox{ then }   \Psi({\frac{-k}{n}}) = exp(2 \pi i  /n), \]
relating the choice of generator of the  orientation module of $n$'th  roots of unity to the choice of generator of the order $n$
Galois group, via the additive character `to resolution $1/n$'.   \end{rem}

   The truth of $\SP$ in the prime fields $\Ff_p^+$  with standard character, asymptotically as $p \to \infty$,  is an agreeable exercise.   For instance   $\Ff_p$
contains all $n$'th roots of $1$  iff $p=1 \pmod n$ iff $-1/n$ is represented by $(p-1)/n$ in $\Ff_p$.
In this case we have  $\psi(-1/n) = exp(2\pi i /n \cdot (1-1/p))$, so the limit with large $p$ is 
indeed $exp(2\pi i /n)$.   More generaly if $p=k \pmod n$ then $\Psi_p(-k/n)$ approaches $exp(2\pi i /n)$ as $p \to \infty$, and the value
of $\Psi(1/n)$ follows.

\begin{conj}  \label{conj0} The characteristic zero 
 asymptotic theory of prime fields with their standard additive character is axiomatized by
$\PPp := \Pfp+ \SP$. \end{conj}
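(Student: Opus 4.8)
The plan is to show that $\PPp$ and the characteristic zero asymptotic theory of the class $P+$ of prime fields with their standard character $\Psi_p$ have the same completions, one inclusion being routine and the other conditional on an explicit equidistribution estimate. Since $\PPp$ extends $\Pfp$, which admits quantifier elimination relative to the algebraically bounded terms by \propref{qe}, every completion of $\PPp$ is pinned down by an \emph{absolute number datum}: a pair $(\sigma,\psi)$ with $\sigma\in\gal(\Qq^a/\Qq)$ and $\psi\colon Fix(\sigma)\to\Tt$ a homomorphism trivial on $\Zz$ and satisfying the root-of-unity constraints of $\SP$ (cf.\ \secref{completions}; the $\SP[n]$ are quantifier-free, so they just cut out a closed set of data). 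Dually, a nonprincipal ultrafilter $\cU$ on the primes produces such a datum $(\sigma_\cU,\psi_\cU)$ from $\bigl(\Ff_p,\Psi_p\bigr)_{\cU}$, and the asymptotic theory is read off from the set of data so obtained; thus \conjref{conj0} is equivalent to the statement that this set is exactly $\{(\sigma,\psi):(Fix(\sigma),\psi)\models\SP\}$. The inclusion ``$\subseteq$'' is easy: $\Pfp$ holds in every such ultraproduct by \lemref{axiomshold}, and each $\SP[n]$ is a quantifier-free assertion about $F\cap\Qq(\zeta_n)$ together with the values $\Psi(k/n)$, $k\mid n$, which holds in $\Ff_p^+$ up to an error $O(1/p)$ by the elementary computation recalled after the statement of $\SP$ (if $p\equiv k\pmod n$ then $Fr_p$ acts on the $n$-th roots of unity by $k$-th power, while $\Psi_p(-k/n)=\exp\!\bigl(2\pi i(1-1/p)/n\bigr)\to\exp(2\pi i/n)$). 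Hence $(\sigma_\cU,\psi_\cU)\models\PPp$.

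The content is the reverse inclusion: every datum $(\sigma,\psi)$ with $(Fix(\sigma),\psi)\models\SP$ is realized by some ultrafilter, and here I would follow the proof of \propref{standard} as far as it goes. By compactness and \propref{qe} it suffices, for each finite Galois $L/\Qq$ with group $G$, each $\sigma\in G$ with $K:=Fix(\sigma|_L)$, each tuple $r_1,\dots,r_b\in K$ with $1,r_1,\dots,r_b$ linearly independent over $\Qq$, and each $\epsilon>0$ and $u\in\Tt^b$, to exhibit infinitely many primes $p$ such that the $L$-Frobenius at $p$ (relative to a fixed place of $L$ over $p$) equals $\sigma$ --- in particular $p$ splits in $K$, so reduction mod $p$ realizes $K$ inside $\Ff_p$ and $g_i(a)\bmod p$ is the image of $r_i$, where $a$ is the appropriate root of the defining polynomial $f$ of $K$ and $g_i\in\Qq[X]$ represents $r_i$ --- and such that moreover $\bigl|\Psi_p(g_i(a))-u_i\bigr|<\epsilon$ for all $i$. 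The values of $\Psi$ at the $1/n$ are not ours to choose, but they become \emph{forced}, up to $O(1/p)$ and with exactly the values $\SP$ prescribes, once we also pin $p$ to a fixed residue class modulo a suitable $N$; this is consistent with $Fr_p|_L=\sigma$ precisely because $\sigma|_{\Qq(\zeta_N)}$ is already part of the datum and $\SP$-compatible, and the estimate below tolerates a congruence restriction on $p$.

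What remains is an equidistribution statement, and this is where the number theory enters. In \propref{standard} the freedom needed to reach any $u\in\Tt^b$ came from the multiplier of the character (\lemref{weyl+}, \lemref{weyl+pf}); with the \emph{standard} character no such multiplier is available, and the source of equidistribution must instead be the variation of the root $a=a(p)$ of $f$ modulo $p$. Since $1,r_1,\dots,r_b$ are $\Qq$-linearly independent in $K$, the reductions $g_1(a),\dots,g_b(a)$ together with $1$ satisfy, for large $p$, no prescribed $\Zz$-linear relation modulo $1$; so by Weyl's criterion the desired density for all $u,\epsilon$ follows once one knows that for every nonzero $m\in\Zz^b$,
\[
\sum_{\substack{p\le X\\ Fr_p|_L=\sigma}}\ \sum_{\substack{a\in\Ff_p\\ f(a)=0}} \exp\!\left(2\pi i\,\frac{m_1g_1(a)+\cdots+m_bg_b(a)}{p}\right)\ =\ o\!\left(\sum_{\substack{p\le X\\ Fr_p|_L=\sigma}}\#\{a\in\Ff_p: f(a)=0\}\right)
\]
as $X\to\infty$, $p$ further confined to a fixed residue class mod $N$. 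This is exactly the multidimensional, Frobenius-restricted generalization of the theorem of Duke, Friedlander and Iwaniec \cite{dfi} --- which settles the case $\deg f=2$ via the spectral theory of automorphic forms --- together with its known refinements. \textbf{The main obstacle will be exactly this estimate: for $\deg f\ge 3$ it is open, and it is the precise number-theoretic hypothesis on which \conjref{conj0} rests.} Granting it, the Weyl argument yields for each $X$ a prime $p\le X$ satisfying the required conditions --- the discreteness of the image of $\Psi_p$, which consists of $p$-th roots of unity, is harmless since $\epsilon$ is fixed while the mesh $1/p\to0$ --- hence infinitely many such $p$. The infinite sets of good primes attached to the various finite data form a filter base (a finer datum refines the coarser ones), and any ultrafilter $\cU$ extending it produces $(\sigma_\cU,\psi_\cU)=(\sigma,\psi)$; this completes the realization, and completeness of the resulting theory --- hence the assertion that $\PPp$ axiomatizes it --- follows once more from \propref{qe}.
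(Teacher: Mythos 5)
First, note that this statement is a \emph{conjecture} in the paper: the paper does not prove it, but rather (\propref{conj1}) proves it equivalent to the arithmetic statement $\club$, and (\propref{dfi++}) derives $\club$ from an extended Duke--Friedlander--Iwaniec equidistribution hypothesis, known only in the quadratic case. Your proposal is, correctly, a conditional reduction of the same kind rather than a proof, and you isolate the right obstacle: the model-theoretic part (completions pinned down by absolute-number data via \propref{qe}, the easy inclusion via \lemref{axiomshold} plus the asymptotic verification of $\SP$, the compactness and ultrafilter bookkeeping) goes through, and what remains is an equidistribution statement for $\Psi_p$ at the roots of $f$ modulo $p$, open for $\deg f\ge 3$. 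In that respect your route is essentially the paper's.

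The one genuine divergence is in how you guarantee that the ultraproduct's field of absolute numbers is not \emph{larger} than $Fix(\sigma)$. You build the condition $Fr_p|_L=\sigma$ --- a full Chebotarev condition in the possibly non-abelian $Gal(L/\Qq)$ --- into the prime-counting statement, which settles this at once but means the estimate you need is equidistribution of $(\Psi_p(g_i(a)))_i$ over primes restricted to a Frobenius \emph{class} of $L$. The paper's $\club$ asks for less: only a congruence condition $p\equiv k\bmod m$ (controlling Frobenius on the cyclotomic part, hence expressible by Dirichlet characters and absorbable into the Weyl sums as in \propref{dfi++}) together with the mere existence of a root of $f$ mod $p$; the fact that $K\meet L$ is then exactly $Fix(\sigma)$ is recovered model-theoretically by the Chebotarev field-crossing argument with an auxiliary prime $l$, in the passage from $\spade$ to the conjecture inside \propref{conj1}. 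Since a general Frobenius class in a non-abelian extension is not cut out by Dirichlet characters, your hypothesis is strictly stronger than $\club$ and is not derived in the paper from \conjref{dfi+}; to obtain the sharpest conditional statement you should weaken the Chebotarev requirement to the cyclotomic part and add the field-crossing step. As a conditional argument, however, what you wrote is sound, and you flag the open estimate honestly.
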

 
 Since we already have quantifier elimination, the conjecture must be fundamentally number-theoretic;
we will now attempt to bring this out.    
Let $\Qq^{ab}$ denote the field generated by all roots of unity, over $\Qq$.   
For any field $F$ containing $\Qq$, call a homomorphism $h: F \to \T$ {\em acceptable} if 
 for each $n \in \Nn, n \geq 1$,  $h(1/n)$ is a primitive $n$'th root of unity.  The condition is only on $h|\Qq$; it implies that $h$ vanishes on $\Zz$, and indeed  that the kernel of $h$ is precisely $\Zz$.      If $h$ is acceptable, there exists a unique automorphism $\theta_h$ of $\Qq^{ab}$
such that for any $n$ and any primitive $n$'th root $\om_n$ of $1$, for some $k$ prime to $n$, $\theta(\om_n) = \om_n^k$
where $h(-k/n) = exp (2 \pi i /n)$.    When $F=Fix(\si)$, where $\si \in Aut(L)$, $L$ a finite Galois extension of $\Qq$, 
we say $h,\si$ are {\em compatible} if $h$ is acceptable, and there exists an automorphism of $ L \Qq^{ab}$ extending
both $\si$ and $\theta_h$.   

In case $L \meet \Qq^{ab} = \Qq[\om]$ for some primitive $m$'th root of unity $\om$,
compatibility amounts to the equality $\theta_h(\om)=\si(\om)$.

\begin{prop}\label{conj1}  The following statements are equivalent.
\begin{itemize}
\item[$\bullet:$] \conjref{conj0}.

\item[$\spade:$] \ \ Let $L$ be a finite Galois extension of $\Qq$. . 
Let $\si \in Aut(L)$, and let
  $h: Fix(\si) \to \T$ be an acceptable  homomorphism, compatible with $\si$.  
 Then  $(Fix(\si),+,\cdot,h)$ embeds into 
 an ultraproduct of   enriched prime fields from $P+$.

\item[$\club:$]   
    Let $L$ be a finite Galois extension of $\Qq$.   Assume $L \meet \Qq^{ab} = L \meet \Qq[\om]$,
where $\om$ is a primitive $m$'th root of $1$.  Let   $\si \in Aut(L)$, $\si(\om)=\om^k$,  and let $f$ be an irreducible polynomial over $\Qq$ of degree $d$, with a root in $a \in L$ satisfying $\si(a)=a$.        Then for 
any  $\e>0$ and any $u_1,\ldots,u_{d-1} \in \T$,  there exist infinitely many primes $p \in \Nn$ with $p=k \mod m$, 
  and $b \in \Ff_p$, $f(b)=0$, with  $|\Psi_p(b^i) - u_i|< \e$, $i=1,\cdots,d-1$.  

\end{itemize}\end{prop}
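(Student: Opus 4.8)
The plan is to prove the two equivalences $\bullet\Leftrightarrow\spade$ and $\spade\Leftrightarrow\club$ separately; the first is essentially formal once one has the quantifier‑elimination of \propref{qe}, while the second is a translation between a model‑theoretic embedding statement and an elementary Diophantine one. \emph{($\bullet\Leftrightarrow\spade$.)} By \propref{qe} each completion of $\Pfp$ is pinned down by the structure on its field of absolute numbers, i.e. by a subfield $Fix(\si)\subseteq\Qq^a$ with procyclic absolute Galois group together with a homomorphism $\psi:Fix(\si)\to\T$; unwinding $\SP[n]$, and comparing with the reformulation of \remref{silanguage}, such a completion satisfies $\PPp=\Pfp+\SP$ exactly when $\psi$ is acceptable and $(\si,\psi)$ is compatible in the sense defined above. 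Since every $\Ff_p^+$ satisfies $\SP$ asymptotically (the computation carried out just before \conjref{conj0}), $\PPp$ is contained in the asymptotic theory of $P+$, and \conjref{conj0} asserts that the containment is an equality — equivalently, that every acceptable compatible completion lies in the closure of $\{Th(\Ff_p^+):p\}$, i.e. that the corresponding structure on $Fix(\si)$ embeds into an ultraproduct of members of $P+$. Compactness reduces this to finite Galois pieces $L$ (with an inductive‑limit argument, as in the treatment of completions, to pass back to the full statement), and what is left is precisely $\spade$.

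\emph{($\spade\Rightarrow\club$.)} Given the data of $\club$, I would choose $\theta\in\gal(\Qq^{ab}/\Qq)$ with $\theta(\om)=\om^k$ and a homomorphism $h:Fix(\si)\to\T$ whose restriction to $\Qq$ is the acceptable character with $\theta_h=\theta$ and which satisfies $h(a^i)=u_i$ for $i=1,\dots,d-1$; such an $h$ exists because $1,a,\dots,a^{d-1}$ are $\Qq$‑linearly independent, and it is compatible with $\si$ because the hypothesis $L\cap\Qq^{ab}=\Qq[\om]$ reduces compatibility to the equality $\theta_h(\om)=\si(\om)$. Applying $\spade$ yields an embedding $\iota:(Fix(\si),+,\cdot,h)\hookrightarrow\prod_i\Ff_{p_i}^+/\mathcal U$. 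Reading off coordinates, $\iota(a)=[(b_i)_i]$ with $f(b_i)=0$ in $\Ff_{p_i}$ for $\mathcal U$‑almost all $i$; $\Psi_{p_i}(b_i^j)\to u_j$ along $\mathcal U$ for each $j$; and, since $\Psi\circ\iota$ restricted to $\Qq$ is the acceptable character attached to $\theta$ while $\Psi_{p_i}(1/m)$ depends only on $p_i\bmod m$, one has $p_i\equiv k\pmod m$ for $\mathcal U$‑almost all $i$. The intersection of these $\mathcal U$‑large sets of indices is infinite (and the $p_i$ may be taken distinct, the ultraproduct being of characteristic zero), supplying infinitely many primes $p$ witnessing the conclusion of $\club$.

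\emph{($\club\Rightarrow\spade$.)} Given $\spade$‑data $(L,\si,h)$, I would first replace $L$ by $L\cdot\Qq[\om_m]$ and extend $\si$ (using compatibility of $(\si,h)$) so that $L\cap\Qq^{ab}=\Qq[\om_m]$ with $m$ divisible by any integer prescribed in advance; write $Fix(\si)=\Qq(a)$ with $a$ an algebraic integer of degree $d$. By compactness it then suffices to produce, for each $N\mid m$ and each $\e>0$, infinitely many primes $p$ and a ring homomorphism $\phi$ into $\Ff_p$ defined on a subring of $\Qq(a)$ containing $\tfrac1N\Zz[a]$ for which $\Psi_p(\phi(x))$ lies within $\e$ of $h(x)$ on the $\Zz$‑basis $\tfrac1N,\tfrac aN,\dots,\tfrac{a^{d-1}}N$ of $\tfrac1N\Zz[a]$; these values propagate in a suitably diagonalized ultraproduct over the pairs $(N,\e)$ to a $\Psi$‑preserving embedding of all of $\Qq(a)$. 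To obtain them, apply $\club$ to the minimal polynomial of $a/N$ over $\Qq$ with targets $h((a/N)^j)$: for infinitely many $p\equiv k\pmod m$ it yields a root $\tilde b$ of that polynomial in $\Ff_p$ with $\Psi_p(\tilde b^j)$ close to $h((a/N)^j)$ for $j=1,\dots,d-1$. Setting $\phi(a/N)=\tilde b$ gives $\phi(a^j/N)=N^{j-1}\tilde b^j$, so $\Psi_p(\phi(a^j/N))=\Psi_p(\tilde b^j)^{N^{j-1}}$ is close to $h((a/N)^j)^{N^{j-1}}=h(a^j/N)$ for $j\ge1$, while $\Psi_p(\phi(1/N))=\Psi_p(1/N)$ equals $h(1/N)$ on the nose, since $p\equiv k\pmod N$ and the asymptotic form of $\SP$ together with compatibility of $h$ identifies $\Psi_p(1/N)$ in this residue class with the root of unity $h(1/N)$.

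\emph{(Main obstacle.)} The model theory is routine once \propref{qe} is in hand; the real content is the bookkeeping in $\spade\Leftrightarrow\club$. The delicate point is that $\club$ controls $\Psi_p$ only along the powers of a single algebraic element, whereas realizing a prescribed character $h$ on $Fix(\si)$ forces control of $\Psi_p$ on a genuine $\Zz$‑basis of a finitely generated subring — which is why one must feed $\club$ the minimal polynomial of $a/N$ rather than of $a$, take $m$ divisible by $N$, and use acceptability of $h$ to pin the ``constant‑term'' value $\Psi_p(1/N)$ exactly. A secondary check, in $\spade\Rightarrow\club$, is that the congruence $p\equiv k\pmod m$ is genuinely forced by the embedding, which relies precisely on the dependence of $\Psi_p$ on $\Qq$ recorded in $\SP$.
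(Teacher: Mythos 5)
Your treatment of $\spade\Leftrightarrow\club$ is essentially the paper's argument and is sound: the direction $\spade\Rightarrow\club$ (the paper runs it from $\bullet$ directly, but the content is the same) correctly exploits the $\Qq$-linear independence of $1,a,\dots,a^{d-1}$ to prescribe $h$ freely, and your $\club\Rightarrow\spade$ differs from the paper only in feeding $\club$ the minimal polynomial of $a/N$ instead of that of $a$ and recovering $h(1/N)$ from the congruence $p\equiv k\pmod m$ — a workable variant (modulo the harmless slip that $\Psi_p(1/N)$ only \emph{approaches} the prescribed root of unity; exactness is recovered in the ultralimit).

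The genuine gap is in $\spade\Rightarrow\bullet$, which you dismiss as formal. After reducing by quantifier elimination and compactness to a finite Galois level $L$, what must be produced is an ultraproduct $K$ of members of $P+$ with $(K\meet L,\Psi)$ \emph{isomorphic} to $(Fix(\si)\meet L,h)$ — in particular $K\meet L$ must be no larger than $Fix(\si)$, since otherwise $Th(K)$ restricts at level $L$ to a different completion (a smaller decomposition group). But $\spade$ only furnishes an \emph{embedding} $\alpha:(Fix(\si),h)\to(K,\Psi)$, and nothing in it prevents $K\meet L$ from properly containing the image of $\alpha$. This is exactly the point the paper flags ("$\spade$ only speaks of an embedding, whereas we require the image of $\alpha$ to be all of $K\meet L$") and it is where the real work lies: one adjoins an auxiliary cyclotomic extension $\Qq[\om']$, $\om'$ a primitive $l$'th root of unity with $l$ chosen so that $d'\mid(l-1)$ and $\Qq[\om']$ is linearly disjoint from $L$, combines $\si$ with an order-$d'$ automorphism $\tau$ of $\Qq[\om']$, applies $\spade$ to the enlarged datum, and then uses the fact that $\SP$ determines $K\meet\Qq[\om']$ from the single value $\Psi(1/l)$ to force $K\meet L=Fix(\si)$ (the Chebotarev field crossing argument). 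Without this step — or some substitute for it — your chain of equivalences does not close.
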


\smallskip
Note that the statement $p=k \mod m$ in the conclusion of  $\club$    could equally well be replaced by 
$|\Psi_p(-k/m) - u_0| < \e$, where   $u_0 = exp(2 \pi i/m)$.

\prf   

We saw above that $\Ppp$ holds asymptotically in the class $P+$.  
 
Let us show $\club$ follows from \conjref{conj0}.  Let $L, \om,\si,f,a,k$ be as in $\club$.     Let $h: Fix(\si)\ \to \T$
be a  homomorphism vanishing on $\Zz$, with $h(a^i) = u_i$, and  
 $h(-k/m)  = exp(2 \pi i /m)$.    
Define an automorphism $\tau$  of $\Qq^{ab}$ by setting   $\tau(\mu) = \mu^{j}$, where $\mu$ is a primitive $n$'th root of unity,
and $h(-j/n) = exp(2 \pi i /n)$;  
note $\tau$  agrees with $\si$ on $\Qq[\om]$.  Since $L,\Qq^{ab}$ are linearly disjoint
over $\Qq[\om]$, one can find an automorphism  $\bar{\si}$ of $\Qq^{alg}$ extending both
 $\si $ and $\tau$.  Then $(Fix(\bar{\si}),h) \models \SP$. 
   Let $M$ be a model of $\Pfp$ extending $(Fix(\bar{\si}),+,\cdot,h)$.  By \conjref{conj0}, $M$ embeds elementarily into an ultraproduct of standard enriched prime fields $(\Ff_p,+,\cdot,\Psi_p)$.  Since in these fields
 $\Psi_p(-k/m)$ approaches $exp(2 \pi i /m)$, it follows that $p=k \mod m$ for almost all of these $p$, proving $\club$.

Now assume $\spade$.   We  will show that any model $M$ of $\PPp$ is elementarily equivalent to 
 an ultraproduct $K$ of  prime fields with their standard character.   By \propref{qe} 
it will suffice if $(K \meet \Qq^a,+,\cdot,\Psi) \cong (M \meet \Qq^a,+,\cdot,\Psi)$.   Thus we have to prove that for any $\si \in Aut(\Qq^a)$  and any compatible homomorphism $h: Fix(\si) \to T$, 
the structure $(Fix(\si),+,\cdot, h)$ is isomorphic to $K \meet \Qq^a$ for some ultraproduct $K=(K,+,\cdot,\Psi)$ of prime fields with standard character.    By compactness (of the class of ultraproducts of $P+$), it suffices to find such an isomorphism on a prescribed number field $L$, Galois over $\Qq$; or even a finitely generated subring $R=\Oo_L[1/N]$ of $L$;  i.e.  a field isomorphism
$\alpha: L \meet Fix(\si) \to L \meet K$, with $  \Psi \circ \alpha =  h$ holding on $R$.  
Enlarging $L$  if necessary, we may  assume it contains a primitive $N$'th root $\om$ of $1$.

We have reduced to a statement that is  nearly provided by 
$\spade$; except that   $\spade$ only speaks of an embedding, whereas we require the image of $\alpha$ to be all of $K \meet L$.  To bridge this gap we employ the
 Chebotarev field crossing argument; see  \cite{FJ} for a full account, including the existence of the auxiliary primes $l$
 that appear below.   
   
 Let $d'$ be the order of $\si$ in $Aut(L)$.    
 Let $l$ be a prime such that $d' | (l-1)$ and $\Qq[\om']$ is linearly disjoint from $L$ over $\Qq$, where
$\om'$ is a primitive $l$'th root of $1$.     Let $\tau$ be an automorphism of $\Qq[\om']$
of order $d'$.
Combine $\si$ and $\tau$ to an  automorphism $\bar{\si}$ of $L[\om']$; then $\bar{\si}$ still has order $d'$.
Let $h': \Qq \to \T$ be compatible with $\bar{\si}$  (i.e. with some  automorphism of $\Qq^a$  extending $\bar{\si}$.)  
 Since $\si(\om)=\bar{\si}(\om)$,  and the pairs $(h,\si)$ and $(h',\bar{\si})$ are both compatible, 
 we have  $h' | \Zz[1/N] = h | \Zz[1/N]$.    
Now $\Oo_L[1/N] \meet \Qq = \Zz[1/N]$.  Thus there exists a homomorphism $\Qq+\Oo_L[1/N] \to \T$ 
extending both $h'  \Zz[1/N]$  and $h | \Oo_L[1/N]$; extend it further to  $\bar{h}: Fix(\rho) \to \T$.
The pair $(\bar{h},\bar{\si})$ remains compatible  since this is tested on $\Qq$.

  By $\spade$  applied to $(Fix(\bar{\si}) ,\bar{h})$, there exists an ultraproduct $(K,\Psi)$ of 
  elements of $P+$ and an embedding $\alpha: (Fix(\bar{\si}),\bar{h} ) \to (K,\Psi)$.  Restricted the ring $R$,
  we have $\bar{h} |R = h | R$ and so  $\Psi \circ \alpha  |R = \bar{h}| R = h |R$.  
It remains only to show that $K \meet L$ is no bigger than $Fix(\si)$.
   If $K \meet L > Fix(\si)$, since $K$ contains $Fix(\bar{\si})$, it would follow that
$K \meet \Qq[\om'] > Fix(\tau)$.  However,    as both $K$ and $Fix(\bar{\si})$ satisfy $\SP$,
the fields $\Qq[\om'] \meet L', \Qq[\om'] \meet K$ are determined by $\bar{h}(1/l)=\Psi(1/l)$ and so must be equal.
  This contradiction shows that  $K \meet L= Fix(\si)$ as required.  

To close the circle we prove   $\spade$ from $\club$.  Let $L,\si,h$ be as in $\spade$.  Let $a$ be a primitive generator for $Fix(\si)/\Qq$, i.e. $Fix(\si)=\Qq[a]$, $d=[Fix(\si):\Qq]$.  Then $h$ is determined by $h(a),\cdots,h(a^{d-1})$ and $h | \Qq$.  In turn, $h| \Qq$ is determined by $h(1/N)$ for the various $N$.
Of course, $h(1/N), h(1/N')$ are determined by the single value of $h(1/(NN'))$.  
Using compactness, it suffices to prove that for any $m \in \Nn$, the field $Fix(\si)$  embeds into 
 an ultraproduct of enriched prime fields from $P+$, in such a way that $h(a^i) = \Psi(a^i)$ for $i=1,\ldots,d$, and
 $h(1/m) = \Psi(1/m)$.

We have $L \meet \Qq^{ab} \subset \Qq[\nu]$ for
some root of unity $\nu$; we may assume $m$ divides the order of $\nu$,
so   that $L[\nu]$ contains a primitive $m$'th root $\omega$ of $1$.
Now $L[\nu] \meet \Qq^{ab} = \Qq[\nu]$.   Extend $\si$ to $L[\nu]$, compatibly with $h$.

    Let $k$ be such that $h(-k/m)= exp(2\pi i / m)$.  By $\SP$ (i.e. acceptability), upon possibly replacing $\si$
    by another automorphism of $L$ with the same fixed field, we have 
     $\si(\omega)=\omega ^k$.  
Let $f$ be the minimal  polynomial of $a$ over $\Zz$.  
By $\club$ for $L[\nu],\si,a$,  there exist primes $p_i=k \mod m$ and $b_i \in \Ff_{p_i}$, $f(b_i)=0$, 
such that 
$|\Psi_p(b^i) -  h(a^i)|< 1/k$, $i=1,\cdots,d-1$.    Since $p_i=k \mod m$, we also have 
$\Psi(-k/m)=exp(2\pi i / m)$.  
Let $(K,+,\cdot,\Psi,b)$ be a nonprincipal ultraproduct of the $(\Ff_{p_i},+,\cdot,\Psi_{p_i})$, and embed $Fix(\si)=\Qq[a]$ in 
$K$ via $a \mapsto b$.    Then $h,\Psi$ agree on $-k/m$ and hence on $1/m$, and on each power
$a^i$.   This finishes the proof.
 
\eprf
 
\begin{rem}   In case $\club$ has counterexamples,   the characteristic zero 
 asymptotic theory of prime fields with their standard additive character is axiomatized by
$\PPp$   along with the list of assertions
\[ (\forall x)(f(x)=0 \to  \bigvee_{i=1}^{d-1}  |\Psi(x^i) - u_i| \geq \e  \vee |\Psi(\frac{-k}{ m}) - e^{2 \pi i/m}| \geq \e  ) \]  
 for each such counterexample. 
 \end{rem}

\ssec{Equidistribution}

It is natural to expect not only existence, but also equidistribution of the primes in $\club$.    We will see that a considerably simpler (but still deeply conjectural!) one-variable equidistribution statement implies it.   The required statement, \conjref{dfi+}, appears to be quite close
to  a hypothesis raised in Duke, Friedlander and Iwaniec \cite{dfi}, that we recall below as a conjecture.
 
\begin{conj} [DFI]  
\label{dfi}  
Let   $f \in \Zz[X]$ be an irreducible polynomial of degree $d \geq 2$.  Let $0 < \alpha < \beta \leq 1$.
Let $P$ be the set of primes. 
Consider  
\[S(x) := \{(p,\nu):  p \in P, p \leq x,  \nu \in \Zz,  0\leq \nu < p, \ f(\nu) \equiv 0 \pmod p \} \]
View $S(x)$ as a probability space, with the normalized counting measure.  Then the probability that
$ \alpha \leq \frac{\nu}{p} < \beta $ approaches $\beta-\alpha$ as $|S(x)| \to \infty$.
\end{conj}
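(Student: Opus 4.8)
The plan is to apply Weyl's equidistribution criterion to the probability space $S(x)$. Writing $e(t)=\exp(2\pi i t)$ and letting $\rho_f(p)$ be the number of roots of $f$ in $\Ff_p$, one has $|S(x)|=\sum_{p\le x}\rho_f(p)$, and by the prime ideal theorem (equivalently Chebotarev) applied to $K=\Qq[X]/(f(X))$ this is $\sim\pi(x)$: for all but finitely many $p$, $\rho_f(p)$ counts the degree-one prime ideals above $p$, and their counting function is asymptotic to $\mathrm{Li}(x)\sim\pi(x)$. Hence equidistribution of the fractional parts $\nu/p$ on $[0,1)$ is equivalent to
\[
E_h(x):=\sum_{p\le x}\ \sum_{\substack{0\le\nu<p\\ f(\nu)\equiv0\ (p)}} e\!\left(\frac{h\nu}{p}\right)=o(\pi(x))\qquad(x\to\infty)
\]
for each fixed nonzero integer $h$. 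The hypothesis $d\ge2$ enters precisely here: for $d=1$, $f=aX-b$, the root satisfies $\nu/p\to k/a$ along primes $p$ in a fixed residue class mod $a$, so the fractional parts accumulate at finitely many rationals and $E_h(x)\gg\pi(x)$ for suitable $h$; only for $d\ge2$ does one expect the necessary cancellation.

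Next I would make the inner sum algebraically explicit through the reduction theory of the order $\Zz[\theta]$, $\theta$ a root of $f$: a root $\nu$ of $f$ mod $p$ corresponds to a degree-one prime $\mathfrak p\mid p$ with $\theta\equiv\nu\pmod{\mathfrak p}$, and the pair $(\nu,p)$, $0\le\nu<p$, is thereby encoded by a reduced lattice datum in which $\nu/p$ is an explicit ratio. For $d=2$ this is the classical dictionary with binary quadratic forms: writing $f=aX^2+bX+c$ and $D=b^2-4ac$, one has $\nu\equiv\overline{2a}(-b\pm\mu)\pmod p$ with $\mu^2\equiv D\pmod p$, so $e(h\nu/p)$ carries the factor $e(h\overline{2a}\mu/p)$, and summing over the two square roots of $D$ produces a Sali\'e-type sum evaluating in closed form, essentially $\left(\tfrac{\cdot}{p}\right)e(\pm2h\overline{2a}\sqrt D/p)$ times a Gauss-sum factor. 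One is then reduced to cancellation in $\sum_{p\le x}\left(\tfrac{D}{p}\right)p^{-1/2}e\!\big(\ell\sqrt D/p\big)$, with $\sqrt D$ a fixed square root mod $p$; this is exactly the sum treated by Duke--Friedlander--Iwaniec by attaching it to a half-integral-weight theta series and invoking the Kuznetsov/Petersson formula together with the Deshouillers--Iwaniec bounds for sums of Kloosterman sums, and it delivers the case $d=2$ (and, via T\'oth's refinements, general quadratic $f$) outright.

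The main obstacle --- and the reason the statement is offered as a conjecture --- is $d\ge3$. There the square-root parametrization of the roots is replaced by higher radicals, with no analogue of the Gauss-sum evaluation and, more seriously, no known automorphic input to control the resulting sums over $p$; one must instead attack $E_h(x)$ through the lattice description directly, by a bilinear (Type I / Type II) decomposition of the sum over $p$ followed by large-sieve and Weil-type estimates for the complete exponential sums that appear. As intermediate goals I would first recover the averaged statement (equidistribution after averaging $f$ over a family, in the spirit of Hooley) and the conditional statement under GRH for the relevant Hecke $L$-functions, and only then pursue the bilinear route for a fixed $f$, where the genuinely open point is to beat the trivial bound in the Type II ranges without the spectral or algebraic-geometric structure that rescues $d=1,2$.
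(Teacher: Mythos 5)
This statement is presented in the paper as a \emph{conjecture} (Conjecture~\ref{dfi}), not a theorem: the paper offers no proof, explicitly attributes the quadratic case to Duke--Friedlander--Iwaniec \cite{dfi} and T\'oth \cite{toth}, and quotes the assessment that the general case is ``far away.'' So there is no proof in the paper against which to compare yours, and your text is, as you yourself make clear, a research programme rather than a proof. Within that framing your account is accurate: the reduction via Weyl's criterion to the sums $E_h(x)$, the normalization $|S(x)|\sim\pi(x)$ via the prime ideal theorem for $\Qq[X]/(f)$, the explanation of why $d\geq 2$ is needed (clustering of $\nu/p$ at the rationals $k/a$ for linear $f$), and the sketch of the $d=2$ mechanism (Sali\'e-sum evaluation, half-integral-weight/metaplectic spectral input, Deshouillers--Iwaniec bounds) all match the cited literature and the paper's own remarks, including \remref{quadratic-club} where the quadratic case is the only one invoked unconditionally.

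The genuine gap is therefore exactly the one you name: for $d\geq 3$ nothing in your proposal closes the argument. The bilinear (Type~I/Type~II) decomposition you propose is the standard opening move, but beating the trivial bound in the Type~II ranges for a fixed irreducible $f$ of degree $\geq 3$ is precisely the open problem; neither large-sieve inequalities nor Weil-type bounds for the complete sums are known to suffice there, and no automorphic substitute for the half-integral-weight input is available. One further caution: the paper does not actually need Conjecture~\ref{dfi} itself but a strengthening, Conjecture~\ref{dfi+}, in which the base field $L$ is allowed to be a proper Galois extension of $\Qq(a)$; \propref{dfi++} derives the multi-coordinate equidistribution ($\club$) from that extended form by applying it to the auxiliary elements $\sum_i n_i a^i$. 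If you intend your programme to feed into the paper's axiomatization question, it is the extended statement, not the one quoted here, that must ultimately be addressed.
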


The quadratic case was proved in \cite{dfi} and \cite{toth}; the general case was qualified as `far away' in \cite{dfi},
a view that to my outsider knowledge has not required much revision despite a quarter-century of progress.

We will  need what seems to be a slight extension.
Let $L$ be a  Galois extension of $\Qq$, of degree $\delta=[L:\Qq]$, with ring of integers $\Oo_L$.     Let $P'(L,x)$ denote the set of maximal ideals $\fp$ of $\Oo_L$, whose residue field $k(\fp)$ has cardinality at most $x$.  According to the prime ideal theorem,  $P'(L,x)$ has size asymptotic to $x/ \log(x)$.  
There are at most $\delta$ such ideals above a given rational prime $p$.  If  $k(\fp)$  is not a prime field, then 
$ p^2 \leq | k(\fp) | \leq x$;  
so $p \leq \sqrt{x}$; there are thus  at most $\delta \sqrt{x}$ such primes $\fp$, a negligeable number vis a vis $|P'(L,x)|$;
we will disregard them.    We will thus be interested in the set $P(L,x)$ of primes $\fp \in P'(L,x)$ with residue field $\Ff_p$
for some prime $p$.   We will always let $p$ denote the cardinality of the residue field of $\fp$.

Let $a$ be an element of $L$.   For all but finitely many   primes $\fp \in P(L,\infty)$, we have $a \in \Oo_\fp$, and so $\res_\fp (a) \in k(\fp)= \Ff_p$.  Let $\Psi_p$ be the standard additive character $n \mod p \mapsto exp(2 \pi i n/p )$. 
 Let $\mu_{L,x}$ be the normalized counting measure on $P(L,x)$,
and let  $\mu[L,a;x]$ be the pushforward under the map
\[   \fp  \mapsto   \Psi_p(\res_{\fp}(a))   \]

\begin{conj} \label{dfi+} [Extended DFI]  Let $L$ be a Galois extension of $\Qq$, and $a \in L \m \Qq$.  
Then the probability measures $\mu[L,a;x]$ converge weakly   to  the Haar measure on $\T$ as $x \to \infty$.    \end{conj}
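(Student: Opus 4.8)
\emph{Proof plan (conditional on \conjref{dfi}).} The strategy is to reduce \conjref{dfi+} to \conjref{dfi}, in a mild refinement allowing a Chebotarev restriction on the primes, via a Weyl step and a prime-splitting step. First, by Weyl's equidistribution criterion on the compact group $\T$ it is enough to show that for every nonzero integer $h$ the corresponding Fourier coefficient
\[ \frac{1}{|P(L,x)|}\sum_{\fp\in P(L,x)}\Psi_p(\res_{\fp}(a))^{\,h}\longrightarrow 0\qquad(x\to\infty). \]
Since $\Psi_p$ is a homomorphism and $\res_{\fp}$ a ring map, $\Psi_p(\res_{\fp}(a))^{h}=\Psi_p(\res_{\fp}(ha))$ with $ha\in L\setminus\Qq$, and $P(L,x)$ depends only on $L$; hence it suffices to treat $h=1$, i.e.\ to prove $\frac{1}{|P(L,x)|}\sum_{\fp\in P(L,x)}\Psi_p(\res_{\fp}(a))\to 0$ for every $a\in L\setminus\Qq$.

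Next I would rewrite this sum purely algebraically, using how rational primes split in the Galois extension $L/\Qq$. Put $K=\Qq(a)$, let $f\in\Zz[X]$ be the primitive minimal polynomial of $a$ — irreducible, of degree $d=[K:\Qq]\ge 2$ because $a\notin\Qq$ — and set $\delta=[L:\Qq]$ (so $d\mid\delta$). The $O(\delta\sqrt x)$ primes in $P'(L,x)\setminus P(L,x)$, together with the finitely many bad $\fp$ (dividing $\disc(f)$, the leading coefficient or a denominator of $a$, or a rational prime ramified in $L$), perturb both the numerator and $|P(L,x)|$ by $O(\sqrt x)$. For a surviving $\fp$ over a rational prime $p$: since $L/\Qq$ is Galois, $k(\fp)=\Ff_p$ forces $p$ to split completely in $L$; there are then exactly $\delta$ such $\fp$ over $p$, and restricting the corresponding $\delta$ residue maps to $\Oo_K$ realises each of the $d$ distinct roots of $f$ in $\Ff_p$ exactly $\delta/d$ times. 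Writing $\mathcal{P}_L$ for the set of rational primes splitting completely in $L$, this gives
\[ \sum_{\fp\in P(L,x)}\Psi_p(\res_{\fp}(a))=\frac{\delta}{d}\sum_{\substack{p\le x\\ p\in\mathcal{P}_L}}\ \sum_{\substack{\nu\in\Ff_p\\ f(\nu)=0}}\Psi_p(\nu)+O(\sqrt x), \]
and likewise $|P(L,x)|=\frac{\delta}{d}\,\#\{(p,\nu):p\le x,\ p\in\mathcal{P}_L,\ \nu\in\Ff_p,\ f(\nu)=0\}+O(\sqrt x)$, the count being $\gg_L x/\log x$ by the Chebotarev density theorem.

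Combining the two steps, \conjref{dfi+} for $(L,a)$ is equivalent to the assertion that the roots of $f$ modulo $p$, normalised by $p$, equidistribute in $[0,1)$ when $p$ is restricted to $\mathcal{P}_L$; equivalently, that the associated Weyl sums over $\{(p,\nu):p\in\mathcal{P}_L,\ f(\nu)=0\text{ in }\Ff_p\}$ are $o$ of their cardinality. When $\Qq(a)$ is itself Galois and $L=\Qq(a)$, having a root of $f$ modulo $p$ already forces complete splitting, so $\mathcal{P}_L$ imposes no restriction and this is \emph{exactly} \conjref{dfi} in its Weyl-sum form; nothing further is needed. In general $p\in\mathcal{P}_L$ is a genuine (possibly non-abelian) Chebotarev condition — expressible as ``a fixed defining polynomial of $L$ has a root modulo $p$'' — so what one needs is the natural ``DFI in Chebotarev families'': of the same expected depth as \conjref{dfi}, known unconditionally in the quadratic case \cite{toth}, and to be expected from any sufficiently robust approach to \conjref{dfi}. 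Thus the reduction above is entirely soft, and \textbf{the only real obstacle is \conjref{dfi} itself} — equidistribution of roots of polynomials of degree $\ge 3$ modulo a prime — together with its (anticipated-to-be-harmless) refinement to subfamilies of primes cut out by a Chebotarev condition.
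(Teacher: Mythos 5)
The first thing to say is that the statement you are addressing is a \emph{conjecture}: the paper offers no proof of \conjref{dfi+}, and explicitly records that even its special case \conjref{dfi} is known only for quadratic polynomials, the general case having been described in \cite{dfi} as ``far away''. So no argument can actually close here, and the question is only whether your reduction is sound and what it adds to what the paper does establish (namely: the Weyl reformulation \eqref{dfi+2}, the equivalence with \conjref{dfi} when $L=\Qq(a)$, and the truth of the conjecture for cyclotomic $L$ and quadratic $a$ in \remref{quadratic-club}).

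Your two steps are correct as far as they go. The Weyl step is exactly the paper's reformulation \eqref{dfi+2}, and the splitting computation --- a prime $\fp$ of $\Oo_L$ with $k(\fp)=\Ff_p$ forces $p$ to split completely in $L$, each such $p$ carries $\delta$ primes $\fp$, and each of the $d$ roots of $f$ in $\Ff_p$ is realised $\delta/d$ times --- is the same bookkeeping the paper uses to identify $\mu[L,a;x]$ with the DFI measure when $L=\Qq(a)$. The genuine gap is the endpoint: equidistribution of the roots of $f$ modulo $p$ restricted to the primes $p$ splitting completely in $L$ is, once $L$ strictly contains the Galois closure of $\Qq(a)$, a statement \emph{strictly stronger} than \conjref{dfi}; it does not follow from \conjref{dfi} by any formal manipulation, and for non-abelian splitting conditions there is no known mechanism for deducing it. (For abelian conditions the paper does have such a mechanism, used in \remref{quadratic-club} and \propref{dfi++}: a congruence condition on $p$ is absorbed into the additive character evaluated at a rational argument, so that the restricted Weyl sum becomes an unrestricted DFI-type sum for a shifted element $na+r$; this, together with \cite{dfi} and \cite{toth}, is how the cyclotomic quadratic case is actually proved.) So even ``conditionally on \conjref{dfi}'' your plan does not close: the Chebotarev-restricted refinement that you describe as anticipated to be harmless is an additional open hypothesis, and the unconditional content of your proposal is a correct reformulation of the conjecture rather than a proof of it. If you want unconditional statements, the honest targets are the ones the paper proves: the equivalence with \conjref{dfi} for $L=\Qq(a)$, and the quadratic cyclotomic case.
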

 
One can also state the conjecture in the style of \conjref{dfi}:    the fraction of $\fp \in P'(L,x)$ such that 
$ \alpha \leq \res_{\fp}(a)  < \beta $ approaches $\beta-\alpha$ as $x \to \infty$.

Using Weyl's criterion for equidistribution, as in \cite{dfi} (3,4), a third equivalent form is the truth for each $h \in \Zz,
h \neq 0$ of the estimate:  

\begin{equation}\label{dfi+2}    \lim_{x \to \infty}  \int_{\mu[L,a,x]} \Psi_p(\res_{\fp} (a))^h dp  = 0 \end{equation}

Note that \conjref{dfi+}  considers the value of $\Psi_p$ at a single field element $a$, whereas $\club$ requires
the values at $a,a^2,\cdots,a^{d-1}$.  
Despite this we will see in \propref{dfi++} that  \conjref{dfi+}  does imply \conjref{conj0}.

\begin{rem}  The case $L=\Qq(a)$ of \conjref{dfi+} is equivalent to  \conjref{dfi}.  
 \end{rem}
 
\prf  Assume  $L=\Qq(a)$.  Let $f$ be the minimal  polynomial of $a$ over $\Zz$.   Then for some $n$, $n \Zz[a] \subset \Oo_L \subset \frac{1}{n} \Zz[a]$.  Thus, putting asides the finitely many primes dividing $n$, a prime $\fp$ of $\Oo_L$  in $P'(L,x)$
corresponds to a prime $p \leq x$ of $\Zz$ along with a homomorphism $\Zz[a] \to \Ff_p$; this homomorphism is determined by the image of $a$, i.e. by choice in $\Ff_p$ of a root of $f$.  Thus $\mu[x]$ is also induced by the pairs $(p,b)$ with $p \leq x$ and $b \in \Ff_p$,  
$f(b)=0$.   This is what is counted in   \conjref{dfi}.     \eprf

Going a little further,   evidence that removing the condition  $L =\Qq(a)$ is not entirely unreasonable is given by the following remark, treating cyclotomic extensions of $\Qq(a)$.  We state it for quadratic $a$, but in  the same way we can show that if \conjref{dfi} holds true, then so does \conjref{dfi+}  for the case $L=\Qq[a,\nu]$ with $\nu$  a root of  unity.

\begin{rem}\label{quadratic-club}  \conjref{dfi+} is true for cyclotomic $L$ and quadratic $a \in L$ (using \cite{dfi}, \cite{toth}.)  \end{rem}

\prf  We may assume $L =\Qq[\mu]$, $\mu^N=1$; we have $\si \in Aut(L)$, $\si(a)=a$, $\si(\mu)=\mu^k$.  
Let $H = Aut(L/ \Qq[a] )$; so $H$ is an index-two subgroup of $Aut(L) = (\Zz/N \Zz)^*$.   So $k \in H$. We consider the map 
$\fp \mapsto (\Psi_p(a \mod \fp), p \mod N)$ on primes $\fp$ of $\Qq[a]$ with prime residue field $\Ff_p$.  Let $\mu[x]$
be the normalized counting measure on the set of primes $\fp$ of $\Qq[a]$ with norm $p \leq x$.   We have to show
that the image of this map becomes dense as $x \to \infty$; we will even prove equidistribution in $\T \times H$, 
using Weyl's method.      Thus given a character $z^n$ of $\Tt$ and a   character $\delta: H \to \T$,  
 not both trivial, we must show that 
\[\int   \Psi_p (a)^n \delta(p) d{\mu[x]}(\fp)   = o(1)\]
  In case $n=0$, this does not involve $\Psi_p$, and follows from the Chebotarev density theorem.  If $n \neq 0$,
  we have $\Psi_p(a)^n \delta(p) = \Psi_p( na + r) $ for an appropriate rational number $r$, with $\Psi_p(r) = \delta(p)$.
 The results now follows from the quadratic case of \conjref{dfi}, a theorem of \cite{dfi} and \cite{toth}.  
 \eprf

 \begin{rem}  In \propref{conj1}, we obtained \conjref{conj0} from $\club$ qualitatively.   A similar deduction could be made for the equidistribution versions; as is the case with the usual use of the Chebotarev field crossing argument.  The point is that
the behavoiur of $\si$ on $l$'th roots of $1$ for various $l=k \mod m$ is statistically independent among the various $l$, 
and hence also statistically independent from a fixed event that we would like to measure, at least at the limit.   From a stability point of view,  this is the standard fact that a Morley sequence is asymptotically independent from any given element,
specialized to measure algebras;  see   \cite{bybhu}.
This allows relativizing to such an $l$ without losing track of probabilities.\end{rem}
 
 \begin{prop}  \label{dfi++} Assume \conjref{dfi+}.    Let $L$ be a Galois extension of $\Qq$ and $a$  an element of $L$ with
 $d=[\Qq[a]:\Qq]  >1$. 
 
 $\mu^d[L,a;x]$ be the pushforward of $\mu_{L,x}$ under the map
\[   \fp  \mapsto   \Psi_p(\res_{\fp}(a),\res_{\fp}(a^2),\cdots,\res_{\fp}(a^{d-1}) )  \]
Then $\mu^d[L,a;x]$ weakly approaches the Haar measure on $\Tt^{d-1}$ as $x \to \infty$.  

For any $\si \in Aut(L)$ with $\si(a)=a$ and $\si(\om)=\om^k$, $\om$ an $m$'th root of $1$ in $L$,  $\club$ holds.
 \end{prop}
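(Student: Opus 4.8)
The plan is to handle the two assertions separately: the first is essentially a joint-equidistribution upgrade of \conjref{dfi+}, and the second layers a Chebotarev field-crossing argument on top of it.

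\textbf{Joint equidistribution.} By Weyl's criterion, the statement $\mu^d[L,a;x]\to$ (Haar measure on $\Tt^{d-1}$) is equivalent to
\[
  \int_{P(L,x)}\ \prod_{i=1}^{d-1}\Psi_p\bigl(\res_\fp(a^i)\bigr)^{h_i}\ d\mu_{L,x}(\fp)\ \longrightarrow\ 0
\]
for every nonzero $(h_1,\dots,h_{d-1})\in\Zz^{d-1}$. For all but finitely many $\fp\in P(L,\infty)$ the reduction $\res_\fp$ is a ring homomorphism on the relevant subring of $L$, and $\Psi_p$ is additive, so the integrand equals $\Psi_p(\res_\fp(c))$ with $c:=\sum_{i=1}^{d-1}h_i a^i\in\Qq[a]$. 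Since $[\Qq[a]:\Qq]=d$, the powers $1,a,\dots,a^{d-1}$ are $\Qq$-linearly independent, so $c\notin\Qq$, i.e. $c\in L\setminus\Qq$. Now \conjref{dfi+} applied to the pair $(L,c)$ says precisely that $\mu[L,c;x]$ converges weakly to Haar measure on $\Tt$, whence $\int_{P(L,x)}\Psi_p(\res_\fp(c))\,d\mu_{L,x}(\fp)=\int_{\Tt}z\,d\mu[L,c;x](z)\to\int_{\Tt}z\,d(\text{Haar})=0$. Deleting the finitely many bad $\fp$ does not affect the limit. This proves the first assertion.

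\textbf{Deduction of $\club$.} Fix $\e>0$ and $u_1,\dots,u_{d-1}\in\Tt$. The constraint in $\club$ not yet visible above is $p\equiv k\pmod m$; it cannot hold for primes splitting completely in $L$ (since $\om\in L$ forces $p\equiv1\pmod m$ for those), so when $k\neq1$ one cannot apply the first assertion to $L$ itself. The remedy is the device used in the proof of \remref{quadratic-club}: the value of $\Psi_p$ at the \emph{rational} number $-k/m$ satisfies $\Psi_p(-k/m)\to\exp(2\pi i s/m)$ with $s\equiv kp\inv\pmod m$ as $p\to\infty$, so for $p$ large the condition $p\equiv k\pmod m$ is exactly the condition that $\Psi_p(-k/m)$ lie within a fixed small neighbourhood of $\exp(2\pi i/m)$. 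One therefore runs the Weyl argument of the first part over the split primes of a Galois field $M\ni a$ chosen so that $M$ does not contain $\om$ and so that $M$-split primes realise every residue class mod $m$ reachable through $\si$ (the automorphism $\si$, via $\si(a)=a$ and $\si(\om)=\om^k$, witnesses the consistency of ``$f$ has a root mod $p$'' with ``$p\equiv k\pmod m$''). The joint map $\fp\mapsto(\Psi_p(\res_\fp a),\dots,\Psi_p(\res_\fp a^{d-1}),\Psi_p(-k/m))$ then equidistributes over $P(M,x)$: a Weyl character touching one of the first $d-1$ coordinates collapses by additivity to $\Psi_p(\res_\fp(c'))$ for $c'=\sum_i h_i a^i+h_0(-k/m)\in M\setminus\Qq$ — irrational for any $h_0$ whenever some $h_i\neq0$, again by linear independence of the powers of $a$ — and dies by \conjref{dfi+} for $(M,c')$; a Weyl character in the last coordinate only is purely cyclotomic and is controlled by the Chebotarev density theorem. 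Choosing $\fp\in P(M,x)$ with the first $d-1$ coordinates $\e/2$-close to $(u_1,\dots,u_{d-1})$ and the last coordinate close to $\exp(2\pi i/m)$, and setting $b:=\res_\fp(a)\in\Ff_p$, we get $f(b)=0$, $|\Psi_p(b^i)-u_i|<\e$, and $p\equiv k\pmod m$; equidistribution furnishes infinitely many such $\fp$, hence $\club$.

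\textbf{Main obstacle.} The first assertion is routine once Weyl's criterion is invoked. The real work is the construction of the auxiliary field $M$ in the second part: making $M$ Galois over $\Qq$, containing $a$, \emph{avoiding} $\om$, yet still possessing split primes in the target class $k\bmod m$, and checking that the congruence event and the $\Tt^{d-1}$-event become asymptotically independent. This is precisely the Chebotarev field-crossing construction used in \propref{conj1} (an auxiliary prime $l$ with $l\equiv1$ modulo the order of $\si$ and with $\Qq[\om_l]$ linearly disjoint from everything in sight), now lifted to the equidistribution level as the remark preceding this proposition anticipates; the lift is legitimate because the Frobenius data at the new prime $l$ is asymptotically independent of any fixed measurable event, which is the standard feature of that argument.
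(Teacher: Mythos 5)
Your first assertion is proved exactly as in the paper: Weyl's criterion on $\Tt^{d-1}$, collapse of a nontrivial character to $\Psi_p\bigl(\res_\fp(\sum_i h_i a^i)\bigr)$, irrationality of $\sum_i h_i a^i$ because $[\Qq[a]:\Qq]=d$, and an appeal to \conjref{dfi+} for that element. That half is fine and is the paper's argument.

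The deduction of $\club$, however, has a genuine gap. You correctly diagnose that completely split primes of a field containing $\om$ satisfy $p\equiv 1\pmod m$, and you correctly identify $\Psi_p(-k/m)$ as the device encoding the congruence. But the proposed fix --- running the Weyl argument over the completely split primes of a Galois field $M$ containing $a$ but not $\om$ --- cannot work. Any Galois $M$ containing $a$ contains the whole splitting field of $f$; for its completely split primes the Frobenius is trivial on $M$, hence on $M\cap\Qq[\om]$, so $p\bmod m$ is confined to the classes acting trivially on $M\cap\Qq[\om]$, and $k$ need not be among them (we only know that $\si$ fixes $a$, not that it is trivial on the splitting field of $f$). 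Worse, such an $M$ may not exist at all: take $a=2^{1/4}$ and $\si$ complex conjugation, so that $\om=\zeta_8$, $k\equiv 7\pmod 8$; every Galois field containing $a$ contains $\Qq[\zeta_8]$, and its split primes are all $\equiv 1\pmod 8$, whereas $\club$ is a nonvacuous assertion about primes $\equiv 7\pmod 8$ at which $X^4-2$ has a root. The root of the difficulty is that $\club$ concerns primes whose Frobenius is conjugate to an element fixing \emph{one} root of $f$ (like $\si$, which may move the other roots and $\om$), not primes splitting completely in a Galois closure; restricting to split primes discards exactly the primes you need whenever $\si$ is nontrivial on the Galois closure of $\Qq[a]$.

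The paper's route, following the proof of \remref{quadratic-club}, is to work instead with the degree-one primes of $\Qq[a]$ itself, i.e.\ the pairs $(p,b)$ with $f(b)\equiv 0\pmod p$, and to prove joint equidistribution of $\bigl(\Psi_p(b),\dots,\Psi_p(b^{d-1})\bigr)$ together with the class of the Frobenius in $H=\langle\si|_{\Qq[\om]}\rangle\leq(\Zz/m\Zz)^*$: the mixed Weyl characters are absorbed, exactly as you propose, into $\Psi_p\bigl(\res_\fp(\sum_i h_i a^i+r)\bigr)$ with $r\in\Qq$ and irrational argument, the purely cyclotomic characters are handled by Chebotarev, and the existence of $\si$ guarantees $k\in H$ is actually attained. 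If you replace ``split primes of $M$'' by ``degree-one primes of $\Qq[a]$'' (noting that this is the form of \conjref{dfi+}, equivalently \conjref{dfi}, really being invoked), your argument closes.
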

 
 \prf  Again we use the Weyl equidistribution criterion, this time on $\Tt^{d-1}$.   A character of $\Tt^{d-1}$ has the form
 $(z_1,\ldots,z_{d-1}) \mapsto z_1^{n_1} \cdot \ldots \cdot z_{d-1}^{n_{d-1}}$.  Thus we have to prove that for any
 $(n_1,\ldots,n_{d-1}) \in \Zz^{d-1} \m (0)$, as in \eqref{dfi+2}, we have

 \[    \lim_{x \to \infty}  \int_{\mu[L,a,x]} \Pi_{i=1}^{d-1} \Psi_p(\res_{\fp} (a^i))^{n_i}  = 0 \]
but  $\Pi_{i=1}^{d-1} \Psi_p(\res_{\fp} (a^i))^{n_i}  = \Psi_p \res_{\fp} \sum_{i=1}^{d-1} n_i a^i $;  
 as $a$ has degree $d$, we have  $\sum_{i=1}^{d-1} n_i a^i \notin \Qq$; hence the equation 
  follows from \eqref{dfi+2}, applied to the element  $\sum_{i=1}^{d-1} n_i a^i$ in place of $a$.

 If we are also given a primitive $m$'th root of unity $\om \in L$, and $\si \in Aut(L)$ 
 with $\si(\om)=\om^k$ and $\si(a)=a$,   let $H$ be the subgroup of $Aut(\Qq[\om]/\Qq) \cong (\Zz/m\Zz)^*$  generated by $\si$. 
 So the image of $k$ in $(\Zz/m\Zz)^*$ lies in $H$.  Exactly as in \lemref{quadratic-club}, we obtain 
 equidistribution in $\Tt^{d-1} \times H$.  But this clearly   implies  $\club$.
 \eprf

By way of illustration, consider  the case 
where  $f \in \Zz[X]$ is monic, irreducible, and $\Qq[X]/ f$ is Galois.  Then for (all but finitely many) primes $p$ the
number of roots of $f$ in $\Ff_p$ is $0$ or $d$.   On the face of it, \conjref{dfi} is compatible with a picture where for each prime $p$ where $f$ has roots at all, either all $d$ roots are represented by numbers in $\{0,\ldots,(p-1)/2\}$, or all $d$
roots are represented by numbers in $(p+1)/2,\cdots p-1$;
  as long as the number of primes choosing the first option is about equal to those choosing the second.   
  By the Proposition, \conjref{dfi+} (at least) implies  that this is not the case.  Applying $\Psi$ to the finite set of roots gives an element of   $\T^d / Sym(d)$; by varying $p$ among primes $\leq x$  and taking the limit $x \to  \infty$,
  these elements equidistribute on the subset   $\{(z_1,\dots,z_d)  \in \T^d: \Pi_{i=1}^d z_i = 1\}/Sym(d)$.  
   In particular for many (most) $p$ there will be roots in both lower and upper half-intervals.

We note also that 
\cite{foo} proves $\club$ for the case $L=\Qq[a]$, $k=0, m=1$, assuming the Bunyakovsky conjecture.

    \end{section}
    
\begin{section}{Some open-ended remarks}
      
 
\ssec{p-adic additive character}  The map $\Zz[1/p]/ \Zz \to  \T$, $a \mapsto exp((2 \pi i)a)$, induces a homomorphism
$\Psi_p: \Qq_p \to  \T$.  
Construed in discrete first-order logic, the theory of $(\Qq_p,\Psi_p)$ is  undecidable for reasons
similar to \propref{1}; pulling back appropriate arcs in $T$, and rescaling,
one can interpret  long intervals $[1,\cdots,a]$ in $\Zz/ p^n \Zz$.  
Likewise, the asymptotic theory (over all $\Qq_p$) is undecidable in discrete logic.  However it is natural to expect that the continuous-logic presentation will be decidable, and with definable integration with respect to the p-adic measure, both for a single $\Qq_p$ and asymptotically.   It is relevant that integration with an additive character reduces motivically to 
exponential sums over the finite residue field (Cluckers-Loeser, Hrushovski-Kazhdan).  Developing this theory would be interesting.

 \ssec{Transformal geometry}   

  The   theory of fields with an automorphism $\si$ has a model companion ACFA; see e.g. \cite{ch}.   The fixed field of $\si$ is denoted $k$.  
 The theory of pseudo-finite fields is precisely the theory of $k$; in this sense PF is contained
 in ACFA.  
The Frobenius difference field $K_p=(\Ff_p^a,\si_p)$ with $\si_p(x)=x^p$ is not a model of ACFA, but any nonprincipal ultraproduct 
 $K$ of such difference fields is  (\cite{frob}, \cite{varshavsky}).    Over any difference field, there is a notion of   transformal dimension; it can be defined using transformal transcendence degree.   Over 
  $K$  one can also characterize  the 
 transformal dimension of a
definable set  $D=\{x: \phi(x)\}$ as the unique $n$ such that $D_p:=\phi(K_p)$ has dimension 
$n$ for almost all $p$.  Note that $D_p$ is a constructible set, since $\si_p$ is algebraic.  

Any variety over the fixed field $k$ has transformal dimension $0$; thus the world
of varieties over finite fields is represented within the difference varieties of transformal 
dimension $0$.  However, the latter category is considerably bigger.  For one example, ultraproducts of Suzuki groups, defined using a square root of Frobenius, live therein.

When $D$ has transformal dimension $0$, a finer dimension called the total dimension can be defined,
and one has $|D_p| = O(p^m)$ iff $D$ has total dimension $\leq m$.  
One can thus assign a  measure $\mu_0$ to any  definable set of transformal dimension zero, namely the limit
along the ultrafilter of $|D_p| / p^m$.   This measure depends only on the theory of $K$ (along with any parameters used in $\phi$), and not on the specific presentation as an ultraproduct; it 
 has the same definability properties as shown in \cite{cdm} for pseudo-finite fields.  

An example of a difference variety of transformal dimension $1$ is given by the transformal curve 
$D_f: \ \si(y) - y = f(x)$, where $x$ varies over an algebraic curve $C$, and $f$ is a regular 
(algebraic or transformal) function on $C$.  The specializations $D_{f,p}$ are algebraic curves;    their smooth completions, were  used by Weil (\cite{weil}) to bridge the gap between varieties and exponential sums.   In a different way, not requiring smooth completions, the $D_{f,p}$   were used by  Grothendieck and Deligne to  express additive characters  in terms of $l$-adic monodromy.  Katz  raised the question of a uniform treatment 
of such situations; Kazhdan and Kowalski suggested specifically that model theory may be
useful; the model theory of difference geometry seems to be a very natural framework.  

A survey of the model theory of ACFA, with relevant open questions, is planned.  It is difficult to include a summary here of reasonable length, but some minimal  observations seem to be called for.  
 
 \ssec{ACFA with additive character on the fixed field}  \label{acfa+}
 
Consider a characteristic $0$ model $K \models ACFA$, with constant field $k$, and expand $k$ to a model of $\PFp$, in the language considered above.  \footnote{In the same language, we can extend the additive character to $k^{a}$.
Namely, on a finite extension $L$ of $k$, define  $\psi(x) = \Psi (\frac{1}{[L:k]} tr_{L/k} x)$.  We will not use this extension at present.}

  This simplest expansion requires no additional prepatory work:  by the stable embeddedness of $k$ in $K$, or directly from the nature of the quantifier-elimination of $K$, the new theory - let us temporarily name it 
$ACFA_+$ - admits quantifier-elimination.   Nevertheless, it presents already some   aspects worth noting.

(i)   In any continuous-logic theory, and any sort $S$ of that theory, one can define a compact analogue of the absolute
Galois group along the lines of \cite{kim-pillay}.  Namely, consider all $\bigwedge$-definable equivalence relations $E$ on $S$
with a bounded number of classes, independently of the model.   There exists a smallest such $E$, called $E_{KP}$ (recall we do not 
employ the convention of allowing arbitrary parameters.)  If $M$ is a saturated model, the set of $E$-classes carries
a natural compact topology, the logic topology, and the permutations induced from $Aut(M)$ become a compact topological group.   This group coincides with Lascar's $G_c$, the compact part of the general Lascar group \cite{lascar}; in simple theories, it is the full Lascar group.  We will call it the Kim-Pillay group.  
 
Let $\CG$ be the Kim-Pillay  $ACFA_+$.   
By contrast with ACFA (see  \remref{td},)  for $ACFA_+$, this compact   group has a nontrivial connected component
of the identity, $\CG^0$.   
If $B$ is any $0$-definable torsor of the additive group $(k,+)$, one can define an equivalence relation $E$ by:  $xEy$ iff $\Psi(x-y)=0$.  Then $B/E$ is a $T$-torsor.
In case $B$ determines a complete type over $\emptyset$, the automorphism group of $B/E$ is $T$.  For instance, if $a$ is any $0$-definable element of $K$, then 
$B_a=\{y: y-y^\si = a\}$ is such a torsor $T_a$.  In fact $\CG^0$
admits a homomorphism   into $\T^m$ for each algebraic element of $K$ with $m$ conjugates.  Of course, the image of $\CG^0$ in $\Pi_{a \in acl(0)} Aut(T_a) $ is not all of $\T^{\acl(0)}$, but reflects the additive relations among the elements $a$.  

(ii)  Conversely,   I believe it can be shown that $\CG^0$ is commutative, and may be 
precisely the above image.   Let $F$ be a   difference field, relatively algebraically closed in $K$, and let   $F'$  
be the base  structure  consisting of $F$ and $B/E$ as above, for all $\acl(F)$-definable $k$-torsors $B$.    The key is to prove   $3$-amalgamation over $F$; we are given, symbolically, the $2$-types of each pair from $a,b,c$ over $F'$, compatible on the $1$-types and  with each $2$-type independent over $F$, and must find an independent $3$-type extending them.  
As in \ref{amalgamation}, the purely algebraic amalgam is known to exist;
and to define $\Psi$, 
it suffices to show that the known values of $\Psi$ on $k(Fab),k(Fac), k(Fbc)$ where $Fab$ denotes the relative algebraic closure of $F(a,b)$, are compatible with any
 additive relations among them.     
 If $X$ is an  $F^{a}$-definable $k$-torsor,
 and $x \in X(Fa),y \in X(Fb), z \in X(Fc)$, then $x-y,x-z,y-z$ are such a triple of elements whose sum is zero.  
By methods similar to \cite{ad1}, it should be possible to show that all additive relations
on elements of $k(Fab),k(Fac), k(Fbc)$ are generated in this way.   
 These are taken into account in advance in the $1$-types, since  $\Psi$ can be defined on the torsor
$X$, with values in a torsor of $T$.  
 (We may replace $a$ by the $k$-internal part of $a$ over $F'$.  In case $a,b,c$ lie in the same principal homogenous 
space $X$ for a $k$-definable  group $G$, we have $F(k(Fab)) =F(d)$ for $d$ a generic of $G$, and likewise $e$ for $bc$ and
$f$ for $ac$; if any unexpected additive relation holds, one can find $d' \in F(d)$, $e' \in F(e)$, $f' \in F(f)$ with $d'+e'+f'=0$;
then a standard argument shows yields a homomorphism from $G$ to the additive group   $G_a$, say with kernel $N$; and the relation is already accounted for by the images of $a,b,c$ in $X/N$.  In general,  $a$ can be taken to lie in a translation variety in the
 sense of \cite{ad1}, and the situation requires some further analysis.)


(iii)  Let $f$ be a regular function on a curve $C$.  We can also view $C$ as a transformal curve (as such, it is denoted $C[\si]$ in \cite{frob}.)   The transformal curve $D$ defined by $\si(y)-y = f(x)$ is a transformally \'etale  cover of $C$ (smooth and zero-dimensional).  Define $D_\Psi$ as the quotient of
  $D \times \T$ by the identification of $ (d,t) $ with $ (d+a,t\Psi(a))$; we obtain a circle bundle; if instead we
take the quotient of $D \times \Cc$ by the same relation, we find
an archimedean analogue of an $l$-adic local system over $C$.  
The structural automorphism 
$\si $ lifts, given an element $a$ of $C(k)$, to translation by $\Psi(f(a))$ on $D_\Psi$.

$l$-adic local systems are a central object of Grothendieck's theory leading towards the Weil conjectures; part of the 
reason for taking $l$-adic coefficients is their approximability by finite coefficients, where Grothendieck proved constructibility
theorems for cohomology.  Deligne then had to devise methods to move over to archimedean fields, first in his solution 
of the Weil conjectures, but again in the case of his equidistribution theorem.   As explained in \cite{kowalski-aspects}, 
Deligne obtains equidistribution in terms of the natural measure on conjugacy classes in compact Lie groups by what feels like a 
 Robinson-style model-theoretic transfer; the compact groups themselves do not  really make a geometric appearance.  
 
Could one dream of a direct  archimedean analogue of  Grothendieck's   $l$-adic local systems, with his
constructibility   theorems replaced by quantifier-elimination results?  The fundamental group of a variety $V$ is a quotient
of the Shelah-Galois group of $ACF_{F(V)}$:  it is the limit of $Aut(F(V')/F(V))$, where $V'/V$ is finite and unramified.  When one moves to simple theories, such as $ACFA_+$, the Kim-Pillay Galois group has quotients that may be viewed as a generalized fundamental group, no longer totally disconnected.  
In  view  of the probable abelian nature of the connected parts for $ACFA_+$ noted in (ii), and other reasons,
 it seems clear that $ACFA_+$ is not the full answer, and perhaps one needs to go beyond the Kim-Pillay part of Galois.  In any case the possibility is tantalizing.
   
  \ssec{Difference geometry in transformal dimension one}      

   Let us also  take a quick look at how Weil's ideas in \cite{weil} may generalize.  
A notion of smooth transformal varieties exists; it may be defined in elementary terms using the usual Jacobian criterion
applied to difference polynomials in place of polynomials, 
where differentiation  treats any monomial $X^\si$ as a constant.  There is also a notion
of transformal blowing-up, \cite{frob}.  It is plausible that the projective completion of the curves $D_{f,\si}$ can be made
smooth upon blowing up, at least for ordinary polynomials $f$.   (The existence of a smooth difference curve
with a given function field has not been investigated, but carries its own interest.) 
  Assuming   a smooth projective model $E=E_{f,\si}$ exists,
a moving lemma on $E^2$ can be formulated, but has only been proved   for the transformalization of smooth projective algebraic varieties; it plausibly follows from a transposition to difference geometry of the classical `synthetic geometry' proof in \cite{hoyt}.   
   One can now define an intersection product on $E \times E$.
    The coefficient ring is a `motivic' ring constructed out of zero-dimensional difference varieties, with a dynamic `preservation of number' principle built in.     
     Whereas the Grothendieck ring of  zero-dimensional schemes 
has roughly the complexity of the natural numbers (the number of points with multiplicities), zero-dimensional difference schemes have at least  the
complexity of zeta functions; in particular one can apply Frobenius specialization for almost all $p$, to obtain a sequence of ordinary numbers.  Using \cite{frob}, this coefficient ring  admits
 a natural homomorphism into the field $\Rr(\e)$, and so one can work 
 with these coefficients when interested in `next to highest order' estimates (see below).    A Hodge index inequality is valid, though a purely transformal proof is not at present known to me; it follows using the main theorem of \cite{frob} from the case of ordinary algebraic varieties; presumably a direct proof is also possible.   A Weil-style trace can be defined for correspondences on $E$ using  the intersection product with the diagonal; and exponential sums can be related
 to the trace of the structural automorphism $\si$.

%

\bigskip

 \ssec{Next-to-leading-order measure}  \label{next}
 
 Consider  a definable family $(D_a: a \in P)$ of definable sets over a pseudo-finite field $F$.
 We take $F$ with the induced structure as a fixed field of a model of ACFA. 
 (It is very likely that the statements below are true   generally for definable sets of
 transformal dimension zero over $K \models ACFA$.)  
 
 Using Weil's Riemann hypothesis for curves, Will Johnson has shown \cite{johnson} that
 counting modulo a prime $l$ is definable.  Equivalently,  the function $a \mapsto |D_a|$
 from $P$ into $\Zz$, if viewed as a function into the compact set $\Zz_l$, the $l$-adic completion of 
 $\Zz$, is definable in the sense of continuous logic.  \footnote{For $l \neq p$ also follows, though  more elaborately,  from Grothendieck's $l$-adic theory, likewise dependent on Weil's results; see \cite{SGA4.5}     Theorems 4.4.10, and 7.1.1; see also 
 p. 31 for the explanation of the notion of orientation, corresponding to the
richer-than-pure field language we take here.  (See also \S 6.2 of \cite{johnson}.)}

 For an archimedean analogue, it is necessary to renormalize  since the image of $\Zz$ in $\Rr$
 is not relatively compact.  We defined $\mu_0$ by renormalizing by $p^{-\dim}$.  
 The definability of $\mu_0$ implies in particular
 that the relation $\mu_0(X)=\mu_0(X')$ is definable on definable families.  
 
   If we consider the formal expression
 $[X]-[X']$, it can be viewed as a function whose $\mu_0$-integral is $0$.  (If $X,X'$ are subsets of some 
 ambient definable set $D$, we can represent $[X]-[X']$ by a function on $D$, the difference of characteristic functions $1_X - 1_{X'}$. )  
 We can now go one step further, and consider the counting measure $\mu_1$ normalized so that 
$\int (1_X - 1_{X'}) d \mu_1$
is finite and (in general) nonzero; namely the ultralimit of  
\[ q^{\frac{1}{2}-\dim(X)}( |X| -|X'|)\]

  Of course, it can be efficient to   use a single invariant combining $\mu_0$ and $\mu_1$; simply let 
  $\mu(D) = \mu_0(D) + \e \mu_1(D)$.  This takes values in the ordered field $\Rr(\e)$, with $0< \e < 1/n$ for each $n$.  (So $\e^2$ `is' $\frac{1}{p}$.)   But here we will consider them separately.
 
Now $\mu_0$ is automatically definable, with discreteness properties.       $\mu_1$ is not discrete, even in pseudo-finite fields, 
and  cannot be expected to be definable in the pseudo-finite field itself.
  But  in continuous logic, the measure $\mu_1$ can be added to the structure; note that
  $\mu_1$ is bounded in bounded families.    
     One would like to know if $\mu_1$ is definable in a tame geometric expansion of the theory in continuous logic; to begin with, as an expansion of $PF$.  
     Definability of $\mu$ on   families of curves (one-dimensional integration)
 would imply definability over all varieties.    
  
     Here is a    precise question:  let  $(F,\mu_1)$ be an ultraproduct of finite fields with the $p^{-1/2}$-normalized counting measure.  Is it true that 
 $Th(F,\mu_1)$ is simple  as a continuous logic structure, and every definable subset of $F^n$ is     definable over the pseudo-finite field $F$ alone?

 \end{section}

\end{document}